%
\documentclass[10pt]{amsart}

\usepackage{mathtools,multicol}
\usepackage{kpfonts}
\usepackage{mathdots,enumerate}
\usepackage{multirow}
\usepackage{amssymb,amsmath,amsthm}
\usepackage{pst-node}
\usepackage{tikz-cd}
\usepackage{tikz}
\usepackage{enumitem}
\usepackage{tabularx}
\usepackage{float}

\newcounter{rownumber}
\setcounter{rownumber}{0}
\numberwithin{equation}{section}

%
%
\parskip=\smallskipamount
\makeatletter 
\renewcommand\p@enumii{}
\makeatother

\theoremstyle{plain}
\newtheorem{theorem}{Theorem}[section]
\newtheorem{lemma}[theorem]{Lemma}
\newtheorem{corollary}[theorem]{Corollary}

\theoremstyle{definition}

\theoremstyle{remark}
\newtheorem{remark}[theorem]{Remark}

\def\id{\mathop{\rm id}\nolimits}
\def\dim{\mathop{\rm dim}\nolimits}

\def\Rea{\mathop{\rm Re}\nolimits}
\def\Ima{\mathop{\rm Im}\nolimits}
\def\Orb{\mathop{\rm Orb}\nolimits}

\def\Bun{\mathop{\rm Bun}\nolimits}

\begin{document}


\title[]
{On structures of normal forms of complex points of small $\mathcal{C}^{2}$-perturbations of real $4$-manifolds embedded in a complex $3$-manifold}
\author{Tadej Star\v{c}i\v{c}}
\address{Faculty of Education, University of Ljubljana, Kardeljeva Plo\v{s}\v{c}ad 16, 1000 Lju\-blja\-na, Slovenia}
\address{Institute of Mathematics, Physics and Mechanics, Jadranska
  19, 1000 Ljubljana, Slovenia}
\email{tadej.starcic@pef.uni-lj.si}
\subjclass[2010]{32V40,58K50,15A21}
\date{19 July 2022}

\keywords{CR manifolds, closure graphs, complex points, normal forms, perturbations} 

\begin{abstract}  
We extend our previous result on the behaviour of the quadratic part of a complex points of a small $\mathcal{C}^{2}$-perturbation of a real $4$-manifold embedded in a complex $3$-manifold. 
We describe the change of the structure of the quadratic normal form of a complex point.
It is an immediate consequence of a theorem clarifying how small perturbations can change the bundle of a pair of one arbitrary and one symmetric $2\times 2$ matrix with respect to 
an action of a certain linear group.
\end{abstract}

\maketitle

\vspace{-2mm}

\section{Introduction} \label{intro}

Let $ M$ be a smooth real $2n$-submanifold in $\mathbb{C}^{n+1}$. A point
$p\in M$ is called \textit{complex} when $T_pM$ is a complex subspace in $T_p X$; its complex dimension is equal to $n$.   
Locally, near a complex point $p\in M$ we can see $M$ as a graph 
(see e.g. \cite{TS2}):
\begin{equation}\label{BasForm1}
w=\overline{z}^TAz+\Rea (z^TBz)+o(|z|^2), \quad (w(p),z(p))=(0,0),\,\,\, A\in \mathbb{C}^{n\times n}, B\in \mathbb{C}^{n\times n}_S,
\end{equation}
in which $(z,w)=(z_1,z_2,\ldots,z_n,w)$ are suitable local coordinates on
$X$, and $\mathbb{C}^{n\times n}$, $\mathbb{C}^{n\times n}_S$ are sets of all $n\times n$ matrices and $n\times n$ symmetric matrices, respectively.
A complex point $p$ is 
quadratically flat, if the quadratic part of (\ref{BasForm1}) is real valued.

When $n=1$ complex points are well understood; see papers of Bishop
\cite{Bishop}, Kenig and Webster \cite{KW}, Moser and Webster \cite{MW},
Bedford and Klingenberg \cite{BK} 
and Forst\-ne\-ri\v c \cite{F}. They are quadratically flat and given locally by
$w=z\overline{z}+\frac{\gamma}{2} (z^2+\overline{z}^2)+o(|z|^2)$, $0\leq \gamma$, or $w=z^2+\overline{z}^2+o(|z|^2)$.  
For $n=2$ a relatively simple
description of complex points up to quadratic terms was obtained by Coffman \cite{Coff}; it includes two gene\-ric normal forms 
$w=\tau z_1\overline{z}_2+z_2\overline{z}_1+\frac{a}{2}(z_1^{2}+\overline{z}_1^{2})+b(z_1z_2+\overline{z}_1\overline{z}_2)+\frac{d}{2}(z_2^{2}+\overline{z}_2^{2})+o(|z|^{2})
$,
$|a|=1$, $b>0$, $d\in \mathbb{C}$, $\tau\in (0,1)$, and 
$w=z_1\overline{z}_1+e^{i\theta}z_2\overline{z}_2+\frac{a}{2}(z_1^{2}+\overline{z}_1^{2})+b(z_1z_2+\overline{z}_1\overline{z}_2)+\frac{d}{2}(z_2^{2}+\overline{z}_2^{2})+o(|z|^{2})$, $\theta\in (0,\pi)$, $a,d>0$, $b\in \mathbb{C}^{*}$ (apply $(A,B)=
\big(\begin{bsmallmatrix}
0 & 1 \\
\tau  & 0
\end{bsmallmatrix},
\begin{bsmallmatrix}
a & b \\
b  & d
\end{bsmallmatrix}\big)$ and $(A,B)=\big(\begin{bsmallmatrix}
1 & 0 \\
0  & e^{i\theta}
\end{bsmallmatrix},
\begin{bsmallmatrix}
a & b \\
b  & d
\end{bsmallmatrix}\big)$ to (\ref{BasForm1}) for $n=2$, respectively).
If $n>2$ the quadratically flat complex points were studied
by 
Slapar and the author \cite{ST}. 
We refer to the papers of Dolbeault, Tomassini and Zaitsev \cite{DTZ1} and Fang and Huang \cite{FH} for results on holomorphic flattenability of $CR$-nonminimal 
real analytic submanifolds near complex points.
Note that formal normal forms of $CR$-singularities were considered by Burcea \cite{Bur} and 
Gong and Stolovitch \cite{GongStolo1}, 
among others.

In this paper we continue the research started in our 
paper \cite{TS2}, in which we explained when 
the quadratic part of a complex point 
of a real $4$-manifold embedded in a complex $3$-manifold
can be transformed under small $\mathcal{C}^{2}$-perturbations 
to the quadratic part of another different complex point.
For instance, \cite[Corollary 3.8]{TS2} implies that no sufficiently small $\mathcal{C}^{2}$-deformation of $w=z_1\overline{z}_1+\frac{1}{2}z_1^{2}+\frac{1}{2}z_2^{2}+o(|z|^{2})$ near $(0,0)$ (with $(A,B)=
\big(\begin{bsmallmatrix}
1 & 0 \\
0  & 0
\end{bsmallmatrix},
\begin{bsmallmatrix}
0 & 0 \\
0  & 1
\end{bsmallmatrix}\big)$ in (\ref{BasForm1}) for $n=2$) can lead to 
$w=\frac{1}{2} z_1\overline{z}_2+z_2\overline{z}_1+z_1^{2}+\overline{z}_1^{2}+z_1z_2+\overline{z}_1\overline{z}_2+z_2^{2}+\overline{z}_2^{2}+o(|z|^{2})$ (with $(A,B)=
\big(\begin{bsmallmatrix}
0 & 1 \\
\frac{1}{2}  & 0
\end{bsmallmatrix},
\begin{bsmallmatrix}
2 & 1 \\
1  & 2
\end{bsmallmatrix}\big)$ in (\ref{BasForm1}) for $n=2$).

We now focus on the change of the type of a 
complex point, i.e. on the structu\-re of $(A,B)$ in (\ref{BasForm1}).
In particular, 
we provide the following result describing possible arbitrarily small $\mathcal{C}^{2}$-deformations to generic normal forms.

\begin{theorem}
Let $M$ be a real $4$-manifold in $\mathbb{C}^{3}$ and let $p\in M$ be a complex point given locally by the equation (\ref{BasForm1}) with $n=2$ and $(A,B)=(A_p,B_p)$.
Then $A_p\neq 
\begin{bsmallmatrix}
0 & 1 \\
\tau_p  & 0
\end{bsmallmatrix}$ with $\tau_p\in (0,1)$ (or $A_p\neq 
\begin{bsmallmatrix}
1 & 0 \\
0  & e^{i\theta_p}
\end{bsmallmatrix}$ with $\theta_p \in (0,\pi)$) if and only if there exists an arbitrarily small $\mathcal{C}^{2}$-perturbation $M'$ of $M$, and such that $M'$ has a complex point $p'$, arbitrarily close to $p$, and $p'$ is locally given by 
the equation (\ref{BasForm1}) with $n=2$ and $(A,B)=\big(\begin{bsmallmatrix}
0 & 1 \\
\tau'  & 0
\end{bsmallmatrix},
\begin{bsmallmatrix}
a' & b' \\
b'  & d'
\end{bsmallmatrix}\big)$
for some $\tau'\in (0,1)$, $|a'|=1$, $b'>0$, $d'\in \mathbb{C}$ (and $(A,B)=\big(\begin{bsmallmatrix}
1 & 0 \\
0  & e^{i\theta'}
\end{bsmallmatrix},
\begin{bsmallmatrix}
a' & b' \\
b'  & d'
\end{bsmallmatrix}\big)$ for some $\theta'\in (0,\pi)$, $a',d'>0$, $b'\in \mathbb{C}^{*}$).
\end{theorem}

\vspace{-2mm}

A more general situaton is considered in 
Theorem \ref{izrek} and Corollary \ref{pospert}. 
However, due to technical reasons, these results are precisely stated in Sec. \ref{Sec3} and proved in later sections.
A substantial difference in comparison to \cite{TS2} is that our problem now reduces to a system of nonlinear 
equations with larger set of parameters. In general it makes the analysis considerably more involved. 
We add that 
Theorem \ref{izrek} might be of independent interest in matrix analysis since it
clarifies how small perturbations can change the bundle 
of a pair 
$(A,B)\in \mathbb{C}^{2\times 2}\times \mathbb{C}^{2\times 2}_S$
with respect to 
transformations $(cP^{*}AP,P^{T}BP)$ with nonsingular matrix $P$ and $|c|=1$. 


%
%
%
%
%
\section{Normal forms in dimension $2$}\label{secNF}

Any holomorphic change of coordinates that
preserves (\ref{BasForm1}) for $n=2$ 
transforms (\ref{BasForm1}) into the equation that can by a slight abuse of notation be written as
\vspace{-2mm}
\begin{equation*}
w=\overline{z}^T\left(cP^{*}AP\right) z+\Rea \left(z^T (P^TBP) z\right)+o(|z|^2), 
\qquad P\in GL_2(\mathbb{C}),\quad c\in S^{1},
\end{equation*}
%
where $S^1$ and $ GL_2(\mathbb{C})$ are a unit circle and the group of invertible $2\times 2$ matrices, respectively.
Studying the quadratic part of a complex point 
thus means examining the action of $S^1\times GL_2(\mathbb{C})$ 
on $\mathbb{C}^{2\times 2}\times \mathbb{C}^{2\times 2}_S $ (see also \cite{Coff} and \cite[Sec. 3]{TS2}):
\vspace{-1mm}
\begin{equation}\label{aAB}
\Psi\colon\bigl((c,P),(A,B)\bigr)\mapsto (cP^{*}AP,P^TBP), \qquad P\in GL_2(\mathbb{C}),\quad c\in S^{1}. 
\end{equation}
\vspace{-1mm}
%
An orbit at $(A,B)\in \mathbb{C}^{2\times 2}\times \mathbb{C}^{2\times 2}_S $ with respect to (\ref{aAB}) is denoted by $\Orb_{\Psi}(A,B)$.

For some applications it is 
useful to have 
a stratification into \emph{bundles} of matrices, i.e. sets of all matrices having similar properties. This notion was first introduced by Arnold \cite[Section 30]{Arnold} for the action of similarity; two matrices are in the same bundle under similarity precisely when their Jordan canonical forms have the same structure (with bijection between the sets of distinct eigenvalues). For instance, matrices with all distinct eigenvalues form the generic bundle.

Three bundles with respect to the action (\ref{aAB}) can be formed according to the sign of 
$\det\begin{bsmallmatrix}
A & \overline{B}\\
B & \overline{A}
\end{bsmallmatrix}
$ for $(A,B)\in \mathbb{C}^{2\times 2}\times \mathbb{C}^{2\times 2}_S$
(see \cite[Sec. 4]{Coff}). Slapar \cite{S2} (see also \cite{S3}) proved that the bundles with nonvanishing determinant are connected components of $\mathbb{C}^{2\times 2}\times \mathbb{C}^{2\times 2}_S$ and 
showed that up to smooth isotopy complex points of a real $4$-submanifold in $\mathbb{C}^{3}$ are locally given either by $w=z_1\overline{z}_1+z_2\overline{z}_2$ or $w=z_1\overline{z}_1+\overline{z}_2^{2}$.

Our goal is to understand 
the change of normal forms of (\ref{aAB})
under small perturbations, thus we use the list \cite[Sec. 7,Table 1]{Coff} (see also \cite[Lemma 2.2]{TS2}) of normal forms for orbits under (\ref{aAB}), 
to form bundles so that they contain 
normal forms of similar structure. To be more precise, each such set of normal forms 
is parameterized by smooth maps 
$\Lambda\to \mathbb{C}^{2\times 2}$, 
$\lambda \mapsto A(\lambda)$ and 
$\Lambda\to \mathbb{C}^{2\times 2}_S$,  
$\lambda \mapsto B(\lambda)$, in which $\Lambda\subset \mathbb{R}^{k}$ is a parameter set, and we define
the bundle of $(A_0,B_0)=(A(\lambda_0),B(\lambda_0))$ for $\lambda_0\in \Lambda$ with respect to the action $\Psi$ in (\ref{aAB}) as:
\vspace{-1mm}
\begin{equation}\label{bun0}
\Bun_{\Psi}(A_0,B_0):=\bigcup_{\lambda\in\Lambda}\Orb_{\Psi}(A(\lambda),B(\lambda)). 
\end{equation}
\vspace{-1mm}
Elements of a bundle must behave similarly under small perturbations (Sec. \ref{Sec3}).

To simplify the notation, $a\oplus d$ denotes the diagonal matrix with $a$, $d$ on the diagonal, while the $2\times 2$ identity-matrix and the $2\times 2$ zero-matrix are $I_2$ and $0_2$. 

\vspace{-1mm}

\begin{lemma}\label{lemalist}
Bundles of the action (\ref{aAB}), represented by pairs 
$(A,B)$ given in Table \ref{table2}, are immersed submanifolds in $\mathbb{C}^{2\times 2}\times \mathbb{C}^{2\times 2}_S$ with dimensions noted in the first column.
%
\end{lemma}

\vspace{-4mm}

\begin{table}[H]
\caption{Bundles of the action (\ref{aAB}). Here $0<\tau <1$, $0<\theta <\pi$,\,\, $a,b,d>0$,\,\, $\zeta \in \mathbb{C}$,\,\, $\varphi\in \mathbb{R}$, $\zeta^{*}\in \mathbb{C}^{*}$ are the parameters.}
\begin{tabular}{p{4mm}|| c| c  ||c | c || c | c ||c | c}
\small{$\dim$} & $A$ & $B$ & $A$ & $B$ & $A$ & $B$ & $A$ & $B$\\
\hline
14
&
\multirow{ 12}{*}{
\small{$1\oplus e^{i\theta}$}
}
&
$
\begin{bsmallmatrix}
a & \zeta^{*} \\
\zeta^{*}  & d
\end{bsmallmatrix}
$, 
&
\multirow{ 12}{*}{
$
\begin{bsmallmatrix}
0 & 1 \\
\tau & 0
\end{bsmallmatrix}
$
}
&
$
\begin{bsmallmatrix}
e^{i\varphi} & b \\
b & \zeta
\end{bsmallmatrix}
$,
&
\multirow{ 12}{*}{
$
\begin{bsmallmatrix}
0 & 1 \\
1 & i
\end{bsmallmatrix}
$
}
&
&
\multirow{ 12}{*}{
$
\begin{bsmallmatrix}
0 & 1 \\
0 & 0
\end{bsmallmatrix}
$
}
&
\\
%
& &
$\scriptscriptstyle{-\zeta^{*}\sim \zeta^{*}}$   &
&$\scriptscriptstyle{\varphi+\pi\sim \varphi}$
&&&&\\
\cline{1-1}\cline{3-3}\cline{5-5}\cline{7-7}\cline{9-9}

12 &
&
$
\begin{bsmallmatrix}
0 & b \\
b & d
\end{bsmallmatrix}
$
&
&
$
\begin{bsmallmatrix}
0 & b \\
b & e^{i\varphi}
\end{bsmallmatrix}
$,
&
&
$
a\oplus \zeta
$
&
&
$
\begin{bsmallmatrix}
\zeta^{*} & b \\
b & 1
\end{bsmallmatrix}
$
\\
%
%
&
&
$
\begin{bsmallmatrix}
a & b \\
b & 0
\end{bsmallmatrix}
$
&
&
 $\scriptscriptstyle{\varphi+\pi\sim \varphi}$
&
&
&
&
\\

%
%
&& $a\oplus d$ && $
1\oplus \zeta
$  &&&&\\
\cline{1-1}\cline{3-3}\cline{5-5}\cline{7-7}\cline{9-9}

10 &
&
$
\begin{bsmallmatrix}
0 & b \\
b & 0
\end{bsmallmatrix}
$
&
&
$
0\oplus 1
$
&
&
$
\begin{bsmallmatrix}
0 & b \\
b & 0
\end{bsmallmatrix}
$
&
&
$
\begin{bsmallmatrix}
0 & b \\
b & 1
\end{bsmallmatrix}
$
\\

&
&
$
a\oplus 0
$
&
&
$
\begin{bsmallmatrix}
0 & b \\
b & 0
\end{bsmallmatrix}
$
&
&
&
&
$
a\oplus 1
$
\\

&
&
$
0\oplus d
$
&
&
&
&
&
&
$
\begin{bsmallmatrix}
1 & b \\
b & 0
\end{bsmallmatrix}
$
\\ 
\cline{1-1}\cline{3-3}\cline{5-5}\cline{7-7}\cline{9-9}

9 &&&&&
&
$
0\oplus d
$
&&\\
\cline{1-1}\cline{3-3}\cline{5-5}\cline{7-7}\cline{9-9}

8 &
&
$0_2$
&&
$0_2$
&
&
&
&
$
\begin{bsmallmatrix}
0 & b \\
b & 0
\end{bsmallmatrix}
$\\

 &&&&&&&& 
$
1\oplus 0
$\\
 
  &&&&&&&&
  $
0\oplus 1
$\\
\cline{1-1}\cline{3-3}\cline{5-5}\cline{7-7}\cline{9-9}

7  &&&&&&
$0_2$
&&\\
\cline{1-1}\cline{3-3}\cline{5-5}\cline{7-7}\cline{9-9}
6 &&&&&&&&$0_2$\\

%
%
\hline

$11$ & \multirow{8}{*}{$I_2$} & $a\oplus d$, $\scriptscriptstyle{a< d}$ &  \multirow{6}{*}{\small{$1\oplus -1$}} & $a\oplus d$, $\scriptscriptstyle{a< d}$ & 
\multirow{6}{*}{
$
\begin{bsmallmatrix}
0 & 1 \\
1 & 0
\end{bsmallmatrix}
$
}
& $1\oplus de^{i\theta}$ & 
\multirow{8}{*}{\small{$1\oplus 0
$}}
& \\
\cline{1-1}\cline{3-3}\cline{5-5}\cline{7-7}\cline{9-9}

10  &  &  &   &  &  &
 $
\begin{bsmallmatrix}
0 & b \\
b & 1
\end{bsmallmatrix}
$  
  & & $a \oplus 1$ \\
\cline{1-1}\cline{3-3}\cline{5-5}\cline{7-7}\cline{9-9}

9 & &  $ d I_2$ &  & $dI_2$ & &  & & \\

 &  &  $0\oplus d$  & &
 $
 \begin{bsmallmatrix}
 0 & b \\
 b & 0
 \end{bsmallmatrix}
$
 &  & & &
\\

&&&& $0\oplus d$ &&&&\\
\cline{1-1}\cline{3-3}\cline{5-5}\cline{7-7}\cline{9-9}

8 &&&&  &&
$1 \oplus 0 $
&&$0\oplus 1$\\

&&&&  &&&&$
\begin{bsmallmatrix}
0 & 1 \\
1 & 0
\end{bsmallmatrix}
$\\

\cline{1-1}\cline{3-7}\cline{9-9}

6 & & & & & \multirow{4}{*}{$0_2$} & $I_2$ & & $a\oplus 0$\\

\cline{1-1}\cline{3-3}\cline{5-5}\cline{7-7}\cline{9-9}
5 & & $0_2$ & & $0_2$ & &&  & \\
\cline{1-5}\cline{7-7}\cline{9-9}
4 &   \multicolumn{2}{c}{} &   \multicolumn{2}{c||}{} & & $1\oplus 0$ &  & $0_2$\\
\cline{7-9}
0 &   \multicolumn{2}{c}{} &   \multicolumn{2}{c||}{} & & $0_2$ &  \multicolumn{2}{c}{} \\
\end{tabular}
 \label{table2}
\end{table}
%
%
%


Note that we arranged orbits $\Orb_{\Psi}(1\oplus \sigma,d_0\oplus d)$ for $\sigma\in \{1,-1\}$, $d>0$, $d_0\in \{0,d\}$ into bundles $\Bun_{\Psi}(1\oplus \sigma,0\oplus d)=\cup_{d>0}\Orb_{\Psi}(1\oplus \sigma,0\oplus d)$ and $\Bun_{\Psi}(1\oplus \sigma,dI_2)=\cup_{d>0}\Orb_{\Psi}(1\oplus \sigma, d I_2)$, $\sigma\in \{1,-1\}$. 
Next, 
$\Orb_{\Psi}\big(\begin{bsmallmatrix}
0 & 1 \\
0  & 0
\end{bsmallmatrix},
\begin{bsmallmatrix}
\zeta & b \\
b  & 1
\end{bsmallmatrix}\big)
$ for $\zeta\in \mathbb{C}$, 
$b>0$
are split into 
bundles with representatives 
$
\big(\begin{bsmallmatrix}
0 & 1 \\
0  & 0
\end{bsmallmatrix},
\begin{bsmallmatrix}
\zeta^{*} & b \\
b  & 1
\end{bsmallmatrix}\big)
$  and 
$
\big(\begin{bsmallmatrix}
0 & 1 \\
0  & 0
\end{bsmallmatrix},
\begin{bsmallmatrix}
0 & b \\
b  & 1
\end{bsmallmatrix}\big)
$ for $\zeta^{*}\in \mathbb{C}^{*},b>0$.

\vspace{-1mm}

\begin{proof}[Sketch of the proof of Lemma \ref{lemalist}]
%
Fix $(A_0,B_0)\in \mathbb{C}^{2\times 2}\times \mathbb{C}_S^{2\times 2}$ from Table \ref{table2}
and define
\begin{equation}\label{actionL}
\Psi_{\Lambda}\colon S^1\times GL_2(\mathbb{C})\times \Lambda\to \mathbb{C}^{2\times 2}\times \mathbb{C}^{2\times 2}_S, \quad (c,P,\lambda)\mapsto \Psi\big(c,P,A(\lambda),B(\lambda)\big),
\end{equation}
where $\Psi_{\Lambda}(1,I_2,\lambda_0)=(A_0,B_0)$. For every $g\in S^1\times GL_2(\mathbb{C})$ the maps $\Phi^g \colon (A,B) \mapsto \Phi(g,(A,B))$ and $R_g\colon h\mapsto hg$ are automorphisms of $\mathbb{C}^{2\times 2}\times \mathbb{C}^{2\times 2}_S$ and $S^1\times GL_2(\mathbb{C})$, respectively, and we have $\Psi^g\circ \Psi_{\Lambda}=\Psi_{\Lambda} \circ(L_g\times \id_{\Lambda})$. Thus the rank of $d \Psi_{\Lambda}$ does not depend on $\lambda\in \Lambda$, $g\in S^1\times GL_2(\mathbb{C})$ 
and by the constant rank theorem (e.g. \cite[Theorem IV.5.8]{Boot}) the bundle $\Bun_{\Psi}(A_0,B_0)\subset \mathbb{C}^{2\times 2}\times \mathbb{C}_S^{2\times 2}$ is an immersed manifold. 

In a similar manner as tangent spaces of orbits in \cite[Lemma 2.2]{TS2} are computed, tangent spaces of bundles are obtained. 
We choose a path in $S^{1}\times GL_2(\mathbb{C})$: 
\[
\gamma\colon (-\delta,\delta)\to S^{1}\times GL_2(\mathbb{C}), \quad \gamma(t)=(e^{i\alpha t},I+tX), \qquad \alpha\in \mathbb{R}, X\in \mathbb{C}^{2\times 2}, \delta>0,
\]
and calculate:
\begin{align*}
&\frac{d}{dt}\Big|_{t=0}e^{i\alpha t}\big((I+tX)^*A(t)(I+tX)\big)
                                      =i\alpha A_0+\tfrac{d}{dt}\big|_{t=0}A(t)+  (X^*A_0+A_0X),\\
&\frac{d}{dt}\Big|_{t=0}\big((I+tX)^TB(t)(I+tX)\big) 
                                        =\tfrac{d}{dt}\big|_{t=0}B(t)+(X^TB_0+B_0X).
\end{align*}
Writing $X=\sum_{j,k=1}^2(x_{jk}+iy_{jk})E_{jk}$, where $E_{jk}$ is the elementary matrix with one in the $j$-th row and $k$-th column and zeros otherwise, we deduce that
\vspace{-2mm}
\begin{align*}
X^*A_0+A_0X &=  \sum_{j,k=1}^2(x_{jk}-iy_{jk})E_{kj}A_0+\sum_{j,k=1}^2(x_{jk}+iy_{jk})A_0E_{jk}\\
                             &=\sum_{j,k=1}^2x_{jk}(E_{kj}A_0+A_0E_{jk})+\sum_{j,k=1}^2y_{jk}i(-E_{kj}A_0+A_0E_{jk}),\\
\tfrac{d}{dt}\big|_{t=0}A(t)&=\beta_{21}E_{21}+\beta_{22}E_{22}  ,                           
\end{align*}
%
\vspace{-1mm}
\[
\beta_{22}=
\left\{
\begin{array}{ll}
\beta ie^{i\theta}, & A=1\oplus e^{i\theta}, 0<\theta<\pi\\
0, & \textrm{otherwise}
\end{array}
\right.,\quad
\beta_{21}=
\left\{
\begin{array}{ll}
\beta, & A=
\begin{bsmallmatrix}
0 & 1\\
\tau & 0
\end{bsmallmatrix},  0<\tau<1\\
0, & \textrm{otherwise}
\end{array}
\right.,\quad \beta\in \mathbb{R}.
\]
In a similar fashion we conclude that
\vspace{-1mm}
\begin{align*}
&X^TB_0+B_0X  
                             = \sum_{j,k=1}^2x_{jk}(E_{kj}B_0+B_0E_{jk})
                               +\sum_{j,k=1}^2y_{jk}i(E_{kj}B_0+B_0E_{jk}),\\
&\tfrac{d}{dt}\big|_{t=0}B(t)=\sum_{j,k=1}^2\gamma_{jk}E_{jk},\qquad
\gamma_{jk}=\left\{
\begin{array}{ll}
 z_{jk}, & B_{jk}(t)=(B_0)_{jk}+z_{jk}t, z_{jk}\in \mathbb{C}\\
i(B_0)_{jk}\omega_{jk}, & B_{jk}(t)=(B_0)_{jk}e^{i\omega_{jk}t}, \omega_{jk}\in \mathbb{R}\\
0, & \textrm{otherwise}
\end{array}
\right..                                                        
\end{align*}
%
Note that if $A_{jk}(t)$ (or $B_{jk}(t)$) is constant, then $\beta_{jk}=0$ ($\gamma_{jk}=0$).

In view of the identification 
$
\mathbb{R}^{8} \times \mathbb{R}^{6}\approx\mathbb{C}^{2\times 2}\times \mathbb{C}^{2\times 2}_S$
we denote ($j,k\in\{1,2\}$):
%
\vspace{-1mm}
\begin{align*}
&\widetilde{u}_{jk}\approx (0,E_{jk}), \qquad \qquad \widetilde{v}_{jk}\approx (0,iE_{jk}), \quad\qquad j\leq k \\
&u_{jk}\approx (E_{kj}A_0+  A_0E_{jk},E_{kj}B_0+B_0E_{jk}), \quad  v_{jk}\approx i(-E_{kj}A_0+A_0E_{jk},E_{kj}B_0+B_0E_{jk}),
\end{align*}
\begin{align*}
& w_1\approx \left\{
\begin{array}{ll}
(ie^{i\theta}E_{22},0), & A=1\oplus e^{i\theta}, 0<\theta<\pi\\
(E_{21},0), & A=
\begin{bsmallmatrix}
0 & 1\\
\tau & 0
\end{bsmallmatrix},  0<\tau<1\\
0, & \textrm{otherwise}
\end{array}
\right.,\qquad 
\begin{array}{l}
w_2\approx (iA,0),\\
w_{3} \approx(0,i(B_0)_{11}E_{11}),\\
w_{4} \approx (0,i(B_0)_{22}E_{22}).
\end{array}
\end{align*}
%
The tangent space of $\Bun_{\Psi}(A_0,B_0)$ can be seen as a linear space spanned by vectors $\{w_1,w_2\}\cup \{ u_{jk},v_{jk}\}_{j,k\in\{1,2\} }$ and a subset of vectors $\{w_{3},w_{4}\}\cup\{ \widetilde{u}_{jk},\widetilde{v}_{jk}\}_{j,k\in\{1,2\},j\leq k}$.
If $B_{jj}(t)=(B_0)_{jj}(\lambda_0)e^{i\omega_{jj}t}$ for $j\in \{1,2\}$, then $w_{j+2}$ is in the span, while for $B_{jk}(t)=(B_0)_{jk}+z_{jk}t$, $z_{jk}\neq 0$ vectors $\widetilde{u}_{jk},\widetilde{v}_{jk}$ are in the span.
It is straightforward 
to compute the dimensions; see \cite[Lemma 2.2]{TS2} for the details in the case of orbits. 
\end{proof}

%
%
%
%

\section{Change of the normal form under small perturbations}\label{Sec3}

In this section we study how small deformations of a pair of one arbitrary and one symmetric matrix can change its bundle under the action (\ref{aAB}). 
For the sake of clarity the notion \emph{closure graph} for bundles for an action is introduced; compare it with the closure graph for orbits in \cite{TS2}. Given an action $\Phi$, \emph{vertices} of its closure graph are pairwise disjoint bundles of orbits with respect to $\Phi$, and there is a \emph{path} from a vertex $\widetilde{\mathcal{V}}$ to a vertex $\mathcal{V}$ precisely when $\widetilde{\mathcal{V}}$ lies in the closure of $\mathcal{V}$. The path from $\widetilde{\mathcal{V}}$ to $\mathcal{V}$ is denoted by $\widetilde{\mathcal{V}}\to \mathcal{V}$. 
To simplify the notation we usually write $\widetilde{V}\to V$ for $\widetilde{V}\in \widetilde{\mathcal{V}}$, $V\in \mathcal{V}$ (instead of $\widetilde{\mathcal{V}}\to \mathcal{V}$).
We also require that if $\widetilde{V}\in\mathcal{\widetilde{V}}$ (hence $\Orb_{\Phi}(\widetilde{V})$) is contained in the closure of $\mathcal{V}$, 
then whole bundle $\widetilde{\mathcal{V}}$ must lie in the closure of $\mathcal{V}$; it does not hold in general. 
Closure graphs are reflexive and transitive.

When $\widetilde{V}\not\to V$ it is useful to know the distance from $\widetilde{V}$ to the bundle $\mathcal{V}\ni V$. It suffices to consider the distance from the normal form
of $\widetilde{V}$ (see e.g. \cite[Remark 3.2]{TS2}). 
We use the max norm $\|X\|=\max_{j,k\in \{1, 2\}}|x_{j,k}|$, $X=[x_{j,k}]_{j,k=1}^{2}\in \mathbb{C}^{2\times 2}$ to measure the distance between matrices. 

To emphasize the difference between the closure graphs for orbits and bundles we take look at the action of similarity on $\mathbb{C}^{2\times 2}$. Given $\lambda,\mu\in \mathbb{C}$ with $\lambda\neq \mu$ we have $\lambda\oplus \lambda \not\to \lambda\oplus \mu$ in the closure graph for orbits (eigenvalues depend continuously on the entries of the matrix), but $\lambda\oplus \lambda\to \lambda\oplus \mu$ in the closure graph for bundles (the bundle of $\lambda\oplus \mu$ is dense in $\mathbb{C}^{2\times 2}$). For a comprehensive theory on closure hierarchy 
of matrices under similarity we refer to \cite{EEK} and \cite{EJK}.

The action (\ref{aAB}) is closely related with the following two actions:
%
%
%
\begin{align}
\label{actionpsi1}
&\Psi_1\colon (c,P,A)\big)\mapsto cP^{*}AP, \quad P\in GL_2(\mathbb{C}), \,c\in S^{1},\,A\in \mathbb{C}^{2\times 2}\\
%
\label{actionpsi2}
&\Psi_2\colon (P,B)\mapsto P^TBP, \quad P\in GL_2(\mathbb{C}),\, B\in \mathbb{C}_S^{2\times 2}. 
\end{align}
Bundles under these actions are defined the same way as bundles for $\Psi$ in (\ref{bun0}).
The closure graph for (\ref{actionpsi2}) with trivial bundles (orbits) is simple (see \cite[Lemma 3.2]{TS2}); we add a few necessary conditions on its paths and prove them in Sec. \ref{proofL}. For closure graphs of all $2\times 2$ or $3\times 3$ matrices see \cite{FKS1}.

\begin{lemma}\label{lemapsi2}
The closure graph for the action (\ref{actionpsi2}) 
is 
\begin{equation}\label{closureTgraph}
0_2 \to 1\oplus 0\to I_2,
\end{equation}
in which $1\oplus 0 $ and $I_2$ 
correspond to bundles of symmetric matrices of rank $1$ and $2$.  
Furthermore, let $B=\begin{bsmallmatrix}
a & b\\
b & b
\end{bsmallmatrix}\in \mathbb{C}^{2\times 2}_S$, $\widetilde{B}=\begin{bsmallmatrix}
\widetilde{a} & \widetilde{b} \\
\widetilde{b}  & \widetilde{d} 
\end{bsmallmatrix}\in \mathbb{C}^{2\times 2}_S$,
$P=\begin{bsmallmatrix}
x & y\\
u & v
\end{bsmallmatrix}\in GL_2(\mathbb{C})$ and $F=\begin{bsmallmatrix}
\epsilon_1 & \epsilon_2\\
\epsilon_2 & \epsilon_4
\end{bsmallmatrix}\in \mathbb{C}^{2\times 2}_S$ be such that $P^{T} AP=\widetilde{B}+F$. 
Then the following statements hold: 
\begin{enumerate}
\item \label{lemapsi2a} If $\widetilde{B}$, $B$ are normal forms in (\ref{closureTgraph}) and such that $\widetilde{B}\not \to B$, then $\|F\|\geq 1$.
\item \label{lemapsi22} If $\widetilde{B}\to B$, then there exist 
\small 
$\epsilon_2',\epsilon_2''\in \mathbb{C}$, 
$|\epsilon_2'|,|\epsilon_2''|\leq \left\{\begin{array}{ll}
\tfrac{\|F\|(4\|\widetilde{B}\|+2+|\det \widetilde{B}|)}{|\det \widetilde{B}|}, & \det \widetilde{B}\neq 0\\
\sqrt{\|F\|(4\|\widetilde{B}\|+3)}, & \det \widetilde{B}= 0
\end{array}\right.$,
\normalsize
so that equations listed in the third column (and in the line corresponding to $B$) of Table \ref{table4} are valid. 
\end{enumerate}
\end{lemma}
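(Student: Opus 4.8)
The plan is to prove the two statements by directly analyzing the equation $P^{T}BP = F + \widetilde{B}$ through the explicit entries of $P = \begin{bsmallmatrix} x & y \\ u & v \end{bsmallmatrix}$. Writing $B = \begin{bsmallmatrix} a & b \\ b & b \end{bsmallmatrix}$, one computes
\[
P^{T}BP = \begin{bmatrix}
a x^{2} + 2bxu + bu^{2} & axy + b(xv+yu) + buv \\
axy + b(xv+yu) + buv & ay^{2} + 2byv + bv^{2}
\end{bmatrix},
\]
and each of the three scalar equations must equal the corresponding entry of $F+\widetilde{B}$. The key structural fact to exploit is $\det(P^{T}BP) = (\det P)^{2}\det B$, so $(\det P)^{2}\det B = \det(F+\widetilde{B})$. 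First I would dispose of part (1): if $\widetilde{B}\not\to B$, then by the closure graph (\ref{closureTgraph}) the rank of $\widetilde{B}$ exceeds the rank of $B$ (e.g. $\widetilde{B}$ has rank $\geq 1$ while $B=0_2$, or $\widetilde{B}$ has rank $2$ while $B$ has rank $\leq 1$, or $B$ has rank $1$ but is not $T$-equivalent to $1\oplus 0$ — which for $2\times 2$ symmetric matrices cannot happen since any rank-$1$ symmetric matrix is $T$-equivalent to $1\oplus 0$). In each such case $\widetilde{B}$ cannot be approximated by matrices in $\Bun(B)$, since $\Bun(B)$ is exactly the set of symmetric matrices of rank $=\rank B$, a closed-from-below condition: one shows that $\|F\| < 1$ would force, via the determinant identity and the cofactor identity, that $F+\widetilde{B}$ already has rank $\leq \rank B$, contradicting $\rank(F+\widetilde{B}) = \rank(P^T B P) = \rank B$ versus $\rank\widetilde{B} > \rank B$ being stable under small perturbation. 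The quantitative bound $\|F\|\geq 1$ comes from the normalization of the normal forms (entries $0$ and $1$), so a perturbation of size $<1$ cannot change the rank pattern.

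Next, for part (2), I would go line by line through Table \ref{table4}, i.e. through the three possible bundles $B \in \{0_2,\; 1\oplus 0,\; I_2\}$ with $\widetilde{B}\to B$. The case $B = 0_2$ is degenerate ($P^T B P = 0$ forces $\widetilde B = -F$, so the claimed equations hold trivially and $\epsilon_2', \epsilon_2''$ can be taken $0$). For $B = I_2$ (so $a=1$, $b$-entry pattern: actually $B = I_2$ means the off-diagonal is $0$ and both diagonal entries $1$) the identity $P^T P = F + \widetilde B$ together with $(\det P)^2 = \det(F+\widetilde B) = \det\widetilde B + O(\|F\|)$ lets me solve: since $\widetilde B \to I_2$ forces $\widetilde B$ invertible for small $F$, I extract estimates on the entries of $P$ from $\det(F+\widetilde B)$, then isolate the off-diagonal equation $xy + uv = \epsilon_2 + \widetilde b$ to read off the bound on the relevant auxiliary quantities. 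The main case is $B = 1\oplus 0$ (rank $1$), where $\widetilde B \to 1\oplus 0$ means $\det\widetilde B = 0$ (or nearly so — but actually $\widetilde B$ is a vertex, hence one of $0_2$ or $1\oplus 0$, so $\det \widetilde B = 0$ genuinely, and the bound $\sqrt{\|F\|(4\|\widetilde B\|+3)}$ is the one to hit). Here $\det(F+\widetilde B) = (\det P)^2 \cdot 0 = 0$, which gives a quadratic constraint $\epsilon_1\epsilon_4 - \epsilon_2^2 + \epsilon_1 \widetilde d + \epsilon_4 \widetilde a + 2\epsilon_2 \widetilde b + \widetilde a \widetilde d - \widetilde b^2 = 0$; combined with $\widetilde a\widetilde d = \widetilde b^2$, this yields $\epsilon_2^2 = \epsilon_1\epsilon_4 + \epsilon_1\widetilde d + \epsilon_4\widetilde a + 2\epsilon_2\widetilde b$, from which $|\epsilon_2|$ — and the derived quantities $\epsilon_2', \epsilon_2''$ appearing in the table — are bounded by $\sqrt{\|F\|(4\|\widetilde B\| + 3)}$ after crude triangle-inequality estimates (using $|\epsilon_i|\leq \|F\|\leq 1$, so $\epsilon_1\epsilon_4 \leq \|F\|^2 \leq \|F\|$, etc.).

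I expect the main obstacle to be bookkeeping rather than conceptual: matching up exactly which combinations of the $\epsilon_i$ are the $\epsilon_2', \epsilon_2''$ referenced in Table \ref{table4} (which is not reproduced in this excerpt), and then performing the triangle-inequality estimates tightly enough to land the stated constants $\tfrac{\|F\|(4\|\widetilde B\| + 2 + |\det\widetilde B|)}{|\det\widetilde B|}$ and $\sqrt{\|F\|(4\|\widetilde B\|+3)}$. The $1/|\det\widetilde B|$ denominator in the $\det\widetilde B\neq 0$ case signals that one divides through by $\det\widetilde B$ when solving for $P$'s entries (Cramer-type inversion of a relation coming from $P^T B P = F+\widetilde B$), so the delicate point is controlling $(\det P)^2 = \det(F+\widetilde B)/\det B$ and then feeding that back into the linear-in-$P$-entries off-diagonal equation. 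A secondary subtlety is making sure that in part (1) the normal-form normalization really does give the clean threshold $\|F\|\geq 1$ and not merely $\|F\| \geq c$ for some unspecified $c$ — this should follow because the relevant "jump" in each non-path case is between a $0$ entry and a $1$ entry (or a rank change detected by a single entry of absolute value $1$ in the normal form), so I would argue it via the explicit list of vertices $0_2, 1\oplus 0, I_2$ case by case.
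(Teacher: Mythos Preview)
Your guess about Table \ref{table4} is the main gap. You assumed its rows are the three normal forms $0_2$, $1\oplus 0$, $I_2$ under $T$-congruence, but in fact the rows are parametrized families of $B$ such as $\begin{bsmallmatrix}0&b\\b&d\end{bsmallmatrix}$, $\begin{bsmallmatrix}a&b\\b&0\end{bsmallmatrix}$, $\begin{bsmallmatrix}0&b\\b&0\end{bsmallmatrix}$, $0\oplus d$, $a\oplus 0$ (these are the $B$-components that show up in the normal-form pairs of Lemma \ref{lemalist}), and the third column records not scalar bounds on perturbation entries but \emph{linear relations among the entries $x,y,u,v$ of $P$}, for instance
\[
u\bigl(i(-1)^{l}\sqrt{\det\widetilde B}+\widetilde b+\epsilon_2'\bigr)=v(\widetilde a+\epsilon_1),\qquad
v\bigl(-i(-1)^{l}\sqrt{\det\widetilde B}+\widetilde b+\epsilon_2''\bigr)=u(\widetilde d+\epsilon_4).
\]
So the target of part (2) is quite different from what you sketched.

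The paper's derivation runs as follows. Write the three componentwise equations of $P^{T}BP=\widetilde B+F$ for, say, $B=\begin{bsmallmatrix}0&b\\b&d\end{bsmallmatrix}$; then multiply $\det P = vx-uy$ by $b$ and add it to, respectively subtract it from, the middle equation. This separates $2bvx+duv$ from $2buy+duv$. Cross-multiplying the resulting equalities against the first and third componentwise equations (by $u$ and $v$ respectively) produces relations of the form $u(b\det P+\widetilde b+\epsilon_2)=v(\widetilde a+\epsilon_1)$ and $v(-b\det P+\widetilde b+\epsilon_2)=u(\widetilde d+\epsilon_4)$. Finally one invokes the determinant estimate of Lemma \ref{lemadet} to write $b\det P = i(-1)^{l}\sqrt{\det\widetilde B}+r$ with $|r|$ controlled as in the statement; the auxiliary $\epsilon_2',\epsilon_2''$ are nothing more than $\epsilon_2$ shifted by $r$. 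Your determinant identity $(\det P)^{2}\det B=\det(\widetilde B+F)$ is precisely what Lemma \ref{lemadet} packages, so you had the right ingredient in hand; but the add-and-subtract-$b\det P$ trick and the resulting linear relations in $u,v$ (or symmetrically in $x,y$) are what the lemma is actually recording, and your outline via quadratic constraints on $\epsilon_2$ and Cramer-type inversion does not reach them.
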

\vspace{-3mm}
\small
\begin{table}[hbt!]
\caption{
Necessary conditions on $B$ and $P$ (given that $P^{T} BP=\widetilde{B}+F$).
}
\begin{tabular}{|c|c|l|}
\hline
 & $B$ &                \\
\hline
\refstepcounter{rownumber}
\label{p3bd}  D\ref{p3bd}  & 
$\begin{bsmallmatrix}
0 & b \\
b &  d
\end{bsmallmatrix}$
& 
$u(i(-1)^{l}\sqrt{\det\widetilde{B}}+\widetilde{b}+\epsilon_2')=v(\widetilde{a}+\epsilon_4)$, \,\, $v(-i(-1)^{l}\sqrt{\det\widetilde{B}}+\widetilde{b}+\epsilon_2'')=u(\widetilde{d}+\epsilon_4)$ \\
		\hline
\refstepcounter{rownumber}
\label{p3ba}  D\ref{p3ba}  & 
$\begin{bsmallmatrix}
a & b \\
b &  0
\end{bsmallmatrix}$
& 
$y(i(-1)^{l}\sqrt{\det\widetilde{B}}+\widetilde{b}+\epsilon_2')=x(\widetilde{d}+\epsilon_4)$, \,\, $x(-i(-1)^{l}\sqrt{\det\widetilde{B}}+\widetilde{b}+\epsilon_2'')=y(\widetilde{a}+\epsilon_1)$ \\
		\hline	
\refstepcounter{rownumber}
\label{p4}  D\ref{p4}  & 
$\begin{bsmallmatrix}
0 & b \\
b &  0
\end{bsmallmatrix}$
& 
$2bvx=i(-1)^{l}\sqrt{\det\widetilde{B}}+\widetilde{b}+\epsilon_2'$, \,\, $2buy=(-i(-1)^{l}\sqrt{\det\widetilde{B}}+\widetilde{b}+\epsilon_2''$ \\
		\hline
\refstepcounter{rownumber}
\label{p5d}  D\ref{p5d}  & 
$0\oplus d$
& 
$u(\widetilde{b}+\epsilon_2)=v(\widetilde{a}+\epsilon_4)$, \,\, $v(\widetilde{b}+\epsilon_2)=u(\widetilde{d}+\epsilon_4)$ \\
		\hline		
\refstepcounter{rownumber}
\label{p5a}  D\ref{p5a}  & 
$a\oplus 0$
& 
$y(\widetilde{b}+\epsilon_2)=x(\widetilde{d}+\epsilon_4)$, \,\, $x(\widetilde{b}+\epsilon_2)=y(\widetilde{a}+\epsilon_1)$  \\
		\hline			
	\end{tabular}
 \label{table4}
\end{table}	
\normalsize

%
By making a more detailed analysis as in \cite[Lemma 3.4]{TS2} 
(see also
\cite[Theorem 2.2]{FKS2})
we get the closure graph for bundles under the action (\ref{actionpsi1}) along with necessary conditions related to its paths; 
the proof is given in Sec. \ref{proofL}. 

\begin{lemma}\label{lemapsi1}
The closure graph for bundles under the action (\ref{actionpsi1}) is drawn in Figure \ref{cgraph1}. 
It contains six vertices corresponding to bundles (orbits) with normal forms $0_2$, $1\oplus 0$, $I_2$, $1\oplus -1$, $\begin{bsmallmatrix}
0 & 1\\
0 & 0
\end{bsmallmatrix}$, $\begin{bsmallmatrix}
0 & 1\\
1 & i
\end{bsmallmatrix}$, and two vertices for bundles with normal forms 
$1\oplus e^{i\theta}$ for 
$\theta\in (0,\pi)$ 
and  
$\begin{bsmallmatrix}
0 & 1\\
\tau & 0
\end{bsmallmatrix}$ for $\tau\in (0,1)$.

%
%
\vspace{-3mm}
\small
\begin{figure}[hbt!]
\begin{equation*}
\begin{tikzcd}[row sep=1.1em]
1\oplus e^{i\theta}   &      &   
\begin{bsmallmatrix}
0 & 1\\
\tau & 0
\end{bsmallmatrix}       & \qquad 8 \\
 & 
\begin{bsmallmatrix}
0 & 1\\
1 & i
\end{bsmallmatrix}
\arrow[ur, ""] \arrow[ul, ""] & 
 & \qquad 7\\
&  &
\begin{bsmallmatrix}
0 & 1\\
0 & 0
\end{bsmallmatrix}
\arrow[uu, ""]
& \qquad 6 \\
I_2 \arrow[uuu, ""] & 
1\oplus -1\approx
\begin{bsmallmatrix}
0 & 1\\
1 & 0
\end{bsmallmatrix}
\arrow[uu, ""] & 
& \qquad 5 \\
& 
1\oplus 0
\arrow[u, ""] \arrow[ul, ""] \arrow[uur,bend right= 10, ""]  & & \qquad 4\\
 & 
0_2
\arrow[u, ""]  & & \qquad 0
\end{tikzcd}
\end{equation*}
\caption{The closure graph for the action (\ref{actionpsi1}).
} 
\label{cgraph1}
\end{figure}
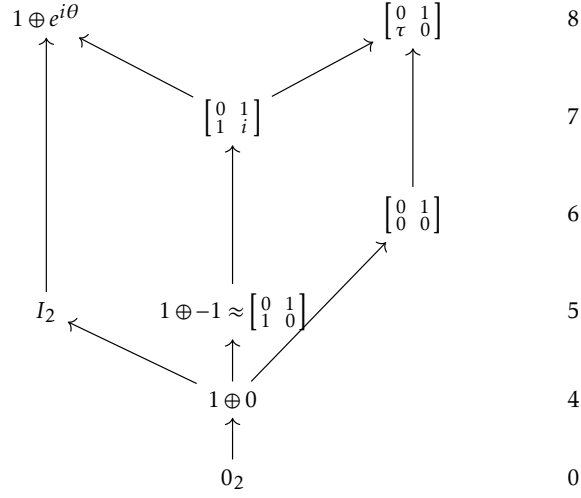
\normalsize
Furthermore, let $\widetilde{A}$, $A$ be normal forms in Figure \ref{cgraph1}, and let $E=cP^{*} AP-\widetilde{A}$ for some $c\in S^{1}$, $P=\begin{bsmallmatrix}
x & y\\
u & v
\end{bsmallmatrix}\in GL_2(\mathbb{C})$, $E\in \mathbb{C}^{2\times 2}$ with $\|E\|<1$.
Then the following statements hold:
\begin{enumerate}
\item \label{lemapsi1a} If $\widetilde{A}\not\to A$, then there exists a constant $\mu>0$ such that $\|E\|\geq \mu$.
\item \label{lemapsi11} If $\widetilde{A}\to A$, then there is a constant $\nu>0$ such that the moduli of expressions
listed in the fourth column (and in the line corresponding to $\widetilde{A}$, $A$) of Table \ref{table3} are bounded by $\nu\sqrt{\|E\|}$. (If $\widetilde{A}\in GL_2(\mathbb{C})$ then also $\|E\|\leq \tfrac{|\det \widetilde{A}|}{8\|\widetilde{A}\|+4}$ is assumed.) 
\end{enumerate}
\end{lemma}
\setcounter{rownumber}{0}
\small
\begin{table}[hbt!]
\caption{Necessary conditions on $A,P,c$ (given that $cP^{*} AP=\widetilde{A}+E$).
}
\begin{tabular}{|c|c|c|l|l|}
\hline
 &	$\widetilde{A}$ & $A$ &              &    \\
		\hline
\refstepcounter{rownumber}
\label{r32}  C\ref{r32}  & 
$\alpha \oplus 0$
&
$1 \oplus e^{i\theta}$
& 
 $|x|^2+e^{i\theta}|u|^2-c^{-1}\alpha$, $|y|^2+e^{i\theta}|v|^2$ & $\scriptstyle{\alpha\in\{0,1\}}$, $\scriptstyle{0< \theta<\pi}$\\
& & & $(\sin\theta) \overline{u}v$, $(\sin\theta) \overline{x}y$, $\overline{x}y+(\cos \theta) \overline{u}v$  &  \\
\hline
\refstepcounter{rownumber}
\label{r33}  C\ref{r33}  & 
$
\begin{bsmallmatrix}
0 & 1 \\
1 &  \omega
\end{bsmallmatrix}
$
&
$1 \oplus e^{i\theta}$
& 
 $|x|^2-|u|^2$, $|y|^2-|v|^2$, $\overline{x}y-\overline{u}v-(-1)^{k}$, $\sin\theta$ & $\scriptstyle{k\in \mathbb{Z}}$; $\scriptstyle{0< \theta<\pi}$,  $\scriptstyle{\omega\in \{0,i\}}$\\
  &   &   & & $\scriptstyle{\textrm{or } \theta=\pi,\omega=0}$\\
		\cline{4-5}
 &  & & 
$(\sin \theta) |v|^2-1$, $(\sin \theta)|u|^{2}$, $k=0$ &   $\omega=i$\\
		\hline		
\refstepcounter{rownumber}
\label{r4}  C\ref{r4} &
$\alpha\oplus 0$
&
$
\begin{bsmallmatrix}
0 & 1 \\
\tau &  0
\end{bsmallmatrix}
$
&
$\scriptstyle{(1+\tau)\Rea(\overline{x}u)+i(1-\tau)\Ima(\overline{x}u)-\frac{\alpha}{c}}$,$(1-\tau)\overline{x}v$ & $\scriptstyle{0\leq \tau \leq 1,\alpha\in\{0,1\}}$,\\ 
&&& 
$\Rea(\overline{y}v)$, $(1-\tau)\Ima(\overline{y}v)$,$(1-\tau)\overline{u}y$, $\overline{x}v+\overline{u}y$  & 
\\
\cline{4-5}
&&& $c^{-1}-(-1)^{k}$, $2\Rea(\overline{x}u)- (-1)^{k}\alpha$
& $\scriptstyle{\tau=\alpha=1,\|E\|\leq \frac{1}{2}}$
\\
\hline
\refstepcounter{rownumber}
\label{r44}  C\ref{r44} &
$
\begin{bsmallmatrix}
0 & 1 \\
1 &  \omega
\end{bsmallmatrix}
$
&
$
\begin{bsmallmatrix}
0 & 1 \\
\tau &  0
\end{bsmallmatrix}
$
&
$\Rea(\overline{x}u)$, $(1-\tau)\Ima(\overline{x}u)$, $1-\tau$,  & $\scriptstyle{0< \tau\leq 1,\omega\in\{0,i\}}$\\
&&& $(1+\tau)\Rea(\overline{y}v)+i(1-\tau)\Ima(\overline{y}v)-(-1)^{k}\omega$ & $\scriptstyle{k\in \mathbb{Z}}$ \\
&&&  $\overline{x}v+\overline{u}y-(-1)^{k}$   &\\
\hline
\refstepcounter{rownumber}
\label{r7}  C\ref{r7}  &
$1\oplus -1$
&
$
\begin{bsmallmatrix}
0 & 1 \\
\tau & 0
\end{bsmallmatrix}
$ 
&        $2\Rea(\overline{y}v)-(-1)^{k}$, $2\Rea(\overline{x}u)+(-1)^{k}$, $1-\tau$ & $\scriptstyle{0<\tau\leq 1, k\in \mathbb{Z}}$
\\
&    &	    & $(1-\tau)\Ima(\overline{y}v)$, $(1-\tau)\Ima(\overline{x}u)$, $\overline{x}v+\overline{u}y$   &  
\\
\hline	
\refstepcounter{rownumber}
\label{r10}  C\ref{r10}  &	
$\alpha\oplus 0$
&
$
\begin{bsmallmatrix}
0 & 1 \\
1 & i
\end{bsmallmatrix}
$ 
&
$\Rea(\overline{y}u)$, $2\Rea(\overline{x}u)+i|u|^{2}-\frac{\alpha}{c}$, & $\scriptstyle{\alpha\in \{0,1\}}$   \\
& 	&   &  $\overline{x}v+\overline{u}y$, $\overline{u}v$, $v^{2}$  &   \\
\hline
 \refstepcounter{rownumber}
\label{r1}  C\ref{r1}  
&
$1 \oplus e^{i\widetilde{\theta}} $
&
$1 \oplus e^{i\theta} $
&
$u^{2},y^{2}$, $|x|^{2}-1$, $|v|^{2}-1$ 
  &   $\scriptstyle{0<\theta<\pi}$, $\scriptstyle{0<\widetilde{\theta}<\pi}$
\\
		\hline	
\refstepcounter{rownumber}
\label{r5}  C\ref{r5} &	
$
\begin{bsmallmatrix}
\alpha & \beta \\
\beta & \omega 
\end{bsmallmatrix}
$ &
$
\begin{bsmallmatrix}
0 & 1 \\
1 & i 
\end{bsmallmatrix}
$ 
& $2\Rea(\overline{x}u)-(-1)^{k}\alpha$, $2\Rea(\overline{y}v)-(-1)^k\Rea (\omega)$ &  $\scriptstyle{k\in \mathbb{Z}}$;$\scriptstyle{\beta=0,-\omega=\alpha\in \{0,1\}}$ \\
&   &  	 & 
$\overline{x}v+\overline{u}y-(-1)^k\beta$, $u^{2}$ , $|v|^2-(-1)^k\Ima (\omega)$ &  $\scriptstyle{\textrm{or } \beta=1,\alpha=0,\omega\in\{0,i\}}$
   \\
\hline
\refstepcounter{rownumber}
\label{r6}  C\ref{r6}  &
$
\begin{bsmallmatrix}
0 & 1 \\
\widetilde{\tau} & 0
\end{bsmallmatrix}
$
&
$
\begin{bsmallmatrix}
0 & 1 \\
\tau & 0
\end{bsmallmatrix}
$  & $\overline{x}u$, \,$\overline{y}v$,\, $\overline{y}u$,\, $\overline{x}v-c^{-1}$ &   $\scriptstyle{0\leq\widetilde{\tau}< 1,0<\tau< 1}$\\
&&& & $\scriptstyle{\textrm{or }\tau=\widetilde{\tau}=0}$ \\
\cline{4-5}
  &   &  &   $c^{-1}-(-1)^{k}$ & $\scriptstyle{0<\widetilde{\tau},\tau< 1}$,\,\,$\scriptstyle{k\in \mathbb{Z}}$ \\   
 \hline
\refstepcounter{rownumber}
\label{r9}  C\ref{r9}  &	
$\alpha\oplus 0$
&
$1\oplus \sigma$
 &
 $|x|^2+\sigma|u|^2-c^{-1}\alpha$, $\overline{x}y+\sigma \overline{u} v$,  $|y|^2+\sigma |v|^2$   & $\scriptstyle{\sigma\in\{1,-1\}}$, $\scriptstyle{\alpha\in \{1,0\}}$
 \\
\hline
\refstepcounter{rownumber}
\label{r99}  C\ref{r99}  &	
$1\oplus \sigma$
&
$1\oplus e^{\theta}$
 &
 $|x|^2+\sigma|u|^2-(-1)^{k}$, $\overline{x}y+\sigma \overline{u} v$  & $\scriptstyle{\sigma\in\{1,-1\}}$, $\scriptstyle{k\in \mathbb{Z}}$;  $\scriptstyle{\sigma=e^{i\theta}}$ \\
&  &   	& $|y|^2+\sigma |v|^2-(-1)^{k}$     & $\scriptstyle{\textrm{or } 0< \theta < \pi , \|E\|\leq \frac{1}{392}}$   \\
\hline
\refstepcounter{rownumber}
\label{r11}  C\ref{r11}  
& 	
$\alpha\oplus 0$ 
&
$1\oplus 0$  
&   
$y^2$, $|x|^2-\alpha$  &  $\scriptstyle{\alpha\in\{0,1\}}$ \\
\hline
	\end{tabular}
	%
\label{table3}
\end{table}	
\normalsize
%
%
%


\begin{remark} 
For calculations of $\mu$, $\nu$ in Lemma \ref{lemapsi1} 
see the proof of the lemma.
\end{remark}

We are ready to state the main results of the paper. 
The proof is 
given in Sec. \ref{proofT}. 
%

%
%
%
\small
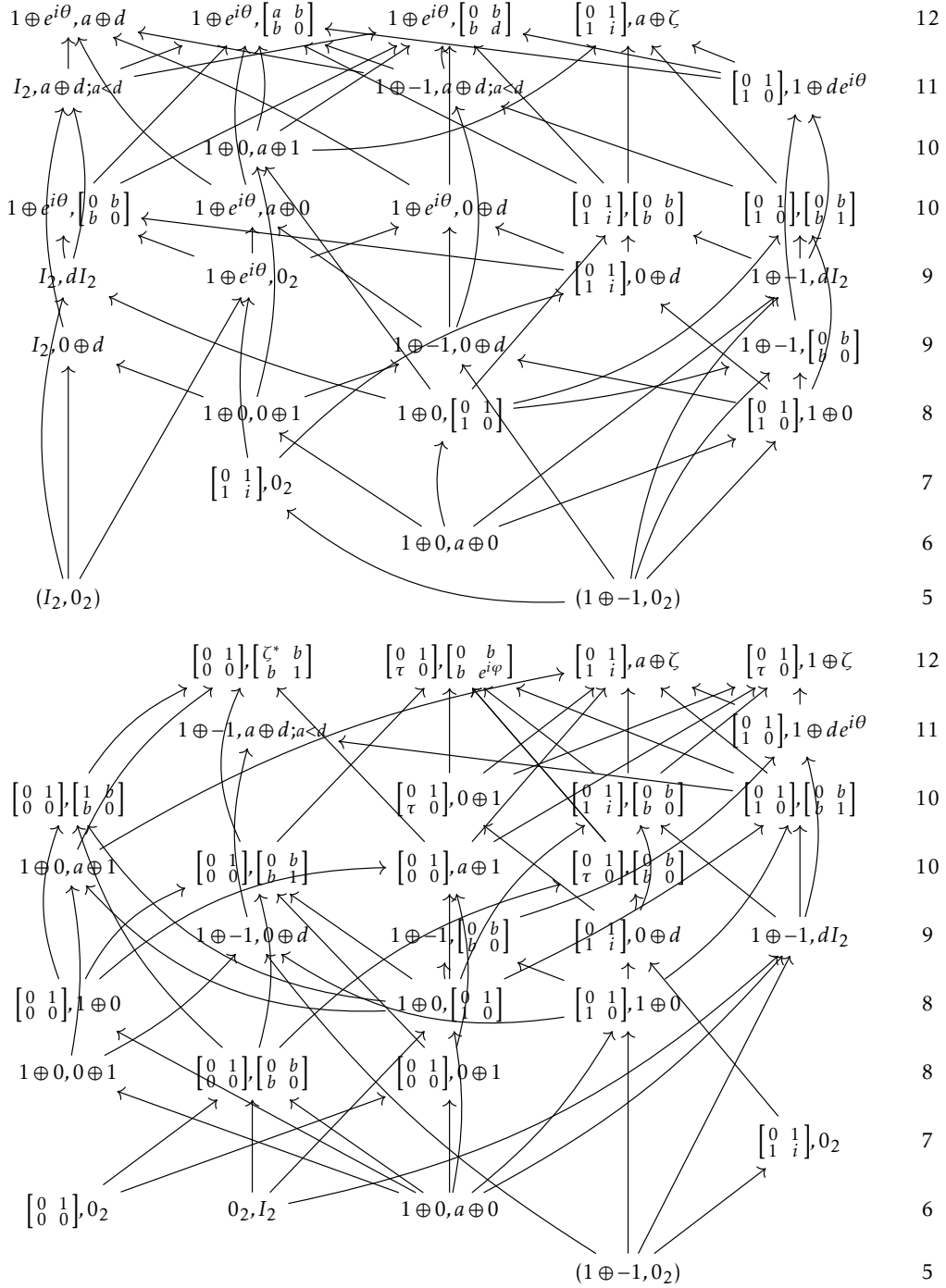
\begin{figure}
\centering
\begin{equation*}
\hspace*{-12mm}
\begin{tikzcd}[row sep=0.8em,column sep=small]
1\oplus e^{i\theta},a\oplus d
&
1\oplus e^{i\theta}, \begin{bsmallmatrix}
a & b\\
b & 0
\end{bsmallmatrix}
&
1\oplus e^{i\theta}, \begin{bsmallmatrix}
0 & b\\
b & d
\end{bsmallmatrix}
&
\begin{bsmallmatrix}
0 & 1\\
1 & i
\end{bsmallmatrix},a\oplus \zeta 
&& 12\\
I_2,a\oplus d; \scriptstyle{a<d} \arrow[ur, ""] \arrow[urr, ""] \arrow[u, ""]
&&
1\oplus -1,a\oplus d; \scriptstyle{a<d} \arrow[u,bend left= 15, ""] \arrow[ul, ""]\arrow[ull, ""]
&&
\begin{bsmallmatrix}
0 & 1\\
1 & 0
\end{bsmallmatrix},1\oplus de^{i\theta} \arrow[ul, ""] \arrow[ull, ""] \arrow[ulll, ""]
& 11
\\
& 
1\oplus 0,a\oplus 1 \arrow[uur, ""] \arrow[uu,bend right= 15, ""] \arrow[uurr,bend right= 20, ""] 
&
&
&
&
10\\
1\oplus e^{i\theta}, 
\begin{bsmallmatrix}
0 & b\\
b & 0
\end{bsmallmatrix} \arrow[uuur, ""] \arrow[uuurr, ""] 
& 
1\oplus e^{i\theta}, 
a\oplus 0 \arrow[uuu,bend left= 15, ""] \arrow[uuul,bend left= 15, ""]
& 
1\oplus e^{i\theta}, 
0\oplus d \arrow[uuu, ""] \arrow[uuull,bend right= 5, ""]
& 
\begin{bsmallmatrix}
0 & 1\\
1 & i
\end{bsmallmatrix}, 
\begin{bsmallmatrix}
0 & b\\
b & 0
\end{bsmallmatrix}  \arrow[uuu, ""]\arrow[uuul, ""]\arrow[uuull, ""]
& 
\begin{bsmallmatrix}
0 & 1\\
1 & 0
\end{bsmallmatrix} ,\begin{bsmallmatrix}
0 & b\\
b & 1
\end{bsmallmatrix} \arrow[uuul, ""]\arrow[uull, ""]
& 10\\
I_2, dI_2 \arrow[u,bend left= 10, ""]\arrow[uuu,bend right= 15, ""]
& 
1\oplus e^{i\theta}, 0_2 \arrow[u, ""]\arrow[ul, ""]\arrow[ur, ""]
& 
&
\begin{bsmallmatrix}
0 & 1\\
1 & i
\end{bsmallmatrix} ,0\oplus d \arrow[u, ""]\arrow[ul, ""] \arrow[ulll, ""]
&
1\oplus -1, dI_2 \arrow[ul, ""]\arrow[uuu,bend right= 25, ""]\arrow[u, ""]
&   9 \\
I_2,0\oplus d \arrow[uuuu,bend left= 15, ""]   
& 
&
1\oplus -1,0\oplus d \arrow[uul, ""] \arrow[uu, ""] \arrow[uuuu,bend right= 20, ""]
&
& 
1\oplus -1,
\begin{bsmallmatrix}
0 & b\\
b & 0
\end{bsmallmatrix} \arrow[uuuu,bend left= 10, ""]
& 9 \\
& 
1\oplus 0,0\oplus 1 \arrow[ul, ""] \arrow[ur, ""] \arrow[uuuu,bend right= 15, ""] 
& 
1\oplus 0,
\begin{bsmallmatrix}
0 & 1\\
1 & 0
\end{bsmallmatrix} 
\arrow[uull,bend left= 5, ""]\arrow[uuuul, ""]\arrow[uuur, ""]\arrow[uuurr,bend right= 20, ""]\arrow[urr,bend right= 5, ""]
& 
&  
\begin{bsmallmatrix}
0 & 1\\
1 & 0
\end{bsmallmatrix},1\oplus 0 \arrow[uuu,bend right= 25,""] \arrow[uul,""]\arrow[u,""]\arrow[ull,""]
& 8 \\
 & 
 \begin{bsmallmatrix}
 0 & 1\\
 1 & i
\end{bsmallmatrix}, 0_2 \arrow[uuurr,bend left= 15,""] \arrow[uuu,bend left= 10,""]
  & 
& & & 7\\
& &  1\oplus 0,a\oplus 0  \arrow[uul,""] \arrow[uurr,""] \arrow[uu,bend left= 15,""] \arrow[uuuurr,""]
& & & 6\\
(I_2,0_2) \arrow[uuuu,""] \arrow[uuuuur,""] \arrow[uuuuu,bend left= 15,""]& & & (1\oplus -1,0_2) \arrow[uuuur,bend left= 15,""]\arrow[uull,bend left= 20,""]\arrow[uuur,""] \arrow[uuuul,""] \arrow[uuuuur,bend left= 20,""] & & 5\\
\hspace{2mm}
%
%
%
&
\begin{bsmallmatrix}
0 & 1\\
0 & 0
\end{bsmallmatrix}, \begin{bsmallmatrix}
\zeta^{*} & b\\
b & 1
\end{bsmallmatrix}
&
\begin{bsmallmatrix}
0 & 1\\
\tau & 0
\end{bsmallmatrix}, \begin{bsmallmatrix}
0 & b\\
b & e^{i\varphi}
\end{bsmallmatrix}
&
\begin{bsmallmatrix}
0 & 1\\
1 & i
\end{bsmallmatrix},a\oplus \zeta 
&
\begin{bsmallmatrix}
0 & 1\\
\tau & 0
\end{bsmallmatrix},1\oplus \zeta 
& 12\\
& 1\oplus -1,a\oplus d; \scriptstyle{a<d} &
&&
\begin{bsmallmatrix}
0 & 1\\
1 & 0
\end{bsmallmatrix},1\oplus de^{i\theta} \arrow[ul, ""]  \arrow[u, ""]
& 11
\\
\begin{bsmallmatrix}
0 & 1\\
0 & 0
\end{bsmallmatrix}, \begin{bsmallmatrix}
1 & b\\
b & 0
\end{bsmallmatrix} \arrow[uur,bend left= 15, ""] 
& 
&
\begin{bsmallmatrix}
0 & 1\\
\tau & 0
\end{bsmallmatrix}, 0\oplus 1 \arrow[uu, ""] \arrow[uur, ""]  \arrow[uurr, ""] 
&
\begin{bsmallmatrix}
0 & 1\\
1 & i
\end{bsmallmatrix}, \begin{bsmallmatrix}
0 & b\\
b & 0
\end{bsmallmatrix} \arrow[uu, ""]  \arrow[uul, ""] \arrow[uur, ""] 
&
\begin{bsmallmatrix}
0 & 1\\
1 & 0
\end{bsmallmatrix}, \begin{bsmallmatrix}
0 & b\\
b & 1
\end{bsmallmatrix} \arrow[uull, ""]  \arrow[uul, ""] \arrow[ulll, ""] 
&
10\\
1\oplus 0,a\oplus 1
\arrow[uuur,bend left= 15, ""] \arrow[uuurrr,bend left= 10, ""]
& 
\begin{bsmallmatrix}
0 & 1\\
0 & 0
\end{bsmallmatrix}, 
\begin{bsmallmatrix}
0 & b\\
b & 1
\end{bsmallmatrix}  \arrow[uuur, ""]\arrow[uuu,bend left= 25, ""]
& 
\begin{bsmallmatrix}
0 & 1\\
0 & 0
\end{bsmallmatrix},  
a\oplus 1 \arrow[uuul, ""] \arrow[uuur, ""]  \arrow[uuurr, ""]
& 
\begin{bsmallmatrix}
0 & 1\\
\tau & 0
\end{bsmallmatrix}, 
\begin{bsmallmatrix}
0 & b\\
b & 0
\end{bsmallmatrix}  \arrow[uuul, ""]\arrow[uuul, ""]  
& 
& 10\\
& 
1\oplus -1,0\oplus d  \arrow[uuu,bend left= 15, ""]
&
1\oplus -1,
\begin{bsmallmatrix}
0 & b\\
b & 0
\end{bsmallmatrix}
\arrow[uuurr,bend right= 15, ""]
&
\begin{bsmallmatrix}
0 & 1\\
1 & i
\end{bsmallmatrix} ,0\oplus d \arrow[u,bend right= 15, ""]\arrow[uu,bend right= 25, ""]\arrow[uul, ""]
& 
1\oplus -1,dI_2 \arrow[uuu,bend right= 15, ""]\arrow[uu, ""]\arrow[uul, ""]
& 9 \\
\begin{bsmallmatrix}
0 & 1\\
0 & 0
\end{bsmallmatrix} ,1\oplus 0 \arrow[uuu,bend left= 20, ""]\arrow[uur,bend left= 20, ""]\arrow[uurr,bend left= 20, ""]
& 
&
1\oplus 0,
\begin{bsmallmatrix}
0 & 1\\
1 & 0
\end{bsmallmatrix}
\arrow[uull,bend left= 25, ""]\arrow[uuull,bend left= 25, ""]\arrow[uul, ""]\arrow[u,bend left= 10, ""]\arrow[uu, ""]\arrow[uuur,bend left= 18, ""]\arrow[uuurr,bend right= 5, ""]
&
\begin{bsmallmatrix}
0 & 1\\
1 & 0
\end{bsmallmatrix}, 
1\oplus 0 \arrow[uuur,bend right= 15, ""]\arrow[u, ""]\arrow[ul, ""]\arrow[ull,bend left= 21, ""]
& 
& 8 \\
1\oplus 0,0\oplus 1 
\arrow[uuu,bend right= 10, ""] \arrow[uur,bend right= 10, ""]
&
\begin{bsmallmatrix}
0 & 1\\
0 & 0
\end{bsmallmatrix}, 
\begin{bsmallmatrix}
0 & b\\
b & 0
\end{bsmallmatrix}  \arrow[uuuul,bend left= 15, ""]\arrow[uuu,bend right= 15, ""]\arrow[uuurr,bend left= 15, ""]
&
\begin{bsmallmatrix}
0 & 1\\
0 & 0
\end{bsmallmatrix}, 
0\oplus 1 \arrow[uuul, ""]\arrow[uuu,bend right= 15, ""]
&&& 8 \\
& & & &
\begin{bsmallmatrix}
0 & 1\\
1 & i
\end{bsmallmatrix}, 
0_2 \arrow[uuul, ""] 
& 7\\
\begin{bsmallmatrix}
0 & 1\\
0 & 0
\end{bsmallmatrix}, 
0_2 
\arrow[uurr, ""]\arrow[uur, ""]
&
0_2,I_2 \arrow[uu,""] 
\arrow[uuur,""] \arrow[uuuurrr,bend right= 15,""]
& 
1\oplus 0,a\oplus 0  \arrow[uu,""] \arrow[uuu,bend right= 10,""] \arrow[uuur,bend right= 10,""] \arrow[uuull,""]\arrow[uul,""] \arrow[uull,""]\arrow[uuuurr,bend right= 10,""]
& 
& & 6\\
& & &  (1\oplus -1,0_2) \arrow[uur,""] \arrow[uuuu,""] \arrow[uuuuur,""] \arrow[uuuuull,bend left= 10,""] & & 5
\end{tikzcd}
\end{equation*}
\caption{
The paths 
not mentioned in Theorem \ref{izrek} (\ref{izrek3}), (\ref{izrek2}), (\ref{izrek1}), (\ref{1izrek1}); $a,b,d>0$, $\zeta\in \mathbb{C}$, $\zeta^{*}\in \mathbb{C}^{*}$, $\tau\in (0,1)$, $\theta\in (0,\pi)$, $\varphi\in [0,\pi)$. 
} 
\label{cgraph2}
\end{figure}
%
%
\normalsize

\begin{theorem}\label{izrek}
Let bundles with normal forms of types from Lemma \ref{lemalist} be vertices in the closure graph for the action $\Psi$ in (\ref{aAB}). The graph contains precisely the paths described by the following statements:
%
\begin{enumerate}
\item \label{izrek3} There is a path from $(0_2,0_2)$ to any bundle. There exist paths from $\Bun_{\Psi}(1\oplus 0,0_2)$ to all bundles, except to 
$\Bun_{\Psi}(0_2,B)$ for $B\in \mathbb{C}_S^{2\times 2}$.
%
%
\item \label{izrek2} 
There exist paths from $\Bun_{\Psi}(0_2,1\oplus 0)$ 
to all bundles, except to 
$\Bun_{\Psi}(A,0_2)$ for $A\in \mathbb{C}^{2\times 2}$. 
%
%
%
\item \label{izrek1} From every bundle, except $\Bun_{\Psi}(1\oplus e^{i\theta},B)$ for $0\leq \theta<\pi$, $B\in \mathbb{C}_S^{2\times 2}$, there exists a path to the bundle $\Bun_{\Psi}\big(\begin{bsmallmatrix}
0 & 1\\
\tau & 0
\end{bsmallmatrix},\begin{bsmallmatrix}
e^{i\varphi} & b\\
b & \zeta
\end{bsmallmatrix}\big)$ with $0\leq \varphi<\pi$, $0<b$, $\zeta\in \mathbb{C}$.
\item \label{1izrek1}
From every bundle, except 
$\Bun_{\Psi}\big(\begin{bsmallmatrix}
0 & 1\\
\tau & 0
\end{bsmallmatrix},B\big)$ for $0\leq \tau < 1$, $B\in \mathbb{C}_S^{2\times 2}$, there exists a path to the bundle 
$\Bun_{\psi}\big(1\oplus e^{i\theta},\begin{bsmallmatrix}
a & \zeta^{*}\\
\zeta^{*} & d
\end{bsmallmatrix}\big)$ with $0\leq \theta<\pi$, $\zeta^{*}\in \mathbb{C}^{*}$ and
$a,d>0$.
\item \label{izrek5} All other paths that are
not mentioned in (\ref{izrek3}), (\ref{izrek2}), (\ref{izrek1}), (\ref{1izrek1}) are noted in Figure \ref{cgraph2}. (Dimensions of bundles are indicated on the right.)
\end{enumerate}  
\end{theorem}


\begin{remark}
We prove $(\widetilde{A},\widetilde{B})\to (A,B)$ by finding $(A(s),B(s))\in \Bun (A,B)$, $c(s)\in S^{1}$, $P(s)\in GL_2(\mathbb{C})$ such that
$c(s) (P(s))^*A(s)P(s)\to\widetilde{A}$ and $(P(s))^TB(s)P(s)\to\widetilde{B}$ as $s\to 0$.
It often includes tedious calculations and intriguing estimates; but since these do not seem to be of any special interest we omit them and thus shorten the proof significantly. 
When $(\widetilde{A},\widetilde{B})\not \to (A,B)$, then a lower bound for the distance from $(\widetilde{A},\widetilde{B})$ to $\Bun_{\Psi}(A,B)$ will be provided as part of the proof of Theorem \ref{izrek}. Note that if $\dim \Bun_{\Psi}(A,B)\leq \dim \Orb_{\Psi}(\widetilde{A},\widetilde{B})$, then it implies $(\widetilde{A},\widetilde{B})\not\to (A,B)$ (\cite[Propositions 2.8.13,2.8.14]{RAG}), but it gives no estimate on the distance from 
$(\widetilde{A},\widetilde{B})$ to $\Bun_{\Psi}(A,B)$.
\end{remark}

The following result is an immediate consequence of Theorem \ref{izrek} (see \cite[Corollary 3.8]{TS2} for an analogous result in the case of the closure graph for orbits).

\begin{corollary}\label{pospert}
Let $M$ be a compact real $4$-manifold embedded $\mathcal{C}^2$-smoothly in a complex $3$-manifold $X$ and let $p_1,\ldots,p_k\in M$ be precisely (all) its complex points with the corresponding normal forms up to quadratic terms $(A_1,B_1),\ldots,(A_k,B_k)\in \mathbb{C}^{n\times n}\times \mathbb{C}_S^{n\times n}$. Assume that $M'$ is a deformation of $M$ obtained by a smooth isotopy of $M$, and let $p\in M'$ be a complex point with the corresponding quadratic normal form $(A,B)$. If the isotopy is sufficiently $\mathcal{C}^2$-small then $p$ is arbitrarily close to some $p_{j_0}$, $j_{0}\in \{1,\ldots,k\}$, and $(A_{j_{0}},B_{j_{0}})\to(A,B)$ is a path in the closure graph for bundles for the action (\ref{aAB}).
\end{corollary}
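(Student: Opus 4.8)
The plan is to deduce Corollary \ref{pospert} from Theorem \ref{izrek} by a compactness-and-continuity argument essentially identical in structure to \cite[Corollary 3.8]{TS2}, the only new ingredient being that we now carry the information of the \emph{bundle} rather than the orbit. First I would recall that a $\mathcal{C}^2$-small isotopy of $M$ induces a $\mathcal{C}^2$-small perturbation of the defining equation \eqref{BasForm1} in suitable local coordinates near each complex point, and in particular the quadratic coefficient pair $(A,B)$ of a complex point $p$ of the perturbed manifold $M'$ depends continuously (in the max norm on $\mathbb{C}^{n\times n}\times\mathbb{C}^{n\times n}_S$) on the isotopy parameter, as the complex points of $M'$ themselves vary continuously. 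Since $M$ is compact and its complex points $p_1,\dots,p_k$ are isolated, for a sufficiently small isotopy every complex point $p$ of $M'$ lies in a prescribed small neighbourhood of some $p_{j_0}$; this part is purely the transversality/implicit-function-theorem argument already used in the orbit case and may be quoted verbatim.

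Second, having fixed $j_0$, the task reduces to the following matrix-theoretic statement: if a sequence $(A^{(m)},B^{(m)})$ of quadratic parts of complex points of perturbed manifolds converges to $(A_{j_0},B_{j_0})$, and if for each $m$ the pair $(A^{(m)},B^{(m)})$ lies in a fixed bundle $\mathcal{V}_m$ of the action \eqref{aAB}, then after passing to a subsequence one may assume all $\mathcal{V}_m$ equal a single bundle $\mathcal{V}$, and then $(A_{j_0},B_{j_0})\in\overline{\mathcal{V}}$, i.e.\ there is a path from the bundle of $(A_{j_0},B_{j_0})$ to $\mathcal{V}$ in the closure graph. The subsequence extraction is legitimate because Table \ref{table2} exhibits only finitely many bundle \emph{types}; within a fixed type one further argues that the defining continuous invariants (such as the sign of the determinant $\det\begin{bsmallmatrix}A&\overline B\\B&\overline A\end{bsmallmatrix}$, ranks, Hermitian/non-Hermitian nature of $A$, etc.) are locally constant, so the bundle containing the perturbed pair is eventually constant along the sequence. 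Then the convergence $c_mP_m^*A^{(m)}_0P_m\to A_{j_0}$, $P_m^TB^{(m)}_0P_m\to B_{j_0}$ (with $(A^{(m)}_0,B^{(m)}_0)$ a normal form representing $\mathcal{V}$) together with Lemma \ref{pathlema}\eqref{pathlema2}\eqref{pathlema20} yields the desired path $(A_{j_0},B_{j_0})\to(A^{(m)}_0,B^{(m)}_0)$, which is exactly a path in the closure graph for bundles from the bundle of $(A_{j_0},B_{j_0})$ to $\mathcal{V}$.

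The step I expect to be the main obstacle is justifying that the bundle of the perturbed pair stabilises along a subsequence, i.e.\ that one cannot be forced to walk through an infinite chain of ever-different bundles. Unlike for orbits — where the orbit of a point already determines membership in the closure of a larger orbit — bundles have the subtle feature (noted explicitly in the excerpt) that $\widetilde V$ lying in the closure of $\mathcal V$ does \emph{not} imply the whole bundle of $\widetilde V$ does, and the closure graph depends on the chosen stratification. What saves us is precisely the finiteness of Table \ref{table2}: the collection of bundles is finite, so any infinite sequence of perturbed pairs has a subsequence lying in a single bundle, and for that fixed bundle Theorem \ref{izrek} has already catalogued whether or not the bundle of $(A_{j_0},B_{j_0})$ maps to it. One then simply checks that at least one such bundle $\mathcal V$ with $(A_{j_0},B_{j_0})\to\mathcal V$ is actually hit by perturbations arbitrarily close to $(A_{j_0},B_{j_0})$ — which is automatic, since $(A_{j_0},B_{j_0})$ itself lies in some bundle and that bundle trivially satisfies $(A_{j_0},B_{j_0})\to(A_{j_0},B_{j_0})$ by reflexivity. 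Assembling these observations gives the corollary; all remaining estimates are routine and parallel to \cite{TS2}.
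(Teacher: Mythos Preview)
The paper gives no proof beyond declaring the result an ``immediate consequence'' of Theorem~\ref{izrek} and pointing to \cite[Corollary~3.8]{TS2} for the orbit analogue; your first two paragraphs are exactly the natural fleshing-out of that claim, and they are correct. The compactness/transversality localisation, continuity of the quadratic data, finiteness of bundle types in Table~\ref{table2}, and the invocation of Lemma~\ref{pathlema}\,(\ref{pathlema2})(\ref{pathlema20}) together yield the desired path.

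Your third paragraph, though, is muddled and should be dropped. The reflexivity remark is a non-sequitur: the corollary concerns \emph{every} sufficiently small perturbation, not the mere existence of one landing in a reachable bundle. The genuine subtlety you raise---that a single point lying in $\overline{\mathcal V}$ need not force its whole bundle into $\overline{\mathcal V}$---is already handled by your appeal to Lemma~\ref{pathlema}\,(\ref{pathlema2})(\ref{pathlema20}), which is stated as a biconditional; once you have the convergent sequence in a fixed bundle, that lemma \emph{is} the path. The paper also offers a direct route, recorded in the Remark following the corollary: the proof of Theorem~\ref{izrek} supplies explicit positive lower bounds on the distance from the normal form $(A_{j_0},B_{j_0})$ to each bundle $\mathcal V$ with $(A_{j_0},B_{j_0})\not\to\mathcal V$, and the minimum of these finitely many bounds gives the smallness threshold on the isotopy without any contradiction argument. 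Finally, your ``locally constant invariants'' aside is unnecessary---finiteness of the bundle list plus pigeonhole already produces the constant-bundle subsequence.
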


\begin{remark}
The lower bounds for the distances from normal forms to other bundles 
give the estimate how small the isotopy 
in the corollary needs to be.
\end{remark}


%
%
%
%
%
\section{Proof of Lemma \ref{lemapsi2} and Lemma \ref{lemapsi1}}\label{proofL}


In this section we prove Lemma \ref{lemapsi2} and Lemma \ref{lemapsi1}. We start with a technical lemma which is an adaptation of \cite[Lemma 4.1]{TS2} to the case of bundles. 


\begin{lemma}\label{lemadet}
\begin{enumerate}
Suppose $P\in GL_2(\mathbb{C})$, $\widetilde{A},A,E,\widetilde{B},B,F\in \mathbb{C}^{2\times 2}$, $c\in S^{1}$. 
\item \label{lemadeta} If $cP^*AP=\widetilde{A}+E$,
$\|E\|\leq \min \{\tfrac{|\det \widetilde{A}|}{8\|\widetilde{A}\|+4},1\}$ it then follows that
\begin{align}\label{PAE}
&\big|\sqrt{\det A}\big|\,|\det P|=\bigl|\sqrt{\det \widetilde{A}}\bigr|+r, \qquad 
\small{
|r|\leq \left\{
\begin{array}{ll}
\tfrac{\|E\|(4\|\widetilde{A}\|+2)}{|\det \widetilde{A}|}, & \det \widetilde{A}\neq 0\\
\sqrt{\|E\|(4\|\widetilde{A}\|+2)}, & \det \widetilde{A}= 0
\end{array}.\right.}
\end{align}
\normalsize
Moreover, if $A,\widetilde{A}\in GL_2(\mathbb{C})$ and $\Delta:=\arg \bigl(\tfrac{\det \widetilde{A}}{\det A}\bigr)$ we have 
\begin{align}
\label{cE}
&c= (-1)^{k}e^{ \frac{i\Delta}{2}}+g, \quad c^{-1}= (-1)^{k}e^{-\frac{i\Delta}{2}}+\overline{g}, \qquad k\in \mathbb{Z},\quad
|g|\leq \tfrac{\|E\|(8\|\widetilde{A}\|+4)}{|\det \widetilde{A}|}.
\end{align}
\item \label{lemadetb} If $P^TBP=\widetilde{B}+F$,
$\|F\|\leq \min \{\tfrac{|\det \widetilde{B}|}{4\|\widetilde{B}\|+2},1\}$, then
\begin{align*}
\sqrt{\det B} \,\det P=\sqrt{\det \widetilde{B}}+r, \qquad 
\small{
|r|\leq \left\{\begin{array}{ll}
\tfrac{\|F\|(4\|\widetilde{B}\|+2)}{|\det \widetilde{B}|}, & \det \widetilde{B}\neq 0\\
\sqrt{\|F\|(4\|\widetilde{B}\|+2)}, & \det \widetilde{B}= 0
\end{array}\right..}
\end{align*}
\normalsize
\item \label{pathlema2ii} 
Let further $A,\widetilde{A}\in GL_2(\mathbb{C})$, $\|E\|\leq \min\{1,||\widetilde{A}^{-1}||^{-1},\frac{|\det\widetilde{A}|}{8\|\widetilde{A}\|+4}\}$ and $cP^*AP=\widetilde{A}+E$, $P^TBP=\widetilde{B}+F$.
It then implies that
\begin{align*}
&|\det \widetilde{A}\det B|=|\det \widetilde{B}\det A|+r,\\
&|r|\leq \max\{\|E\|,\|F\|\}\tfrac{|\det A|}{|\det \widetilde{A}|}
\big(4\max\{\|\widetilde{A}\|,\|\widetilde{B}\|,|\det\widetilde{A}|,|\det \widetilde{B}|\}+2\big)^{2}.
\end{align*}
%
Moreover, if in addition $B,\widetilde{B}$ are nonsingular and $|\det A|=|\det \widetilde{A}|=\|\widetilde{A}\|=1$, 
$\|E\|,\|F\|\leq \frac{|\det \widetilde{B}|}{4(4\max\{1,\|\widetilde{B}\|,\det \widetilde{B}\}+2)^{2}}$, $\Gamma:=\arg \big(\frac{\det \widetilde{B}}{\det B}\big)$, then we have
\[
\det P= (-1)^{l}e^{i\frac{\Gamma}{2}}+p,\qquad l\in \mathbb{Z}, \quad |p|\leq \|F\|\tfrac{8\|\widetilde{B}\|+4}{\sqrt{3}|\det \widetilde{B}|}.
\]
\end{enumerate}
\end{lemma}

\vspace{-4mm}

\begin{proof}
For $\xi,h\in \mathbb{C}$, $\zeta\in \mathbb{C}^{*}$
we have $\xi\zeta^{-1}=1+\frac{h}{\zeta}=|1+\frac{h}{\zeta}|e^{i \psi}$ with $|\frac{h}{\zeta}|\leq \frac{1}{2}$, hence $\psi \in (-\tfrac{\pi}{2},\tfrac{\pi}{2})$ and 
$|\sin \psi|=\bigl|\Ima \big(\tfrac{1+\frac{h}{\zeta}}{|1+\frac{h}{\zeta}|}\big)\bigr|
\leq 
\tfrac{|\Ima \frac{h}{\zeta}|}{|1+\frac{h}{\zeta}|} 
\leq \tfrac{|\frac{h}{\zeta}|}{1-|\frac{h}{\zeta}|}\leq \tfrac{2|h|}{|\zeta|}$.  
Thus
\begin{equation}\label{ocenah}
\xi=\zeta+h, \,|h|\leq \tfrac{|\zeta|}{2}\neq 0 \quad\textrm{implies}\quad \arg (\xi)-\arg(\zeta)=\psi \in (-\tfrac{\pi}{2},\tfrac{\pi}{2}),\,\,|\sin \psi|\leq 2|\tfrac{h}{\zeta}|.
\end{equation}
%

Estimating the absolute values of the entries of the matrices by the max norm of the matrices, and by slightly simplifying, we obtain that for any $X,D\in \mathbb{C}^{2\times 2}$:
\begin{align}\label{detxe}
\big||\det (X+D)|-|\det X|\big|& \leq \big|\det (X+D)-\det X \big|\leq 
\|D\|\big (4\|X\| +2\|D\|\big).
\end{align}

Furthermore, we apply the determinant to $cP^{*}AP=\widetilde{A}+E$, $Q^{T}BQ=\widetilde{B}+F$ to get 
\begin{equation}\label{detcP}
c^2|\det P|^2\det A= \det (\widetilde{A}+E), \qquad (\det Q)^2\det B= \det (\widetilde{B}+F).
\end{equation}
Assuming $\|E\|,\|F\|\leq 1$ and using (\ref{detxe}) for $X=\widetilde{A}$, $D=E$ and $X=\widetilde{B}$, $D=F$ gives
%
\begin{align}\label{detPQ}
&|\det A|\,|\det P|^{2}=|\det \widetilde{A}|+p, \qquad |p|\leq \|E\|(4\|\widetilde{A}\|+2),\\
&\det B(\det Q)^{2}=\det \widetilde{B}+q, \qquad |q|\leq \|E\|(4\|\widetilde{B}\|+2),\nonumber
\end{align}
%
respectively. We observe another simple fact. If $|s|\leq 1$ then there exists $s'$ so that
\begin{equation}\label{ocenakoren}
\sqrt{1+s}=(-1)^{l}(1+s'), \qquad l\in \mathbb{Z}, \,\,\Rea (s')\geq -1  , \,\, |s'|\leq |s|.
\end{equation}
%
We apply (\ref{ocenakoren}) to (\ref{detPQ}) for $\|E\|\leq \tfrac{|\det \widetilde{A}|}{4\|\widetilde{A}\|+2}$ and $\|F\|\leq \tfrac{|\det \widetilde{B}|}{4\|\widetilde{B}\|+2}$ to obtain (\ref{PAE}) and (\ref{lemadetb}).

The right-hand side of (\ref{detxe}) for $X$ nonsingular and $D$ with $\|D\|\leq 1$ leads to
\begin{equation}\label{XEf}
|\tfrac{\det (X+D)}{\det (X)}-1|                                                \leq \tfrac{\|D\|(4\|X\|+2)}{|\det X|}.
\end{equation} 
By assuming $\|E\|\leq \tfrac{|\det \widetilde{A}|}{8\|\widetilde{A}\|+4}$ and applying (\ref{ocenah}) to (\ref{XEf}) for $X=\widetilde{A}$, $D=E$ we obtain 
\begin{equation}\label{detex2}
\psi=\arg \big(\tfrac{\det (\widetilde{A}+E)}{\det \widetilde{A}}\big)\in (-\tfrac{\pi}{2},\tfrac{\pi}{2}),\qquad|\sin \psi |
\leq \tfrac{\|E\|(8\|\widetilde{A}\|+4)}{|\det \widetilde{A}|}. 
\end{equation}
From (\ref{detcP}) we get
\begin{equation}\label{cP}
c^2|\det P|^2=\tfrac{\det (\widetilde{A}+E)}{\det A}=\tfrac{\det (\widetilde{A}+E)}{\det \widetilde{A}}\tfrac{\det \widetilde{A}}{\det A}
\end{equation}
%
and it follows that
$c=(-1)^{k}e^{i(\frac{\Delta}{2}+\frac{\psi}{2})}$, $k\in \mathbb{Z}$, $\Delta=\arg \bigl(\frac{\det \widetilde{A}}{\det A}\bigr)$.
Using $e^{i\frac{\psi}{2}}=1+2i(\sin \frac{\psi}{4})e^{i\frac{\psi}{4}}$ and 
$2|\sin \frac{\psi}{4}|\leq |\frac{\psi}{2}|\leq  |\sin \psi | 
$ for $\psi\in (-\tfrac{\pi}{2},\tfrac{\pi}{2})$, we deduce (\ref{cE}).

We multiply (\ref{detcP}) for $P=Q$ by $\det \widetilde{B}$ and $\det \widetilde{A}$. By comparing the moduli of the expressions, and assuming $\|E\|\leq||\widetilde{A}^{-1}||^{-1}$ (hence $\det(\widetilde{A}+E)\neq 0$), 
we get
\begin{equation}\label{eqdet}
|\det B||\det \widetilde{A}|=|\det A|\tfrac{|\det \widetilde{A}||\det (\widetilde{B}+F)|}{|\det(\widetilde{A}+E)|}.
\end{equation}
Set $d_{X,D}:=|\det (X+D)|-|\det (X)|$ and apply (\ref{detxe}) for $X=\widetilde{A}$, $D=E$ and $X=\widetilde{B}$, $D=F$: 
\begin{align*}
\big|\tfrac{|\det \widetilde{A}||\det (\widetilde{B}+F)|}{|\det(\widetilde{A}+E)|}-|\det \widetilde{B}|\big|=\big|\tfrac{d_{\widetilde{B},F}|\det \widetilde{A}|-d_{\widetilde{A},E}|\det \widetilde{B}|}{d_{\widetilde{A},E}+|\det(\widetilde{A})|} \big|\leq 
\tfrac{|\det \widetilde{B}|\|E\|\big (4\|\widetilde{A}\| +2\big)+|\det \widetilde{A}|\|F\|\big (4\|\widetilde{B}\| +2\big)}{|\det \widetilde{A}|-\|E\|\big (4\|\widetilde{A}\| +2\big)},
\end{align*}
provided that $\|E\|\leq
\min\big\{||\widetilde{A}^{-1}||^{-1},\tfrac{|\det\widetilde{A}|}{8\|\widetilde{A}\|+4}\big\}$.
We combine it with (\ref{eqdet}):
\vspace{-1mm}
\begin{align}\label{ocenaABdet}
&\big||\det \widetilde{A}\det B|-|\det \widetilde{B}\det A|\big|
=|\det A| \big| \tfrac{|\det \widetilde{A}||\det (\widetilde{B}+F)|}{|\det(\widetilde{A}+E)|}-|\det \widetilde{B}|       \big|  \\
\quad &\leq \tfrac{|\det A|}{|\det \widetilde{A}|}\max\{\|E\|,\|F\|\}4\max\big\{|\det \widetilde{A}|,|\det \widetilde{B}|\big\}\big(4\max\{\|\widetilde{A}\|,\|\widetilde{B}\|\} +2\big).\nonumber
%
\end{align}
Further, let $B,\widetilde{B}$ be nonsingular and $|\det A|=|\det \widetilde{A}|=\|\widetilde{A}\|=1$, $\|F\|\leq\{\tfrac{|\det \widetilde{B}|}{4\|\widetilde{B}\|+2},1\}$, $r:=|\det B|-|\det \widetilde{B}|$. Applying (\ref{XEf}) for $X=\widetilde{B}$, $D=F$ and 
(\ref{detcP}) for $Q=P$ yields 
\[
(\det P)^{2}=\tfrac{\det (\widetilde{B}+F)}{\det \widetilde{B}}\tfrac{\det \widetilde{B}}{\det B}= e^{i\Gamma}\big(1-\tfrac{r}{|\det\widetilde{B}|+r}\big)(1+\epsilon'),
\quad \Gamma=\arg \big(\tfrac{\det{\widetilde{B}}}{\det B}\big), \, |\epsilon'|\leq\|F\|\tfrac{4\|\widetilde{B}\|+2}{|\det \widetilde{B}|}.
\]
Provided that $\|E\|,\|F\|\leq \frac{|\det \widetilde{B}|}{4(4\max\{1,\|\widetilde{B}\|,\det \widetilde{B}\}+2)^{2}}$ we use (\ref{ocenaABdet}) to assure $|r|\leq \frac{|\det \widetilde{B}|}{4}$ (hence $|1-\frac{r}{|\det \widetilde{B}|+r}|\leq \frac{4}{3}$).
By applying (\ref{ocenakoren}) we complete the proof of (\ref{pathlema2ii}). 
\end{proof}

\vspace{-1mm}

We proceed with a simple proof of Lemma \ref{lemapsi2}.

\vspace{-1mm}

\begin{proof}[Proof of Lemma \ref{lemapsi2}.]
The closure graph for $2\times 2$ symmetric matrices is obtained by an easy and straightforward calculation. 

We write the matrix equation $P^{T} AP=F+\widetilde{B}$ for $B=\begin{bsmallmatrix}
0 & b\\
b & d
\end{bsmallmatrix}$ componentwise:
\vspace{-1mm}
\begin{align}\label{eqBFbasic1}
&2bux+du^2=\widetilde{a}+\epsilon_1\nonumber \\
&bvx+buy+duv=\widetilde{b}+\epsilon_2\\ 
&2byv+dv^2=\widetilde{d}+\epsilon_4. \nonumber
\end{align}
By adding and subtracting $b\det P=b(vx-uy)$ from the second equation 
yields
\vspace{-1mm}
\begin{equation}\label{eq2bv}
2bvx+duv=b\det P+\widetilde{b}+\epsilon_2, \qquad 
2buy+duv=\widetilde{b}+\epsilon_2-b\det P.
\end{equation}
We multiply the first (the second) equation of (\ref{eq2bv}) by $u$ (by $v$) and compare it with the first (the last) equation of (\ref{eqBFbasic1}), multiplied by $v$ (by $u$): 
\begin{equation}\label{ubv}
u(b\det P+\widetilde{b}+\epsilon_2)=v(\widetilde{a}+\epsilon_4), \qquad v(-b\det P+\widetilde{b}+\epsilon_2)=u(\widetilde{d}+\epsilon_4).
\end{equation}
For $b=0$ we obtain (D\ref{p5d}). Since $\det B=-b^{2}$ we deduce from Lemma \ref{lemadet} (\ref{lemadetb}) that 
\vspace{-1mm}
\begin{align}\label{bPBF}
b\det P=i(-1)^{l}\sqrt{\det \widetilde{B}}+r, \qquad l\in \mathbb{Z},\,\,
\small{
|r|\leq \left\{\begin{array}{ll}
\tfrac{\|F\|(4\|\widetilde{B}\|+2)}{|\det \widetilde{B}|}, & \det \widetilde{B}\neq 0\\
\sqrt{\|F\|(4\|\widetilde{B}\|+2)}, & \det \widetilde{B}= 0
\end{array}\right..}
\end{align}
%
\normalsize
Together with (\ref{eq2bv}) for $d=0$ and (\ref{ubv}) this gives (D\ref{p4}) and (D\ref{p3bd}).

Next, the equation $P^{T} AP=F+\widetilde{B}$ for $B=\begin{bsmallmatrix}
a & b\\
b & 0
\end{bsmallmatrix}$ yields
\vspace{-1mm}
\begin{align}\label{eqBF5obrat}
&ax^2+2bux=\widetilde{a}+\epsilon_1\nonumber \\
&axy+bvx+buy=\widetilde{b}+\epsilon_2\\ 
&ay^2+2byv=\widetilde{d}+\epsilon_4. \nonumber
\end{align}
We add and subtract $b\det P=b(vx-uy)$ from the second equation of (\ref{eqBF5obrat}):
\vspace{-1mm}
\[
2bvx+axy=b\det P+\widetilde{b}+\epsilon_2, \qquad 
2buy+axy=\widetilde{b}+\epsilon_2-b\det P.
\]
By multiplying the first (the second) equation by $y$ (by $x$) and comparing it with the last (the first) equation of (\ref{eqBF5obrat}), multiplied by $x$ (by $y$), gives 
\vspace{-1mm}
\begin{equation}\label{ybx}
y(b\det P+\widetilde{b}+\epsilon_2)=x(\widetilde{d}+\epsilon_4), \qquad x(-b\det P+\widetilde{b}+\epsilon_2)=y(\widetilde{a}+\epsilon_1).
\end{equation}
For $b=0$ we get (D\ref{p5a}), while using (\ref{bPBF}) and (\ref{ybx}) we obtain (D\ref{p3ba}). 
\end{proof}


\begin{proof}[Proof of Lemma \ref{lemapsi1}.]
For actions $\Psi$, $\Psi_1$ (see (\ref{aAB}) and (\ref{actionpsi1})), it follows that $(A',B')\in \Orb_{\Psi}(A,0)$ if and only if $B'=0$ and $A'\in \Orb_{\Psi_1}(A)$. Hence $\dim \bigl(\Orb_{\Psi_1}(A)\bigr)=\dim \bigl(\Orb_{\Psi}(A,0)\bigr)$, where dimensions of orbits of $\Psi$ are obtained from Lemma \ref{lemalist}.


To prove $\widetilde{A}\to A$ it suffices to find $c(s)\in S^{1}$,
$P(s)\in GL_2(\mathbb{C})$, $A(s)\in \Bun (A)$ so that 
\begin{equation}\label{cPepsipsi1}
c(s)(P(s))^*A(s)P(s)-\widetilde{A}\to 0 \textrm{   as   } s \to 0.
\end{equation}
Trivially $0_2\to 1\oplus 0$, $
\begin{bsmallmatrix}
0 & 1 \\
0 & 0
\end{bsmallmatrix}
\to
\begin{bsmallmatrix}
0 & 1 \\
\tau & 0
\end{bsmallmatrix}
$, 
$
\begin{bsmallmatrix}
0 & 1 \\
1 & 0
\end{bsmallmatrix}
\to
\begin{bsmallmatrix}
0 & 1 \\
\tau & 0
\end{bsmallmatrix}
$ for $0<\tau <1$ 
and  
$
1\oplus e^{i\widetilde{\theta}}
\to
1\oplus e^{i\theta}
$ for $\widetilde{\theta}\in \{0,\pi\}$, $0< \theta< \pi$.
It is 
not too difficult to show 
$
1\oplus 0
\to
1\oplus \lambda
$, 
$
1\oplus 0
\to
\begin{bsmallmatrix}
0 & 1 \\
\tau & 0
\end{bsmallmatrix}
$ for
$0\leq \tau\leq 1$,
$
1\oplus -1
\to
\begin{bsmallmatrix}
0 & 1 \\
1 & i
\end{bsmallmatrix}
$
and 
$
\begin{bsmallmatrix}
0 & 1 \\
1 & i
\end{bsmallmatrix}
\to
\begin{bsmallmatrix}
0 & 1 \\
\tau & 0
\end{bsmallmatrix}
$ for $0<\tau <1$,
we take 
$
P(s)=1\oplus s
$, 
$
P(s)=\frac{1}{\sqrt{1+\tau}}\begin{bsmallmatrix}
1 & 0 \\
1 & s
\end{bsmallmatrix}
$,
$
P(s)=\frac{1}{\sqrt{2}}\begin{bsmallmatrix}
s^{-1} & s^{-1} \\
s & -s
\end{bsmallmatrix}
$ and 
$
P(s)=
\frac{1}{2\sqrt{s}}\begin{bsmallmatrix}
s & -2i \\
-is & 2
\end{bsmallmatrix}
$ with $\tau(s)=1-s$ in (\ref{cPepsipsi1}), respectively; in all cases $c(s)=1$. 
Finally,
$A(s)=1\oplus e^{i\theta(s)}$ with $\cos (\frac{\theta(s)}{2})=\frac{s}{2}$, $c(s)=1$,
$
P(s)=\sqrt{s}
\begin{bsmallmatrix}
i & is^{-1}\\
0 & -is^{-1}
\end{bsmallmatrix}
$
proves $
\begin{bsmallmatrix}
0 & 1 \\
1 & i
\end{bsmallmatrix}
\to
1\oplus e^{i\theta}
$ for $0<\theta <1$.

It is left to find necessary conditions for the existence of these paths, i.e. given $\widetilde{A}$, $E$, we must find out how $c$, $P$, $A$ depend on $E$,  $\widetilde{A}$, if the following is satisfied:
\begin{equation}\label{eqcPAE}
cP^*AP=\widetilde{A}+E, \qquad c\in S^{1}, P\in GL_2(\mathbb{C}).
\end{equation}
On the other hand, if (\ref{eqcPAE}) fails for every sufficiently small $E$, it gives $\widetilde{A}\not \to A$. 
In such cases the
lower estimates for $\|E\|$ will be provided. 
These easily follow for $\widetilde{A}\neq 0$, $A=0$ and $\det \widetilde{A}\neq 0$, $\det A=0$ (Lemma \ref{lemadet} (\ref{lemadeta})). 

Throughout the rest of the proof we denote  
\begin{equation}\label{wAE}
\widetilde{A}=
\begin{bmatrix}
\alpha & \beta\\
\gamma & \omega
\end{bmatrix}
,\qquad
E=
\begin{bmatrix}
\epsilon_1 & \epsilon_2\\
\epsilon_3 & \epsilon_4
\end{bmatrix}
,\qquad
P=
\begin{bmatrix}
x & y\\
u & v
\end{bmatrix}.
\end{equation}
%

\begin{enumerate}[label={\bf Case \Roman*.},ref={Case \Roman*.},   ,wide=0pt,itemsep=10pt]
\item \label{p1i2}
$
A=\begin{bsmallmatrix}
0 & 1\\
1 & i
\end{bsmallmatrix}
$ \quad ($\Bun_{\Psi_1}(A)=\Orb_{\Psi_1}(A)$)\\ 
This case coincides with \cite[Lemma 3.4. Case I]{TS2}; see (C\ref{r10}), (C\ref{r5}).

\item \label{p100l}
$A=1\oplus \lambda$, \quad $|\lambda|\in \{1,0\}$\\
%
%
The equation (\ref{eqcPAE}) multiplied by $c^{-1}$, written componentwise and rearranged is: 
\begin{align}\label{eq1l}
& |x|^2+\lambda|u|^2 -c^{-1}\alpha=c^{-1}\epsilon_1, \quad && \overline{x}y+\lambda \overline{u}v-c^{-1}\beta=c^{-1}\epsilon_2,\\
&\overline{y}x+\lambda\overline{v}u-c^{-1}\gamma=c^{-1}\epsilon_3,\quad && |y|^2+\lambda|v|^2-c^{-1}\omega=c^{-1}\epsilon_4.\nonumber
\end{align}
Subtracting the second complex-conjugated equation (and multiplied by $\lambda$) from the third equation (and multiplied by $\overline{\lambda}$) for $\beta, \gamma\in \mathbb{R}$ gives
\begin{align}\label{eq1l3}
&2\Ima (\lambda)\overline{v}u-c^{-1}\gamma+\overline{c}^{-1}\beta= c^{-1}\epsilon_3- \overline{c}^{-1}\overline{\epsilon}_2,\\
&-2\Ima (\lambda)\overline{y}x-c^{-1}\overline{\lambda}\gamma+\overline{c}^{-1}\lambda\beta=c^{-1}\overline{\lambda}\epsilon_3-\overline{c}^{-1}\lambda\overline{\epsilon}_2\nonumber .
\end{align}

\begin{enumerate}[label=(\alph*),wide=0pt,itemindent=2em,itemsep=6pt]

\item 
$\lambda=e^{i\theta}$, \quad $0\leq \theta \leq \pi$\\
From (\ref{eq1l3}) for $\beta=\gamma=0$, $\Ima (\lambda)=\sin \theta$ we get 
\begin{equation}\label{sintuv}
\big|(\sin \theta)\overline{v}u\big|\leq  \|E\|, \qquad \big|(\sin \theta)\overline{x}y\big|\leq  \|E\|.
\end{equation}
We take the (real) imaginary parts of the (last) first equation of (\ref{eq1l}) for $\lambda=e^{i\theta}$: 
\begin{align}\label{eqa0}
&(\sin \theta) |u|^2=\Ima (c^{-1}\alpha+c^{-1}\epsilon_1), \qquad |x|^2+(\cos \theta)|u|^2=\Rea (c^{-1}\alpha+c^{-1}\epsilon_1),\\
&(\sin \theta) |v|^2=\Ima (c^{-1}\omega+c^{-1}\epsilon_4), \qquad |y|^2+(\cos \theta)|v|^2=\Rea (c^{-1}\omega+c^{-1}\epsilon_4).\nonumber
\end{align}
If $\alpha=0$ 
we further have:
\begin{align}\label{e01}
(\sin \theta)|u|^2\leq \|E\|,& \quad 
(\sin \theta)|x|^2\leq \|E\|(\sin \theta+|\cos \theta|),\\
%
%
\big|(\sin \theta)|v|^2-\Ima (c^{-1}\omega)\big|\leq \|E\|,& \quad 
\big|(\sin \theta)|y|^2-\Rea (c^{-1}\omega)\big|\leq \|E\|(\sin \theta+|\cos \theta|).\nonumber
\end{align}
\begin{enumerate}[label=(\roman*),wide=0pt,itemindent=4em,itemsep=3pt]

\item
$
\widetilde{A}=
\begin{bsmallmatrix}
0 & 1\\
\widetilde{\tau} & 0
\end{bsmallmatrix}
$, \quad
$0\leq \widetilde{\tau}\leq 1$
%
%

If $1\leq \widetilde{\tau}< 1$, then by applying the triangle inequality to the first equation of (\ref{eq1l3}) for $\beta=1$, $\gamma=\widetilde{\tau}$, $\Ima (\lambda)=\sin \theta$ and using the first estimates of (\ref{e01}) for $\omega=0$ we obtain 
$2\|E\|\geq 2(\sin\theta)| uv|\geq 1-\widetilde{\tau}-2\|E\|$, which fails for $\|E\|<\frac{1-\widetilde{\tau}}{4}$. 

\item
$
\widetilde{A}=
\begin{bsmallmatrix}
0 & 1\\
1 & \omega
\end{bsmallmatrix}
$, \quad $\omega \in \{0,i\}$

By applying the triangle inequality to the second equation of (\ref{eq1l}), and using (\ref{e01}) with $|c^{-1}\omega|\leq 1$ leads to the inequality:
\[
(\sin \theta)(1-\|E\|)\leq \sin \theta|\overline{x}y+\lambda \overline{u}v|\leq 
\sqrt{\|E\|(1+\|E\|)}+\sqrt{2\|E\|(1+2\|E\|)}.
\]
If $\|E\|\leq\frac{1}{12}$ then we deduce $\sin \theta\leq 3\sqrt{\|E\|}$ and $\cos^{2} \theta\geq 1-9\|E\|$. 
If $\theta$ is close to $0$ then the second and the last equation of (\ref{eqa0}) for $\alpha=0$, $|c^{-1}\omega|\leq 1$ imply that $|x|^{2},|u|^{2}\leq \frac{\|E\|}{\sqrt{1-9\|E\|}}$ and $|y|^{2},|v|^{2}\leq \frac{1+\|E\|}{\sqrt{1-9\|E\|}}$, respectively. 
For $\|E\|$ so small that $1>2\frac{\sqrt{\|E\|(1+\|E\|)}}{\sqrt{1-9\|E\|}} +\|E\|$, the second equation of (\ref{eq1l}) for $\beta=1$ fails.
Next, when $\theta$ is close to $\pi$, we deduce that $\frac{1+\cos \theta}{\sin \theta}=\cot \frac{\theta}{2}$ is close to $0$ and $\pi-\theta\in (0,\frac{\pi}{2})$, hence 
\begin{align}\label{ocenasin2cos}
&|\cos\tfrac{\theta}{2}|=|\sin (\tfrac{\pi- \theta}{2})|\leq \sin (\pi-\theta)=\sin \theta,\quad |\cos(\tfrac{\theta+\pi}{4})|=|\sin (\tfrac{\pi- \theta}{4})|\leq 
\sin \theta,\\
&1+\cos \theta=
\tfrac{\cos \frac{\theta}{2}\sin \theta}{\sin \frac{\theta}{2}}\leq \tfrac{\sin \theta}{\sqrt{1-\sin^{2}\theta}}\sin \theta  
,\quad
1-\sin \tfrac{\theta}{2}=\tfrac{\cos^{2} \frac{\theta}{2}}{1+\sin \frac{\theta}{2}}\leq \sin \theta.\nonumber
\end{align}
We have $c^{-1}=-i(-1)^{k}e^{i\frac{\theta}{2}}+\overline{g}$, $|g|\leq 12\|E\|$ with $\|E\|\leq \frac{1}{12}$ (Lemma \ref{lemadet} (\ref{cE})), thus 
$\big|\Rea (c^{-1}i)\big|=|\cos \frac{\theta}{2}+i\overline{g}|\leq 3\sqrt{\|E\|}+12\|E\|$ (since $\sin \theta\leq 3\sqrt{\|E\|}$). 
Using the second (fourth) equation of (\ref{eqa0}) and (\ref{e01}) for $\alpha=0$, $\omega\in \{0,i\}$ with (\ref{ocenasin2cos}) further implies
\begin{align}\label{eqxuyvth1}
\big||x|^{2}-|u|^{2}\big|-\|E\|  &  \leq \big||x|^2-|u|^{2}+(1+\cos \theta)|u|^2\big|=\big||x|^{2}+(\cos \theta)|u|^{2}\big|\leq \|E\|, \\ 
\big||y|^{2}-|v|^{2}\big|-\tfrac{3\sqrt{\|E\|}(1+\|E\|)}{\sqrt{1-9\|E\|}}   
                                &    \leq \big||y|^{2}-|v|^{2}+(1+\cos \theta)|v|^{2}\big|=\big||y|^{2}+(\cos \theta)|v|^{2}\big|\nonumber\\
                                &    \leq 3\sqrt{\|E\|}+13\|E\|.\nonumber
\end{align}
%
Using the second equation of (\ref{eq1l}) and (\ref{e01}), (\ref{ocenasin2cos}) (for $\alpha=0$, $\omega\in \{0,i\}$) we get:
\begin{align}\label{eqxuyvth2}
14\|E\|&\geq |\overline{x}y+e^{i\theta}\overline{u}v+i(-1)^{k}e^{i\frac{\theta}{2}}|=\nonumber\\
       &= \big|(\overline{x}y-\overline{u}v-(-1)^{k})+2(\cos \tfrac{\theta}{2})e^{i\frac{\theta}{2}}\overline{u}v+2(-1)^{k}(\cos\tfrac{\theta+\pi}{4})e^{i\frac{\theta+\pi}{4}}
       \big|\\
       &\geq \big|\overline{x}y-\overline{u}v-(-1)^{k}\big|-
       2\sqrt{\|E\|(1+\|E\|)}+6\sqrt{\|E\|}.\nonumber
\end{align}
For $\omega=i$ we have $\Ima (c^{-1}i)=\sin \frac{\theta}{2}+\Ima (i\overline{g})$, $|g|\leq 12\|E\|$, therefore (\ref{e01}) yields
\[
13\|E\|\geq 
\big|(\sin\theta) |v|^{2}-(-1)^k-(-1)^{k}(\sin \tfrac{\theta}{2}-1)  \big|\geq
\big|(\sin\theta) |v|^{2}-(-1)^k\big|-3\sqrt{\|E\|}.
\]
Together with (\ref{eqxuyvth1}) and (\ref{eqxuyvth2}) it proves (C\ref{r33}).
Note that the third equation of (\ref{eqa0}) for $\theta=\pi$, $\omega=i$ 
fails for $\|E\|<\frac{1}{13}$.

\item $\widetilde{A}=\alpha\oplus 0$, $\alpha\in \{0,1\}$

If $\theta \in \{0,\pi\}$, then (\ref{eq1l}) for $e^{i\theta}=\sigma$ yields (C\ref{r9}).

\quad
By (\ref{sintuv}) and the second equation of (\ref{eq1l}) we have 
\begin{equation}\label{eqxycuv}
\big|\overline{x}y+(\cos \theta )\overline{u}v\big|\leq 2\|E\|.
\end{equation}
If $0<\theta\leq\pi$, then (\ref{eq1l}), (\ref{sintuv}), (\ref{eqxycuv}) for $\omega=\beta=\gamma=0$, $\lambda=e^{i\theta}$ give (C\ref{r32}). 

\item
$
\widetilde{A}=
1\oplus e^{i\widetilde{\theta}}
$,\quad
$0\leq \widetilde{\theta} \leq \pi$.\\
By Lemma \ref{lemadet} (\ref{cE}) we have 
$c^{-1}=(-1)^k e^{i\frac{\theta-\widetilde{\theta}}{2}}+\overline{g}$, $|\overline{g}|\leq 12\|E\|$, assuming that $\|E\|\leq \frac{1}{12}$. 
Thus the first and the last equation of (\ref{eq1l}) for $\alpha=1$, $\lambda=e^{i\widetilde{\theta}}$ are of the form:
\begin{align}\label{eq1l5}
&|x|^2+e^{i\theta}|u|^2 =(-1)^k e^{i\frac{\theta-\widetilde{\theta}}{2}}+(\overline{g}+c^{-1}\epsilon_1),\\
&|y|^2+e^{i\theta}|v|^2=(-1)^k e^{i\frac{\widetilde{\theta}+\theta}{2}}+(\overline{g}e^{i\widetilde{\theta}}+c^{-1}\epsilon_4).\nonumber
\end{align}
We take the 
imaginary parts of (\ref{eq1l5}) and apply the triangle inequality:
\begin{align}\label{eq1lim}
& \bigl| |u|^2\sin \theta-(-1)^k\sin (\tfrac{\theta-\widetilde{\theta}}{2})\bigr| \leq  \bigl| \Ima (\overline{g})+\Ima (\epsilon_1)\bigr|\leq 13\|E\|,\\
& \bigl| |v|^2\sin \theta-(-1)^k\sin (\tfrac{\widetilde{\theta}+\theta}{2})\bigr|\leq \bigl| \Ima (\overline{g}e^{i\widetilde{\theta}}+\overline{c}^{-1}\epsilon_4)\bigr|\leq 13\|E\|.\nonumber
\end{align}
In particular we have
\begin{equation*}
|u|^2\sin \theta \geq |\sin (\tfrac{\widetilde{\theta}-\theta}{2})|-13\|E\|, \quad 
|v|^2\sin \theta \geq |\sin (\tfrac{\widetilde{\theta}+\theta}{2})|-13\|E\|.
\end{equation*}
By multiplying these inequalities and using the triangle inequality we deduce 
\begin{align*}
(\sin ^2\theta)|uv|^2\geq 	
 \big|\sin (\tfrac{\widetilde{\theta}-\theta}{2})\sin (\tfrac{\widetilde{\theta}+\theta}{2})\big|-13\|E\|\bigl(\big|\sin (\tfrac{\widetilde{\theta}-\theta}{2})\big|+\big|\sin (\tfrac{\widetilde{\theta}+\theta}{2})\big|  \bigr)-169\|E\|^2 .				   
\end{align*}
By combining it with (\ref{sintuv}) and rearranging the terms we obtain
\begin{align}\label{eqtnwt}
\tfrac{1}{2}|\cos\widetilde{\theta}-\cos \theta|=\big|\sin (\tfrac{\widetilde{\theta}-\theta}{2})\sin (\tfrac{\widetilde{\theta}+\theta}{2})\big|  \leq  170 \|E\|^{2} + 26\|E\|
\leq 196 \|E\|. 
\end{align}
If $\theta\in \{0,\pi\}$ with $\widetilde{\theta}\neq \theta$ then (\ref{eqtnwt}) fails for $\|E\|<\frac{1-|\cos \widetilde{\theta}|}{392}$.

\quad 
We take the real parts in the first equation of (\ref{eq1l5}), multiply them by $\sin \theta$, then rearrange the terms and apply (\ref{eq1lim}):
\begin{align}\label{octwt}
(\sin\theta) \big||x|^{2}-(-1)^{k} \cos (\tfrac{\widetilde{\theta}-\theta}{2})\big|=&
\big|-\sin \theta \cos \theta |u|^{2}+(\sin \theta) \Rea(\overline{g}+c^{-1}\epsilon_1)\big|,\\
(\sin\theta) \big||x|^{2}-(-1)^{k}\big|-(\sin\theta) \big|\cos (\tfrac{\widetilde{\theta}-\theta}{2})-1\big|   \leq   &\big|\sin (\tfrac{\theta-\widetilde{\theta}}{2})\big|+13\|E\|+13\|E\|.\nonumber
\end{align}

\quad
Next, let $0<\widetilde{\theta},\theta<\pi$. Thus
$\frac{\theta-\widetilde{\theta}}{2}\in (-\frac{\pi}{2},\frac{\pi}{2})$ and $\frac{\theta+\widetilde{\theta}}{2}\in (\frac{\widetilde{\theta}}{2},\frac{\widetilde{\theta}+\pi}{2})\subset (0,\pi) $ with $\sin (\frac{\widetilde{\theta}+\theta}{2})\geq \min\{\sin \frac{\widetilde{\theta}}{2},\cos \frac{\widetilde{\theta}}{2}\}$. We apply (\ref{eqtnwt}) and make a trivial estimate: 
\begin{equation}\label{sintwtcos}
\tfrac{196\|E\|}{\min\{\sin \widetilde{\theta},\cos \widetilde{\theta}\}}\geq\big|\sin (\tfrac{\widetilde{\theta}-\theta}{2})\big|\geq\big|\sin (\tfrac{\widetilde{\theta}-\theta}{4})\big|= 
\tfrac{1}{\sqrt{2}}\big|\cos (\tfrac{\widetilde{\theta}-\theta}{2})-1\big|^{\frac{1}{2}}.
\end{equation}
%
By combining (\ref{octwt}) and (\ref{sintwtcos}) it is straightforward to get a constant $C>0$ so that 
\begin{equation}\label{sintx2}
(\sin\theta )\big||x|^{2}-(-1)^{k}\big|\leq \tfrac{196\|E\|}{\min\{\sin \widetilde{\theta},\cos \widetilde{\theta}\}}+2\big(\tfrac{196\|E\|}{\min\{\sin \widetilde{\theta},\cos \widetilde{\theta}\}}\big)^{2}+26\|E\|\leq C \|E\|.
\end{equation}
%

\quad
We multiply the second equation of (\ref{eq1l5}) by $e^{-i\theta}$. Then we take the imaginary parts or only rearrange the terms; in both cases we also use (\ref{sintwtcos}): 
\begin{align}\label{yv21}
(\sin \theta)|y|^{2}     & \leq \big| \sin (\tfrac{\widetilde{\theta}-\theta}{2}) \big|+14\|E\|\leq C'\|E\|, \quad
C':=\tfrac{196}{\min\{\sin \widetilde{\theta},\cos \widetilde{\theta}\}}+14,\\
\big||v|^{2}-(-1)^{k}\big|  \leq  &|e^{i\frac{\widetilde{\theta}-\theta}{2}}-1|+  |y|^{2}+|\overline{g}e^{i\widetilde{\theta}}+c^{-1}\epsilon_4|\leq
2\big(\tfrac{196\|E\|}{\min\{\sin \widetilde{\theta},\cos \widetilde{\theta}\}}\big)^{2}+13\|E\|+|y|^{2}.\nonumber
\end{align}
From the first estimate in (\ref{eq1lim}) we similarly obtain $(\sin \theta)|u|^{2}\leq C'\|E\|$.
If $\sin\theta\leq \max\{\sqrt{C},\sqrt{C'}\}\sqrt{\|E\|}$, then (\ref{eqtnwt}) yields a contradiction for sufficiently small $\|E\|$.
Otherwise $|u|^{2}\leq \sqrt{C'\|E\|}$ and (\ref{sintx2}), (\ref{yv21}) imply $\big||x|^{2}-(-1)^{k}\big|\leq \sqrt{C\|E\|}$, $|y|^{2}\leq \sqrt{C'\|E\|}$, respectively.
The last estimate in (\ref{yv21}) concludes the proof of (C\ref{r1}).

\quad
Finally, suppose $0<\theta<\pi$ and $\widetilde{\theta}\in \{0,\pi\}$; hence $\frac{\widetilde{\theta}-\theta}{2}\in (-\frac{\pi}{2},\frac{\pi}{2})$. We apply (\ref{eqtnwt}) and use (\ref{eq1lim}) for $\widetilde{\theta}=0$ or $\widetilde{\theta}=\pi$ to deduce
\begin{equation}\label{0pi}
14 \sqrt{\|E\|}\geq 
\big|\sin (\tfrac{\widetilde{\theta}-\theta}{2})\big|\geq \big|\sin (\tfrac{\widetilde{\theta}-\theta}{4})\big|,\qquad
|u|^2\sin \theta,|v|^2\sin \theta\leq 13 \|E\|+14\sqrt{\|E\|}.
\end{equation}
Assume now that $\sqrt{\|E\|}\leq \frac{\sqrt{2}}{28}$. If $\widetilde{\theta}=0$, then $|\cos \frac{\theta}{2}|\geq \frac{\sqrt{2}}{2}$, 
therefore $1-\cos \theta=
(\sin \theta )|\tan \frac{\theta}{2}|\leq \sqrt{2}\sin \theta$. Similarly, for $\widetilde{\theta}=\pi$ we have 
$|\sin \frac{\theta}{2}|\geq \frac{\sqrt{2}}{2}$ and so $1+\cos \theta=
 (\sin \theta )|\cot \frac{\theta}{2}|\leq \sqrt{2}\sin \theta $.
We take the real parts of the first equation (\ref{eq1l5}) for $\sigma=e^{i\widetilde{\theta}}$ with $\widetilde{\theta}\in \{0,\pi\}$, rearrange the terms, and apply the triangle inequality:
\begin{align}
13\|E\| &\geq\big| |x|^{2}+ \sigma|u|^{2}-(-1)^{k}+(-1)^{k}(1-\cos (\tfrac{\theta-\widetilde{\theta}}{2}))-|u|^{2}(\sigma-\cos \theta)    \big| 
\\
        &\geq \big| |x|^{2}+\sigma |u|^{2}-(-1)^{k}\big|-392\|E\|-\sqrt{2}(13 \|E\|+14\sqrt{\|E\|})
        .\nonumber
\end{align}
%
The same proof applies if we replace $x,u,(-1)^{k}$ by $y,v,\sigma (-1)^{k}$, respectively. The second equation (\ref{eq1l}) for $\beta=0$, $\lambda=e^{i\theta}$ and (\ref{sintuv}) finally yield  
\begin{align*}
\|E\| &\geq\big|\overline{x}y+e^{i\theta}\overline{u}v\big|= \big|\overline{x}y+\sigma\overline{u}v-(\sigma-\cos \theta)\overline{u}v+i(\sin\theta)\overline{u}v  \big| \nonumber\\
        &\geq \big|\overline{x}y+\sigma\overline{u}v\big|-(1+\sqrt{2})\big(13 \|E\|+14\sqrt{\|E\|}\big).
\end{align*}
Thus (C\ref{r99}) follows.

\end{enumerate}

\item $\lambda=0$\ \quad (hence $\det \widetilde{A}=0$.)\\ 
If $\widetilde{A}=\alpha\oplus 0$ for $\alpha\in\{0,1\}$, 
then (C\ref{r11}) follows from (\ref{eq1l}) for $\omega=\lambda=0$. Applying (\ref{ocenah}) for $\|E\|\leq \frac{1}{2}$ to the first equation of (\ref{eq1l}) for $\alpha=1$, $\lambda=0$ (multiplied by $c$), yields 
$c=e^{i\psi}=1+2i(\sin \frac{\psi}{2})e^{i\frac{\psi}{2}}$ with $|\sin \frac{\psi}{2}|\leq 2\|E\|$. 
If $\widetilde{A}=\begin{bsmallmatrix}
0 & 1\\
0 & 0
\end{bsmallmatrix}$, then (\ref{eq1l}) for $\lambda=\alpha=\omega=0$ yields $|x|^{2},|y|^{2}\leq \|E\|$, thus (\ref{eq1l}) fails for $\lambda=\gamma=0$,  $\|E\|<\frac{1}{2}$.

\end{enumerate}

\item \label{p01t0}
$
A=\begin{bsmallmatrix}
0 & 1\\
\tau & 0
\end{bsmallmatrix}
$, \quad $0\leq \tau \leq 1$\\
%
%
From (\ref{eqcPAE}) multiplied by $c^{-1}$ we obtain 
\begin{align}\label{equ01t}
\overline{x}u+\tau\overline{u}x -c^{-1}\alpha=c^{-1}\epsilon_1, \qquad \overline{x}v+\tau \overline{u}y-c^{-1}\beta=c^{-1}\epsilon_2,\\
\tau \overline{v}x+\overline{y}u-c^{-1}\gamma=c^{-1}\epsilon_3,\qquad \overline{y}v+\tau\overline{v}y-c^{-1}\omega=c^{-1}\epsilon_4.\nonumber
\end{align}
Rearranging the terms of the first and the last equation immediately yields 
\begin{align}\label{eq01t2}
(1+\tau)\Rea(\overline{x}u)+i(1-\tau)\Ima(\overline{x}u)=c^{-1}\alpha+c^{-1}\epsilon_1,\\
(1+\tau)\Rea(\overline{y}v)+i(1-\tau)\Ima(\overline{y}v)=c^{-1}\omega+c^{-1}\epsilon_4,\nonumber
\end{align}
while multiplying the third (second) complex-conjugated equation with $\tau$, subtracting it from the second (third) equation, and rearranging the terms, give
\begin{align}\label{eq011t3}
(1-\tau^2)\overline{x}v=&c^{-1}(\beta+\epsilon_2)-\tau\overline{c}^{-1}(\overline{\gamma}+\overline{\epsilon_3})= 
(c^{-1}\beta-\tau\overline{c}^{-1}\overline{\gamma})+(c^{-1}\epsilon_2-\tau\overline{c}^{-1}\overline{\epsilon_3})\\
(1-\tau^{2})\overline{y}u= &c^{-1}(\gamma+\epsilon_3)-\tau \overline{c}^{-1}(\overline{\beta}+\overline{\epsilon_2})=
(c^{-1}\gamma-\tau \overline{c}^{-1}\overline{\beta})+(c^{-1}\epsilon_3-\tau \overline{c}^{-1}\overline{\epsilon_2}).\nonumber
\end{align}

For the existence of paths to $\begin{bsmallmatrix}
0 & 1\\
1 & 0
\end{bsmallmatrix}$ ($*$-congruent to $1\oplus -1$) see  \ref{p100l} 



\quad
Using (\ref{equ01t}) we obtain that
\begin{equation}\label{xyuvt2}
(1+\tau)|\overline{x}u|\geq |\alpha+\epsilon_1|\geq (1-\tau)|\overline{x}u|,\qquad (1+\tau)|\overline{y}v|\geq |\omega+\epsilon_4|\geq (1-\tau)|\overline{y}v|.
\end{equation}
By multiplying the left-hand and the right-hand sides of these inequalities we get
\begin{align}\label{xyuvt22}
(1+\tau)^{2}|\overline{x}u\overline{y}v|\geq |\alpha\omega|-\bigl(|\alpha|+|\omega|\bigr)\|E\|-\|E\|^2,\\
\label{xyuvt}
|\alpha\omega|+\bigl(|\alpha|+|\omega|\bigr)\|E\|+\|E\|^2 \geq (1-\tau)^{2}|\overline{x}u\overline{y}v|.
\end{align}

\begin{enumerate}[label=(\alph*),wide=0pt,itemindent=2em,itemsep=3pt]


\item $\widetilde{A}=\begin{bsmallmatrix}
0 & 1\\
\gamma & \omega
\end{bsmallmatrix}$,\quad either  $0 \leq \gamma\leq 1$, $\omega=0$ or $\gamma=1$, $\omega=i$

Equations (\ref{eq011t3}) for $\beta=1$, $0\leq \gamma \leq 1$ imply
\begin{align*}
(1-\tau^2)|\overline{x}v|\geq |\tau \gamma-1|-(\tau+1)\|E\|,\quad
(1-\tau^{2})|\overline{u}y|\geq |\gamma-\tau|-(1+\tau)\|E\|.\nonumber
\end{align*}
By combining these inequalities and making some trivial estimates we deduce
\[
(1-\tau^{2})^2|\overline{y}u\overline{x}v|\geq |\tau\gamma-1|\,|\gamma-\tau|-(1+\tau)\bigl(\tau\gamma+1+\gamma+\tau\bigr)\|E\|-(1+\tau)^2\|E\|^2.
\]
Together with (\ref{xyuvt}) for $\alpha=0$ and using $\|E\|\geq \|E\|^{2}$ we get 
\begin{align}\label{twt}
&(1+\tau)^2(1+|\omega|)\|E\|  \geq |\tau\gamma-1|\,|\gamma-\tau|-(1+\tau)^{2}(\gamma+1)\|E\|-(1+\tau)^2\|E\|,\nonumber\\
&(1+\tau)^2\big(3+|\omega|+\gamma\big)\|E\| \geq |\tau\gamma-1|\,|\gamma-\tau|\geq |1-\gamma|\,|\gamma-\tau|.
\end{align}
%
If $0\leq \gamma<1$ (if $\gamma=1$) then the right-hand (the left-hand) side of 
(\ref{twt}) implies 
\begin{equation}\label{ogt}
|\gamma-\tau|\leq \left\{\begin{array}{ll}
\frac{(1+\tau)^{2}}{1-\gamma}\big(4+|\omega|\big)\|E\|, & 0\leq \gamma<1\\
(1+\tau)\sqrt{(4+|\omega|)\|E\|}, & \gamma=1
\end{array}.\right.
\end{equation}
When either $\tau=0$, $\gamma> 0$ or $\tau=1$, $\gamma< 1$ (and $\|E\|$ is small enough), then (\ref{ogt}) fails. 
If $0\leq \gamma<1$ and $\|E\|\leq \frac{(1-\gamma)^{2}}{2(1+\tau)^{2}(4+|\omega|)}$ (hence $1-\tau\geq |1-\gamma|-|\gamma-\tau|\geq \frac{1-\gamma}{2}$), then (\ref{xyuvt2}) for $\alpha=0$ (for $\omega=0$) yields $|\overline{x}u|\leq \frac{2}{1-\gamma}\|E\|$ (and $|\overline{y}v|\leq \frac{2}{1-\gamma}\|E\|$). 
Next, (\ref{eq011t3}), (\ref{ogt}) for $\beta=1$, $\gamma=0$,  imply $|\overline{y}u|\leq C \|E\|$ and $|\overline{x}v-c^{-1}|\leq C \|E\|$ for some constant $C>0$ (see (C\ref{r6}) for $\widetilde{\tau}=0$, $0\leq \tau<1$).

\quad
By Lemma \ref{lemadet} (\ref{cE}) for $1\geq\tau> 0$, $\widetilde{A}=\begin{bsmallmatrix}
0 & 1\\
\gamma & \omega
\end{bsmallmatrix}$ with $1\geq\gamma> 0$ and $\|E\|\leq \frac{\gamma}{12}\leq \frac{1}{12}$, we have $c^{-1}=(-1)^{k}+\overline{g}$, $k\in\mathbb{Z}$, $|g|\leq \frac{12}{\gamma}\|E\|$, thus (\ref{eq011t3}) for $\beta=1$ (and $\gamma\in \mathbb{R}$) gives
\begin{align*}
&(1-\tau^2)\overline{x}v=\big((-1)^{k}(1-\tau\gamma)-g\tau\gamma+\overline{g}\big)+(c^{-1}\epsilon_2-\tau\overline{c}^{-1}\overline{\epsilon_3})\\
&(1-\tau^{2})\overline{y}u=(-1)^{k}(\gamma-\tau)+\gamma\overline{g}-\tau g+(c^{-1}\epsilon_3-\tau \overline{c}^{-1}\overline{\epsilon_2}).\nonumber
\end{align*}
We further obtain
\begin{align}\label{ocetyuxv}
(1-\tau^{2})|\overline{y}u| & \leq (\gamma-\tau)+(\tau\gamma+1)\tfrac{12}{\gamma}\|E\|+(1+\tau)\|E\|, \\ 
(1-\tau^2)\big|\overline{x}v-(-1)^{k}\big|  &  \leq \tau(\gamma-\tau)+\tfrac{12(\tau\gamma+1)}{\gamma}\|E\|+(\tau+1)\|E\|.\nonumber
\end{align}
Using (\ref{ogt}) for $0<\gamma<1$ we deduce that the left-hand sides of (\ref{ocetyuxv}) are bounded by $D \|E\|$, where $D:=\frac{4(4+|\omega|)}{1-\gamma}+\frac{12(\gamma+1)}{\gamma}+2$. Thus either $1-\tau^{2}\leq \sqrt{D}\sqrt{\|E\|}$ and  
\[
|1-\gamma|\leq |\tau-\gamma|+|1-\tau|\leq \tfrac{(1+\tau)^{2}}{1-\gamma}(2+|\omega|)\|E\|+\tfrac{\sqrt{D}\sqrt{\|E\|}}{2}
\]
fails for small $\|E\|$, or we have $|\overline{y}u|,\big|\overline{x}v-(-1)^{k}\big|\leq \sqrt{D}\sqrt{\|E\|}$ (see (C\ref{r6}) for $0<\tau_0,\tau<1$).
%
The second equation of (\ref{equ01t}) with $\beta=1$, $c^{-1}=(-1)^{k}+\overline{g}$, $k\in\mathbb{Z}$, $|g|\leq 12\|E\|$ gives
\begin{equation}\label{ocexvtuy2}
\big|\overline{x}v+\overline{u}y-(-1)^{k}\big|-(1-\tau)|\overline{u}y|\leq \big|\overline{x}v+\tau\overline{u}y-(-1)^{k}\big|\leq 12\|E\|+\|E\|.
\end{equation}
%
From (\ref{equ01t}), (\ref{eq01t2}), (\ref{ogt}), (\ref{ocetyuxv}), (\ref{ocexvtuy2})
for $\alpha=0$, $\omega\in \{0,i\}$, $\gamma=1$ we deduce (C\ref{r44}).
If $\omega=i$, $\tau=1$ and $\|E\|<\frac{1}{13}$, then the second equality of (\ref{eq01t2}) fails.

\item $\widetilde{A}=\alpha\oplus \omega$ 

From (\ref{eq011t3}) for $\beta=\gamma=0$ it follows that
\begin{equation}\label{xbarv}
(1-\tau^{2})|\overline{x}v|\leq (1+\tau)\|E\|, \quad (1-\tau^{2})|\overline{u}y|\leq (1+\tau)\|E\|,\quad (1-\tau)^{2}|\overline{x}v\overline{u}y|\leq  \|E\|^2.
\end{equation}
%

%
%
Next, (\ref{xbarv}) yields either $(1-\tau)|\overline{x}u|\leq \|E\|$ or $(1-\tau)|\overline{y}v|\leq \|E\|$.

\quad
By Lemma \ref{lemadet} (\ref{cE}) for $0<\tau\leq 1$, $\widetilde{A}=1\oplus e^{i\widetilde{\theta}}$, we have $c^{-1}=(-1)^{k}e^{-i\frac{\widetilde{\theta}+\pi}{2}}+\overline{g}$, $k\in\mathbb{Z}$, $|g|\leq 12\|E\|$.
We take the imaginary parts of (\ref{eq01t2}) with $\alpha=1$, $\omega=e^{i\widetilde{\theta}}$, $0<\tau<1$ to deduce $|\cos \frac{\widetilde{\theta}}{{2}}|\leq 14\|E\|$, which fails for $0\leq \widetilde{\theta} <\pi$ and small $\|E\|$.  

\quad
By combining (\ref{xbarv}) with (\ref{xyuvt22}) for $|\alpha|=|\omega|=1$ and using $\|E\|\leq \frac{1}{4}$,
we get 
\[
\tfrac{1}{4}(1-\tau)^{2}\leq (1-\tau)^{2}(1-2\|E\|-\|E\|^2)\leq (1-\tau^{2})^{2}|\overline{x}v\overline{u}y| \leq (1+\tau)^{2}\|E\|^{2}.
\]
Thus $1-\tau\leq 4\|E\|$. (In particular, we obtain a contradiction for $\tau=0$, $|\alpha|=|\omega|=1$.)
%
%
When $\widetilde{\theta}=\pi$ (i.e. $\widetilde{A}=1\oplus-1$, $c^{-1}=(-1)^{k+1}+\overline{g}$, $k\in\mathbb{Z}$, $|g|\leq 12\|E\|$), we use (\ref{equ01t}), (\ref{eq01t2}) 
for $\beta=0$, $\alpha=-\omega=1$ to get $(1-\tau) \Ima (\overline{x}u),(1-\tau) \Ima (\overline{x}u)\leq 13\|E\|$ and
\begin{align}\label{equal10}
&|\overline{x}v+\overline{u}y|-2\|E\|\leq  |\overline{x}v+\overline{u}y|-(1-\tau)|\overline{u}y|\leq|\overline{x}v+\tau\overline{u}y|\leq \|E\|,\nonumber\\
&\bigl|2\Rea(\overline{x}u)-(-1)^{k+1}\bigr|= 
\tfrac{2}{1+\tau}
\bigl|(1+\tau)\Rea(\overline{x}u)-(-1)^{k+1}+(-1)^{k+1}\tfrac{1-\tau}{2}\bigr|\leq 30 \|E\|,\\
&\bigl|2\Rea(\overline{y}v)-(-1)^{k}\bigr|= 
\tfrac{2}{1+\tau}
\bigl|(1+\tau)\Rea(\overline{y}v)-(-1)^{k}+(-1)^{k}\tfrac{1-\tau}{2}\bigr|\leq 30 \|E\|.\nonumber
\end{align}
It gives (C\ref{r7}).
The first line of (\ref{equal10}) is valid also 
for $\alpha\in \{0,1\}$, $\beta=\omega=0$ 
(see (\ref{xbarv})).
If $\alpha=1$, then (\ref{eq01t2}) for $\tau=1$ yields
$2c\Rea(\overline{x}u)=1+\epsilon_1$. By applying (\ref{ocenah}) for $\|E\|\leq \frac{1}{2}$ we get $c=(-1)^{k}e^{i\psi}$, $k\in \mathbb{Z}$, $\psi\in (-\frac{\pi}{2},\frac{\pi}{2})$, $|\sin \psi|\leq 2\|E\|$. Moreover, $\big|c-(-1)^{k}\big|=2|\sin \frac{\psi}{2}|\leq 4\|E\|$. To conclude, (\ref{eq01t2}), (\ref{xyuvt2}), (\ref{xbarv}) provides (C\ref{r4}).
\end{enumerate}

\end{enumerate}
 
This completes the proof of the lemma.
\end{proof}

\vspace{-4mm}

\section{Proof of Theorem \ref{izrek}}\label{proofT}

\vspace{-1mm}

To prove the nonexistence of some paths in the closure graph for bundles under (\ref{aAB}), the proof of \cite[Theorem 3.6]{TS2} (the closure graph for orbits) 
applies mutatis mutandis; we shall not rewrite the proof in these cases, instead we refer to \cite{TS2} for the proof.
However, we reprove the existence of paths for bundles consisting of one orbit, since short and plausible arguments can be given (see e.g. (\ref{cPepsi})).

\vspace{-1mm}

\begin{proof}[Proof of Theorem \ref{izrek}.]
Given normal forms $(\widetilde{A},\widetilde{B})$, $(A,B)$ from Lemma \ref{lemalist} the existence of a path $(\widetilde{A},\widetilde{B})\to (A,B)$ in the closure graph for bundles for the action (\ref{aAB}) immediately implies
%
$\widetilde{A}\to A$, $ \widetilde{B}\to B$. 
%
When this is not fulfilled, then $(\widetilde{A},\widetilde{B})\not \to (A,B)$ and we already have a lower estimate on the distance from $(\widetilde{A},\widetilde{B})$ to the bundle of $(A,B)$ (see Lemma \ref{lemapsi2}, Lemma \ref{lemapsi1}). Further, $(\widetilde{A},0_2)\to (A,0_2)$ (or $(0_2,\widetilde{B})\to (0_2,B)$) if and only if $\widetilde{A}\to A$ (or $\widetilde{B}\to B$), and trivially $(A,B)\to (A,B)$ for any $A,B$.

From now on suppose 
$\widetilde{A}\to A$, $ \widetilde{B}\to B\neq 0$ with 
$(\widetilde{A},\widetilde{B})\not \in \Bun_{\Psi}(A,B)$ 
and let
\begin{equation}\label{eqABEF}
cP^{*}AP=\widetilde{A}+E, \quad P^{T}BP=\widetilde{B}+F, \qquad c\in S^{1},\,P\in GL_2(\mathbb{C}),\quad E,F\in \mathbb{C}^{2\times 2}.
\end{equation}
%
Due to Lemma \ref{lemapsi1} and Lemma \ref{lemadet} (\ref{lemadeta}) the first equation of (\ref{eqABEF}) yields restrictions on $P$, $c$, $A$ imposed by $\|E\|$, $\widetilde{A}$. The trick of the proof is to use these to analyse the second equation of (\ref{eqABEF}). 
We now work with equations with larger set of parameters than in \cite[Theorem 3.6]{TS2}, and it usually makes the analysis more involved.
If it eventually leads to 
an inequality that fails for any sufficiently small $E$ and $F$, it will prove $(\widetilde{A},\widetilde{B})\not \to(A,B)$; it is then straightforward to estimate how small $E$, $F$ should be, thus we omit this calculation.
%
Otherwise, to prove $(\widetilde{A},\widetilde{B})\to (A,B)$, we find $c(s)\in S^{1}$, $P(s)\in GL_2(\mathbb{C})$, $(A(s),B(s))\in \Bun (A,B)$ such that
\vspace{-1mm}
\begin{equation}\label{cPepsi}
c(s) \bigl(P(s)\bigr)^*A(s)P(s)-\widetilde{A}=:E(s)\stackrel{s\to 0}{\longrightarrow} 0, \quad \bigl(P(s)\bigr)^TB(s)P(s)-\widetilde{B}=:F(s)\stackrel{s\to 0}{\longrightarrow} 0. 
\end{equation}
%
When we can arrange the parameter $s$ so that $A(s)\to \widetilde{A}$ and $B(s)\to \widetilde{B}$, this is trivial.

Throughout the proof we denote $\delta=\nu\sqrt{\|E\|}$ for $\nu>0$ (Lemma \ref{lemapsi1} (\ref{lemapsi11})), $\epsilon=\|F\|$,
\begin{equation*}
B=\begin{bmatrix}
a & b\\
b & d
\end{bmatrix},\quad 
\widetilde{B}=\begin{bmatrix}
\widetilde{a} & \widetilde{b}\\
\widetilde{b} & \widetilde{d}
\end{bmatrix},\qquad 
F=\begin{bmatrix}
\epsilon_1 & \epsilon_2\\
\epsilon_2 & \epsilon_4
\end{bmatrix},
\qquad
P=\begin{bmatrix}
x & y\\
u & v
\end{bmatrix},
\end{equation*}
where sometimes polar coordinates for $x,y,u,v$ in $P$ might be preferred:
\begin{equation}\label{polarL}
x=|x|^{i\phi},\quad y=|y|e^{i\varphi},\quad u=|u|e^{i\eta},\quad v=|v|e^{i\kappa}, \qquad \phi,\varphi,\eta,\kappa\in \mathbb{R}.
\end{equation}
The second matrix equation of (\ref{eqABEF}) can thus be written componentwise as:
\begin{align}\label{eqBF1}
&ax^2+2bux+du^2=\widetilde{a}+\epsilon_1,\nonumber \\
&axy+buy+bvx+duv=\widetilde{b}+\epsilon_2,\\ 
&ay^2+2bvy+dv^2=\widetilde{d}+\epsilon_4. \nonumber
\end{align}
For the sake of simplicity some estimates in the proof are crude, and it is always assumed $\epsilon,\delta\leq \frac{1}{2}$. Since we shall often apply Lemma \ref{lemadet}, we take for granted that $(\frac{\delta}{\nu})^{2}=\|E\|\leq \min\{1,\frac{|\det\widetilde{A}|}{8\|\widetilde{A}\|+4}\}$, $\epsilon=\|F\|\leq \tfrac{|\det \widetilde{B}|}{4\|\widetilde{B}\|+2}$. If $A,\widetilde{A}$ are nonsingular we also assume $\|E\|\leq \|\widetilde{A}^{-1}\|^{-1}$, while for $B,\widetilde{B}$ nonsingular with $1=|\det A|=|\det \widetilde{A}|=\|\widetilde{A}\|$ it is assumed $\|E\|,\|F\|\leq \frac{|\det \widetilde{B}|}{4(4\max\{1,\|\widetilde{B}\|,|\det \widetilde{B}|\}+2)^{2}}$.

We split our analysis
into several cases (see Lemma \ref{lemalist} for normal forms).
The notation $(\widetilde{A},\widetilde{B})\dashrightarrow (A,B)$ is used when the existence of a path is yet to be considered. 

\begin{enumerate}[label={\bf Case \Roman*.},ref={Case \Roman*},wide=0pt,itemsep=10pt]

\item \label{p1t1t}
$
(1\oplus e^{i\theta},\widetilde{B})\dashrightarrow
(1\oplus e^{i\theta},B)
$, \quad $0\leq \widetilde{\theta}\leq\pi$, $0\leq\theta\leq\pi$ 

\begin{enumerate}[label=(\alph*),wide=0pt,itemindent=2em,itemsep=6pt]

\item \label{p1t1ta}  $0<\widetilde{\theta},\theta<\pi$

From Lemma \ref{lemapsi1} (\ref{lemapsi11}) for (C\ref{r1}) we get 
\begin{equation}\label{thth}
|y|^{2},|u|^{2}\leq \delta, \qquad \bigl||v|^{2}-1\bigr|,\, \bigl||x|^{2}-1\bigr|\leq \delta.
\end{equation}
%
%

\begin{enumerate}[label=(\roman*),wide=0pt,itemindent=4em,itemsep=3pt]

\item \label{p1t1tai}
$B=\begin{bsmallmatrix}
a & b\\
b & 0
\end{bsmallmatrix}$, $b,a\geq 0$

Using (\ref{thth}) and Lemma \ref{lemapsi2} (D\ref{p3ba}) we get a contradiction for small $\epsilon,\delta$ and $\widetilde{d}\neq 0$.
Next, 
Lemma \ref{lemadet} (\ref{pathlema2ii}) for $\widetilde{d}=0$ gives
$b^{2}=|\widetilde{b}|^{2}+\delta_5$, $|\delta_5|\leq \max\{\epsilon,\frac{\delta^{2}}{\nu^{2}}\}\big(4\max\{1,|\widetilde{b}|^{2},|\widetilde{b}|\}+2\big)^{2}$.
It fails for $b=0$, $\widetilde{b}\neq 0$ and  
$\epsilon,\frac{\delta^{2}}{\nu^{2}}<\widetilde{b}^{2}\big(4\max\{1,|\widetilde{b}|^{2},|\widetilde{b}|\}+2\big)^{-2}$, while the case $b=\widetilde{b}=0$ is trivial.
For $a=0$, $\widetilde{a}\neq 0$ then the first equation of (\ref{eqBF1}) for $a=d=0$ and (\ref{thth}) yields an inequality that fails for $\epsilon,\delta$ small enough:
\begin{align*}
&|\widetilde{a}|=|\epsilon_1-2bux|\leq \epsilon+2(\widetilde{b}+\delta_5)\sqrt{\delta(1+\delta)}.
\end{align*}

\item 
$B=\begin{bsmallmatrix}
0 & b\\
b & d
\end{bsmallmatrix}$, $b\geq 0$, $d\neq 0$\\
Due to a symmetry we deal with this case similarly as with \ref{p1t1t} \ref{p1t1ta} \ref{p1t1tai}.

\item \label{p1t1taii} $B=a\oplus d$, \quad $a,d>0$

From (\ref{eqBF1}) for $b=0$ we obtain
\begin{align}\label{eqBFadad}
&ax^2+du^2=\widetilde{a}+\epsilon_1, \nonumber\\
&axy+duv=\widetilde{b}+\epsilon_2, \\
&ay^2+dv^2=\widetilde{d}+\epsilon_4.\nonumber
\end{align}

By multiplying the first and the last equation of (\ref{eqBFadad}) by $\delta_6=\frac{y}{x}$ and $\delta_5=\frac{u}{v}$, respectively, and by slightly simplifying them, we get
\[
axy+duv\delta_5\delta_6=\delta_6(\widetilde{a}+\epsilon_1), \qquad  axy\delta_5\delta_6+duv=\delta_5(\widetilde{d}+\epsilon_4).
\]
Adding these two equations and using the second equation of (\ref{eqBFadad}) we deduce 
\[
(\widetilde{b}+\epsilon_2)(1+\delta_5\delta_6)=\delta_5(\widetilde{d}+\epsilon_4)+\delta_6(\widetilde{a}+\epsilon_1),
\]
which fails for $\widetilde{b}\neq 0$ and sufficiently small $\epsilon,\delta$  (by (\ref{thth}) we have $|\delta_5|,|\delta_6|\leq \frac{\delta}{1-\delta}$ ).


\end{enumerate}

\item \label{p1t1tbb} $\widetilde{\theta}\in \{0,\pi\}$

Set $\sigma=e^{i\widetilde{\theta}}\in\{1,-1\}$. Lemma \ref{lemapsi1}  (C\ref{r99}) yields
\begin{equation}\label{case1s1t}
|x|^2+\sigma|u|^2=(-1)^{k}+\delta_1,\quad \overline{x}y+\sigma \overline{u} v=\delta_2, \quad |y|^2+\sigma |v|^2=\sigma (-1)^{k}+\delta_4, 
\end{equation}
where $|\delta_1|,|\delta_2|,|\delta_4|\leq \delta$.  
Next, for $v\neq 0$,
$\big(|x|-|u|\big)^{2}\leq \big||x|^2-|u|^2\big|=: 1+\delta_1'$
we deduce
\begin{align}\label{ocenaxysuv}
|\overline{x}y+\sigma\overline{u} v|
&\geq 
\big||\overline{x}y|-|\overline{x}v|+|\overline{x}v|-|\overline{u} v|\big|
 \geq |v|\big|\overline{x}|-|\overline{u} |\big|-|\overline{x}|\big||y|-|v|\big| \\
&\geq \tfrac{\big|\overline{x}|^{2}-|\overline{u} |^{2}\big|}{\frac{1}{|v|}(|\overline{x}|+|\overline{u} |)}-\tfrac{|x|\big|\overline{y}|^{2}-|\overline{v} |^{2}\big|}{|\overline{y}|+|\overline{v} |}\nonumber
\geq \tfrac{1-|\delta_1'|}{2\frac{|u|}{|v|}+\frac{\sqrt{1+|\delta_1'|}}{|v|}}-\big(\tfrac{|u|}{|v |}+\tfrac{\sqrt{1+|\delta_1'|}}{|v|}\big)\big||\overline{y}|^{2}-|\overline{v} |^{2}\big|.\nonumber
\end{align}

\vspace{-2mm}


\begin{enumerate}[label=(\roman*),wide=0pt,itemindent=4em,itemsep=3pt]

\item \label{p1t1tbbi} 
$B=\begin{bsmallmatrix}
a & b\\
b & d
\end{bsmallmatrix}$, $a,d,b\geq 0$, $a+d\neq 0$

Let first $B=a\oplus d$.
Using the notation (\ref{polarL}) the following calculation is trivial:
\begin{align}\label{exu2v2}
&ax^{2}+du^{2}= ae^{2i\phi}\big(|x|^{2}+\sigma |u|^{2}\big)-u^{2} (\sigma a e^{2i(\phi-\eta)}-d), \qquad \sigma\in \{-1,1\},\nonumber\\
&ay^{2}+dv^{2}=a e^{2i\varphi}\big(|y|^{2}+\sigma |v|^{2}\big)-v^{2} (\sigma ae^{2i(\varphi-\kappa)}-d),\\
&ay^{2}+dv^{2}=d\sigma e^{2i\kappa}\big(|y|^{2}+\sigma |v|^{2}\big)-y^{2} (\sigma de^{2i(\kappa-\varphi)}-a)\nonumber.
\end{align}
Furthermore, one easily writes:
\begin{align}\label{exuv}
a xy+d uv &=ae^{2i\phi}(\overline{x}y+\sigma\overline{u}v)-uv(\sigma a e^{2i(\phi-\eta)}-d), \qquad \sigma\in \{-1,1\},\nonumber\\
a xy+d uv&=ae^{2i\varphi}(x\overline{y}+\sigma u\overline{v})-uv(\sigma a e^{2i(\varphi-\kappa)}-d),\\
a xy+d uv&=d\sigma e^{2i\kappa}(x\overline{y}+\sigma u\overline{v})-xy(d\sigma e^{2i(\kappa-\varphi)}-a).\nonumber
\end{align}
Rearranging the terms in (\ref{exu2v2}), (\ref{exuv}) and using (\ref{eqBFadad}), (\ref{case1s1t}) yields for $\sigma\in \{-1,1\}$:
\small
\begin{align*}
& u^{2}(\sigma a e^{2i(\phi-\eta)}-d)= ae^{2i\phi}((-1)^{k}+ \delta_1)-\widetilde{a}-\epsilon_1, 
&  uv(\sigma a e^{2i(\phi-\eta)}-d)=ae^{2i\phi} \delta_2-\widetilde{b}- \epsilon_2, \nonumber \\
&v^{2}(\sigma a e^{2i(\varphi-\kappa)}-d)= ae^{2i\varphi}(\sigma (-1)^{k}+ \delta_4)-\widetilde{d}-\epsilon_4,
& uv(\sigma a e^{2i(\varphi-\kappa)}-d)= ae^{2i\varphi}\overline{\delta}_2-\widetilde{b}-\epsilon_2,\nonumber\\
&y^{2}(\sigma d e^{2i(\kappa-\varphi)}-a)= d\sigma e^{2i\kappa}(\sigma (-1)^{k}+ \delta_4)-\widetilde{d}-\epsilon_4,\nonumber
& xy(d\sigma  e^{2i(\kappa-\varphi)}-a)= d\sigma\overline{\delta}_2-\widetilde{b}-\epsilon_2\nonumber.
\end{align*}
\normalsize
By dividing the equations in each line we get
\begin{align}\label{fuvvu}
\tfrac{u}{v}=\tfrac{ae^{2i\phi}((-1)^{k}+ \delta_1)-\widetilde{a}-\epsilon_1}{ae^{2i\phi} \delta_2-\widetilde{b}- \epsilon_2}
=\tfrac{ ae^{2i\varphi}\overline{\delta}_2-\widetilde{b}-\epsilon_2}{ae^{2i\varphi}(\sigma (-1)^{k}+ \delta_4)-\widetilde{d}-\epsilon_4},\qquad
\tfrac{x}{y}
=\tfrac{d\sigma\overline{\delta}_2-\widetilde{b}-\epsilon_2}{d\sigma e^{2i\kappa}(\sigma (-1)^{k}+ \delta_4)-\widetilde{d}-\epsilon_4}.
\end{align}
If $\widetilde{B}=\begin{bsmallmatrix}
0 & \widetilde{b}\\
\widetilde{b} & 0
\end{bsmallmatrix}$ for $\widetilde{b}>0$ (hence $\sigma=-1$, $d\geq a> 0$), then Lemma \ref{lemadet} (\ref{pathlema2ii}) implies $a^{2}\leq ad=\widetilde{b}^{2}+\epsilon'$, $|\epsilon'|\leq \max \{\epsilon,\frac{\delta^{2}}{\nu^{2}}\}\big(4\max\{1,|\widetilde{b}|,|\widetilde{b}|^{2}\}+2\big)^{2}$. From the first equation of (\ref{fuvvu}) for $\widetilde{d}=\widetilde{a}=0$, $\sigma=1$ we now obtain a contradiction for small $\epsilon,\delta$. 
Similarly, it follows from Lemma \ref{lemadet} (\ref{pathlema2ii}) for $\widetilde{B}=\widetilde{a}\oplus \widetilde{d}$ and $B=aI_2$ that $a^{2}=\widetilde{a}\widetilde{d}+\epsilon'$, $|\epsilon'|\leq \max \{\epsilon,\frac{\delta^{2}}{\nu^{2}}\}(4\max\{1,|\widetilde{d}|,|\widetilde{d}\widetilde{a}|\}+2)^{2}$. If $\widetilde{d}>\widetilde{a}> 0$, then the first equation of (\ref{fuvvu}) (with $\sigma\in \{-1,1\}$, $\widetilde{b}=0$) fails as well.   
Next, when $\widetilde{a}=0$ we have $a^{2}=\epsilon'$. Hence (\ref{fuvvu}) for $\sigma\in \{-1,1\}$, $\widetilde{a}=\widetilde{b}=0$ yields
$|\frac{u}{v}|,|\frac{x}{y}|\leq  \frac{\epsilon+\sqrt{\epsilon'}\delta}{\widetilde{d}-\epsilon-\sqrt{\epsilon'}(1+\delta)}$. Further, 
the third equation of (\ref{exu2v2}) with (\ref{eqBFadad}), (\ref{case1s1t}) for $a=d=\sqrt{\epsilon'}$, $\widetilde{b}=0$ gives $\frac{1}{|v|^{2}}\leq \frac{2\epsilon'}{\widetilde{d}-\epsilon-\epsilon'(1+\delta)}$.
We apply this and (\ref{case1s1t}) to (\ref{ocenaxysuv}) to deduce a contradiction for small $\epsilon,\delta$ and $\widetilde{d}\neq 0$.

\quad
Take 
$
P(s)=\frac{1}{\sqrt{\widetilde{d}+\sigma \widetilde{a}}}
\begin{bsmallmatrix}
-i\sqrt{\widetilde{d}} & \sqrt{\widetilde{a}}\\
i\sqrt{\widetilde{a}} & \sigma \sqrt{\widetilde{d}}
\end{bsmallmatrix}
$, 
$B(s)=
\begin{bsmallmatrix}
0 & \sqrt{\widetilde{a}\widetilde{d}}+s\\
\sqrt{\widetilde{a}\widetilde{d}}+s & \widetilde{d}-\sigma \widetilde{a}+s
\end{bsmallmatrix}
$, $c(s)=1$, $e^{i\theta}\to \sigma$
in (\ref{cPepsi}) to see
$
\big(1\oplus \sigma,
\widetilde{a}\oplus \widetilde{d}\big)
\to 
\big(
1\oplus e^{i\theta},
\begin{bsmallmatrix}
0 & b\\
b & d
\end{bsmallmatrix}\big)$, and
$P(s)=\frac{1}{\sqrt{\widetilde{d}+\sigma \widetilde{a}}}
\begin{bsmallmatrix}
i\sqrt{\widetilde{a}} & \sigma \sqrt{\widetilde{d}}\\
-i\sqrt{\widetilde{d}} & \sqrt{\widetilde{a}}
\end{bsmallmatrix}
$, 
$B(s)=
\begin{bsmallmatrix}
\widetilde{d}-\sigma \widetilde{a}+s & \sqrt{\widetilde{a}\widetilde{d}}+s\\
\sqrt{\widetilde{a}\widetilde{d}}+s & 0
\end{bsmallmatrix}
$, 
$c(s)=\sigma$, 
$e^{i\theta}\to \sigma$ to show
$
\big(1\oplus \sigma,
\widetilde{a}\oplus \widetilde{d}\big)
\to 
\big(
1\oplus e^{i\theta},
\begin{bsmallmatrix}
a & b\\
b & 0
\end{bsmallmatrix}\big)$, $0<\theta<\pi$. 

\item \label{p1t1tbbbb} 
$B=\begin{bsmallmatrix}
0 & b\\
b & 0
\end{bsmallmatrix}$, $b>0$

From (\ref{eqBF1}) for $a=d=0$ we obtain
\begin{align}\label{eqadb}
&2bux=\widetilde{a}+\epsilon_1, \nonumber\\
&buy+bvx=\widetilde{b}+\epsilon_2,\\
&2bvy=\widetilde{d}+\epsilon_4. \nonumber
\end{align}
It suffices to take $0\leq\widetilde{a}\leq \widetilde{d}$, $\widetilde{d}> 0$, $\widetilde{b}=0$. 
By Lemma \ref{lemadet} (\ref{pathlema2ii}) and (\ref{ocenakoren}) we have 
$b=\sqrt{\widetilde{a}\widetilde{d}}+\delta_5>0$, 
$|\delta_5|\leq \max\{\epsilon,\frac{\delta^{2}}{\nu^{2}}\}(4\max\{1,\widetilde{d},\widetilde{a}\widetilde{d}\}+2)^{2}$.
Thus (\ref{eqadb}) 
and (\ref{case1s1t})
give:
\begin{equation}\label{ocenavya0}
|v|^{2},|y|^{2}\leq \tfrac{\widetilde{d}+\epsilon}{2(\sqrt{\widetilde{a}\widetilde{d}}+\delta_5)}+1+\delta,\quad
|u|^{2},|x|^{2}\leq \tfrac{\widetilde{a}+\epsilon}{2(\sqrt{\widetilde{a}\widetilde{d}}+\delta_5)}+1+\delta .
\end{equation}
Using Lemma \ref{lemapsi2} (D\ref{p3bd}), (D\ref{p3ba}) for $\det \widetilde{B}=\widetilde{a}\widetilde{d}$ we get
\begin{equation}\label{eqabd}
\begin{array}{ll}
u(\widetilde{d}+\epsilon_4)=v\big(-i(-1)^{l}\sqrt{\widetilde{a}\widetilde{d}}+\epsilon_2''\big)\\
x(\widetilde{d}+\epsilon_4)=y\big(i(-1)^{l}\sqrt{\widetilde{a}\widetilde{d}}+\epsilon_2'\big)
\end{array},
|\epsilon_2'|,|\epsilon_2''|\leq \left\{\begin{array}{ll}
\tfrac{\epsilon (4\max\{\widetilde{d},\widetilde{a}\}+2+\widetilde{d}\widetilde{a})}{\widetilde{d}\widetilde{a}}, & \widetilde{a}\widetilde{d}\neq 0\\
\scriptstyle{\sqrt{\epsilon} (4\max\{\widetilde{d},\widetilde{a}\}+3)^{\frac{1}{2}}}, & \widetilde{a}\widetilde{d}= 0
\end{array}\right..\\
\end{equation}
%
%
%
By further applying the first and the third equality of (\ref{case1s1t}) we deduce
\begin{align}\label{x2u2ad}
(-1)^{k}+\delta_1=&|x|^2+\sigma|u|^2=\tfrac{|(-1)^{l}\sqrt{\widetilde{a}\widetilde{d}}+\epsilon_2'|^{2}}{|\widetilde{d}+\epsilon_4|^{2}}|y|^{2}+\sigma\tfrac{|-(-1)^{l}\sqrt{\widetilde{a}\widetilde{d}}+\epsilon_2''|^{2}}{|\widetilde{d}+\epsilon_4|^{2}}|v|^{2}=\\
                 =&\tfrac{|-(-1)^{l}\sqrt{\widetilde{a}\widetilde{d}}+\epsilon_2'|^{2}}{|\widetilde{d}+\epsilon_4|^{2}}\big(\sigma(-1)^{k}+\delta_4\big)+\delta'|v|^{2}\nonumber
\end{align}
with $|\delta'|\leq C\max\{\epsilon,\delta\}$, where $C>0$ is a constant that can be computed easily. 

\quad
By combining (\ref{x2u2ad}) and (\ref{ocenavya0}) we obtain a contradiction for $0<\widetilde{a}< \widetilde{d}$ and sufficiently small $\epsilon, \delta$.
Next, let $\widetilde{a}=0$, $\widetilde{d}>0$. 
From (\ref{eqabd}) 
it follows $|\frac{u}{v}|\leq \frac{|\epsilon_2'|}{|\widetilde{d}|-\epsilon}$, $|\frac{x}{y}|\leq \frac{|\epsilon_2''|}{|\widetilde{d}|-\epsilon}$ 
($y=0$ or $v=0$ would contradict (\ref{eqadb}) for $|\widetilde{d}|> \epsilon$). By applying this with (\ref{case1s1t}) and (\ref{x2u2ad}) (hence $|v|$ is large) to  (\ref{ocenaxysuv}), we obtain a contradiction for small $\epsilon, \delta$.

\quad
We take 
$
P(s)=\frac{1}{\sqrt{2}}
\begin{bsmallmatrix}
1 & i\\
1 & -i
\end{bsmallmatrix}
$, 
$B(s)=(\widetilde{d}+s)
\begin{bsmallmatrix}
0 & 1\\
1 & 0
\end{bsmallmatrix}
$ and $c(s)=1$, $e^{i\theta}\to 1$
in (\ref{cPepsi}) to prove
$
\big(I_2,
\widetilde{d}I_2\big)
\to 
\big(
1\oplus e^{i\theta},
\begin{bsmallmatrix}
0 & b\\
b & 0
\end{bsmallmatrix}\big)$, $b>0$, $0\leq \widetilde{d}$, $0<\theta<\pi$.
%
%
%
Using (\ref{case1s1t}) for $\sigma=-1$ leads to
\begin{equation}\label{eq88}
\delta\geq |\overline{x}y-\overline{u}v|\geq \big||x|^{2}|\tfrac{y}{x}|-|u|^{2}|\tfrac{v}{u}|\big|\geq \big||x|^{2}-|u|^{2}\big|-|x|^{2}\big(1-|\tfrac{y}{x}|\big)-|u|^{2}\big(1-|\tfrac{v}{u}|\big).
\end{equation}
From (\ref{eqabd}) we get that $|\frac{y}{x}|,|\frac{v}{u}|$ are close to $1$, and (\ref{ocenavya0}) implies that $|u|^{2},|x|^{2}$ are bounded.
Thus the last two terms on the right-hand side of (\ref{eq88}) are small, while the first one is close to $1$ (see (\ref{case1s1t}) for $\sigma=-1$). For small $\epsilon,\delta$ we get a contradiction. 

\end{enumerate}

\end{enumerate}


\item \label{p01t001t0}
$
\bigl(\begin{bsmallmatrix}
0 & 1\\
\widetilde{\tau} & 0 
\end{bsmallmatrix},
\begin{bsmallmatrix}
\widetilde{a} & \widetilde{b}\\
\widetilde{b} & \widetilde{d}
\end{bsmallmatrix}\bigr)\dashrightarrow
\bigl(\begin{bsmallmatrix}
0 & 1\\
\tau & 0
\end{bsmallmatrix},
\begin{bsmallmatrix}
a & b\\
b & d
\end{bsmallmatrix}\bigr)
$, \quad $\widetilde{b},b \geq 0$, $(\widetilde{\tau},\tau)\in \big([0,1)\times (0,1)\big)\cup \{(0,0)\}$ 

\quad
By Lemma \ref{lemapsi1} (\ref{lemapsi11}) for (C\ref{r6}) we have
\begin{equation}\label{1txv}
|xu|,|yu|,|vy|\leq \delta, \qquad \big||vx|-1\big|\leq \delta.
\end{equation}
%
It yields $\delta_6=\frac{y}{x}=\frac{yv}{xv}$ with $|\delta_6|\leq\frac{\delta}{1-\delta}\leq 2\delta$, $\delta_5=\frac{u}{v}=\frac{ux}{xv}$ with $|\delta_5|\leq\frac{\delta}{1-\delta}\leq 2\delta$ and 
$\delta_7=\frac{uy}{vx}$ with $|\delta_7|\leq 2\delta$ (note $\delta\leq \frac{1}{2}$).

\begin{enumerate}[label=(\alph*),wide=0pt,itemindent=2em,itemsep=6pt]

\item \label{p01t0prva}
$B=\begin{bsmallmatrix}
0 & b\\
b & d
\end{bsmallmatrix}$, $b\geq 0$, $|d|\in \{0,1\}$, $|b|+|d|\neq 0$

%
%
By multiplying the last two equations of (\ref{eqBFbasic1}) by $\delta_5=\frac{u}{v}$ and using $\delta_7=\frac{uy}{vx}$ we get 
\begin{align}\label{eq01t0druga}
du^{2}+(1+\delta_7)bux=(\widetilde{b}+\epsilon_2)\delta_5, \qquad
2\delta_7 bvx+dvu=(\widetilde{d}+\epsilon_4)\delta_5. 
\end{align}
Subtracting the first and the second equation of (\ref{eq01t0druga}) from the first and the second equation of (\ref{eqBFbasic1}) (in the form $duv+b(1+\delta_7)vx=\widetilde{b}+\epsilon_2$), we deduce
\begin{align}\label{eq01t0tretja}
(1-\delta_7)bux=\widetilde{a}+\epsilon_1-(\widetilde{b}+\epsilon_2)\delta_5,\quad 
(1-\delta_7)bvx=\widetilde{b}+\epsilon_2-(\widetilde{d}+\epsilon_4)\delta_5.
\end{align}
%
It is clear that the first (the second) equality in (\ref{eq01t0tretja}) fails for $\widetilde{a}\neq 0$ (for $\widetilde{b}\neq 0$) and $b=0$, provided that $\epsilon,\delta$ are sufficiently small.
Next, from the second equation of (\ref{eq01t0tretja}) and using $vx=e^{i\vartheta}-\delta_0$ with $|\delta_0|\leq \delta$, $\vartheta\in \mathbb{R}$ (see (\ref{1txv})) we obtain
\begin{align}\label{eqabwb}
b=\tfrac{\widetilde{b}+\epsilon_2-(\widetilde{d}+\epsilon_4)\delta_5}{(1-\delta_7)(e^{i\vartheta}-\delta_0)}=e^{-i\vartheta}\widetilde{b}+\tfrac{e^{-i\vartheta}\widetilde{b}(\delta_7+e^{i\vartheta}\delta_0-\delta_0\delta_7)+\epsilon_2-(\widetilde{d}+\epsilon_4)\delta_5}{(1-\delta_7)(e^{i\vartheta}-\delta_0)}
\end{align}
From (\ref{eqabwb}) and $|ux|\leq \delta$ (and $|yv|\leq \delta$) we get that the first equation of (\ref{eq01t0tretja}) fails for $\widetilde{a}\neq 0$ (the last equation of (\ref{eqBFbasic1}) fails for $\widetilde{d}\neq 0$, $d=0$), and $\epsilon,\delta$ small enough. 

\quad
Finally, it is easy to check that 
$
P(s)=
\begin{bsmallmatrix}
s^{-1} & 0\\
s^2 & s
\end{bsmallmatrix}
$, 
$B(s)=
\begin{bsmallmatrix}
0 & b(s)\\
b(s) & d
\end{bsmallmatrix}
$ with $b(s)\to \widetilde{b}$, 
$A(s)=
\begin{bsmallmatrix}
0 & 1\\
\widetilde{\tau}+s & 0
\end{bsmallmatrix}
$, $c(s)=1$
in (\ref{cPepsi}) proves
$
\big(\begin{bsmallmatrix}
0 & 1\\
\widetilde{\tau} & 0
\end{bsmallmatrix},
\begin{bsmallmatrix}
0 & \widetilde{b}\\
\widetilde{b} & 0
\end{bsmallmatrix}\big)
\to 
\big(
\begin{bsmallmatrix}
0 & 1\\
\tau & 0
\end{bsmallmatrix},
\begin{bsmallmatrix}
0 & b\\
b & d
\end{bsmallmatrix}\big)$, $b\geq \widetilde{b}\geq 0$.

\vspace{-2mm}

\item \label{p01t0druga}
$B=\begin{bsmallmatrix}
1 & b\\
b & 0
\end{bsmallmatrix}$, $b\geq 0$, $\tau=0$ 

We argue similarly as in \ref{p01t001t0} \ref{p01t0prva}. We have equations (\ref{eqBF5obrat});
%
%
by multiplying the first two equations by $\delta_6=\frac{y}{x}$ and using $\delta_7=\frac{uy}{vx}$ we obtain 
\begin{align}\label{eq01t0druga22}
ay^{2}+(1+\delta_7)bvy=(\widetilde{b}+\epsilon_2)\delta_6, \qquad
2\delta_7 bvx+axy=(\widetilde{a}+\epsilon_1)\delta_6. 
\end{align}
Subtracting the first and the second equation of (\ref{eq01t0druga22}) from the last and the second equation of (\ref{eqBFbasic1}) (written as $axy+b(1+\delta_7)vx=\widetilde{b}+\epsilon_2$), respectively, we get
\begin{align}\label{eq01t0tretja33}
(1-\delta_7)bvy=\widetilde{d}+\epsilon_4-(\widetilde{b}+\epsilon_2)\delta_5,\quad 
(1-\delta_7)bvx=\widetilde{b}+\epsilon_2-(\widetilde{a}+\epsilon_1)\delta_6.
\end{align}
%
The first (the second) equality in (\ref{eq01t0tretja33}) fails for $\widetilde{d}\neq 0$ (for $\widetilde{b}\neq 0$) and $b=0$, provided that $\epsilon,\delta$ are sufficiently small.
We obtain a similar expression for $b$ as in (\ref{eqabwb}). It yields a contradiction for $b=0$, $\widetilde{b}\neq 0$ and $\delta,\epsilon$ small enough, while by combining it with $|yv|\leq \delta$ (and $|ux|\leq \delta$) we contradict the first equation of (\ref{eq01t0tretja33}) for $\widetilde{d}\neq 0$ (or (\ref{eqBF5obrat}) for $\widetilde{a}\neq 0$, $a=0$), provided that $\epsilon,\delta$ are small. 
Take 
$
P(s)=
\begin{bsmallmatrix}
s & s^2\\
0 & s^{-1}
\end{bsmallmatrix}
$,
$B(s)=
\begin{bsmallmatrix}
1 & b(s)\\
b(s) & 0
\end{bsmallmatrix}
$, $b(s)\to \widetilde{b}$, $c(s)=1$
in (\ref{cPepsi}) to prove
$
\big(\begin{bsmallmatrix}
0 & 1\\
0 & 0
\end{bsmallmatrix},
\begin{bsmallmatrix}
0 & \widetilde{b}\\
\widetilde{b} & 0
\end{bsmallmatrix}\big)
\to 
\big(
\begin{bsmallmatrix}
0 & 1\\
0 & 0
\end{bsmallmatrix},
\begin{bsmallmatrix}
1 & b\\
b & 0
\end{bsmallmatrix}\big)$, $b\geq \widetilde{b}$.

\item \label{p01t0tretja}
$B=1\oplus d$, $d\in\mathbb{C}$ ($0<\tau<1$) or $B=a\oplus 1$, $a>0$ ($\tau=0$)

%
%
Since $|\frac{y}{x}|\leq \frac{\delta}{1-\delta}$ and $|\frac{u}{v}|\leq \frac{\delta}{1-\delta}$ the same proof as in \ref {p1t1t} \ref{p1t1ta} \ref{p1t1taii} applies.

\quad
From (\ref{cPepsi}) for 
$
P(s)=
\begin{bsmallmatrix}
s & s^2\\
0 & s^{-1}
\end{bsmallmatrix}
$,
$B(s)=
1\oplus s^{2}\widetilde{d}
$ 
and $
P(s)=
\sqrt{\widetilde{a}}\oplus \frac{1}{\sqrt{\widetilde{a}}}
$,
$B(s)=
1\oplus \widetilde{a}\widetilde{d}
$ 
with $\tau\to\widetilde{\tau}$, $c(s)=1$,
in (\ref{cPepsi}) we obtain 
$
\big(\begin{bsmallmatrix}
0 & 1\\
\widetilde{\tau} & 0
\end{bsmallmatrix},
\widetilde{a}\oplus \widetilde{d}\big)
\to 
\big(
\begin{bsmallmatrix}
0 & 1\\
\tau & 0
\end{bsmallmatrix},
1\oplus d\big)$ (with $0<\tau<1$) for $\widetilde{a}=0$ and $\widetilde{a}>0$, respectively.
Finally,
$
P(s)=
\begin{bsmallmatrix}
s^{-1} & 1\\
s^2 & s
\end{bsmallmatrix}
$,
$B(s)=
(\widetilde{d}s^{2}+s^{3})\oplus 1
$ 
with $c(s)=1$ gives 
$
\big(\begin{bsmallmatrix}
0 & 1\\
0 & 0
\end{bsmallmatrix},
\widetilde{d}\oplus 0\big)
\to 
\big(
\begin{bsmallmatrix}
0 & 1\\
0 & 0
\end{bsmallmatrix},
a\oplus 1\big)$, $a>0$, $\widetilde{d}\in \{0,1\}$.

\vspace{-2mm}

\item
$B=\begin{bsmallmatrix}
e^{i\varphi} & b\\
b & \zeta
\end{bsmallmatrix}$, $\zeta\in \mathbb{C}$, $\varphi\in [0,\pi)$, $\tau\in (0,1)$ or 
$B=\begin{bsmallmatrix}
\zeta^{*} & b\\
b & 1
\end{bsmallmatrix}$,
$\zeta^{*}\in \mathbb{C}^{*}$, $\tau=0$; $b>0$

Let $B=\begin{bsmallmatrix}
e^{i\varphi} & b\\
b & \zeta
\end{bsmallmatrix}$, $\zeta\in \mathbb{C}$, $0\leq \varphi<\pi$.
If $\widetilde{B}$ is either 
$\begin{bsmallmatrix}
0 & \widetilde{b}\\
\widetilde{b} & \widetilde{d}
\end{bsmallmatrix}$ or 
$\begin{bsmallmatrix}
\widetilde{\zeta} & \widetilde{b}\\
\widetilde{b} & 1
\end{bsmallmatrix}$ with $\widetilde{\zeta}\neq 0$ we 
take 
$
P(s)=
\begin{bsmallmatrix}
s & s^{2}\\
1 & s^{-1}
\end{bsmallmatrix}
$,
$B(s)=
\begin{bsmallmatrix}
e^{i\varphi} & \widetilde{b}+s\\
\widetilde{b}+s & \widetilde{d}s^{2}
\end{bsmallmatrix}
$ or
$
P(s)=
|\widetilde{\zeta}|e^{i\frac{k\pi}{2}}\oplus \frac{1}{|\widetilde{\zeta}|}e^{i\frac{k\pi}{2}}
$, $c(s)=(-1)^{k}$,
$B(s)=
\begin{bsmallmatrix}
e^{i\varphi} & \widetilde{b}+s\\
\widetilde{b}+s & (-1)^{k}|\widetilde{\zeta}|^{2}
\end{bsmallmatrix}
$ with $\arg \widetilde{\zeta}=\arg (\varphi+k\pi)$ in (\ref{cPepsi}) to get a path.
%
%
Next,
$B(s)=
\begin{bsmallmatrix}
\widetilde{a}s^{2}+s^{3} & \widetilde{b}+s\\
\widetilde{b}+s & 1
\end{bsmallmatrix}
$, 
$c(s)=1$,
$
P(s)=
\begin{bsmallmatrix}
s^{-1} & 1\\
s^2 & s
\end{bsmallmatrix}
$
shows
$
\big(\begin{bsmallmatrix}
0 & 1\\
0 & 0
\end{bsmallmatrix},
\begin{bsmallmatrix}
\widetilde{a} & \widetilde{b}\\
\widetilde{b} & 0
\end{bsmallmatrix}\big)
\to 
\big(
\begin{bsmallmatrix}
0 & 1\\
0 & 0
\end{bsmallmatrix},
\begin{bsmallmatrix}
a & b\\
b & 1
\end{bsmallmatrix}\big)$, $\widetilde{b}\geq 0$, $\widetilde{a}\in \{0,1\}$.

\end{enumerate}

\item \label{p1-1v0110} 
$
(1\oplus -1
,\widetilde{B})
\dashrightarrow
\big(\begin{bsmallmatrix}
0 & 1\\
\tau & 0
\end{bsmallmatrix},
B\big)
$, \quad $0<\tau \leq 1$

Lemma \ref{lemapsi1} (\ref{lemapsi11}) with (C\ref{r7}) for $\alpha=-\omega=1$, $\beta=0$ gives ($|\delta_1|,|\delta_2|,|\delta_4|\leq \delta$):

\vspace{-4mm}

\begin{equation}\label{eqBF8}
2\Rea (\overline{x}u)=(-1)^{k}+\delta_1, \quad 2\Rea (\overline{y}v)=-(-1)^{k}+\delta_2, \quad \overline{x}v+\overline{u}y=\delta_4, \quad 1-\tau, \quad k\in \mathbb{Z},
\end{equation}

\vspace{-1mm}
Observe that $u,v\neq 0$, otherwise (\ref{eqBF8}) fails. We compute
\begin{align}\label{xvuyre}
\overline{x}v+\overline{u}y  
&=e^{-2i\phi}(xv-yu) +2\cos (\phi-\eta)e^{-i(\phi+\eta)}uy
=e^{-2i\phi}\det P +\Rea (\overline{x}u)\tfrac{y}{x}, \\
%
%
\label{xvuyre2}
\overline{x}v+\overline{u}y 
&=-e^{-2i\eta}\det P +2\cos (\phi-\eta)e^{-i(\phi+\eta)}vx
=-e^{-2i\eta}\det P +\Rea (\overline{x}u)\tfrac{v}{u}.
\end{align}
Therefore, by combining (\ref{xvuyre}) and (\ref{xvuyre2}) with (\ref{eqBF8}) we obtain
\begin{equation}\label{yoxvou}
\tfrac{y}{x}=\tfrac{\delta_4-e^{-2i\phi}\det P}{(-1)^{k}+\delta_1}, \qquad 
\tfrac{v}{u}=\tfrac{\delta_4+e^{-2i\eta}\det P}{(-1)^{k}+\delta_1}.
\end{equation}

\begin{enumerate}[label=(\alph*),wide=0pt,itemindent=2em,itemsep=6pt]


\item \label{p1-1v0110c}
$B=a\oplus d$, \quad $a\geq 0$ 

%


Equations (\ref{eqBFadad}) and (\ref{yoxvou}) yield
\begin{align}\label{baxdydet}
\widetilde{b}+\epsilon_2= axy+duv=& \tfrac{1}{(-1)^{k}+\delta_1}\big(ax^{2}(\delta_4-e^{-2i\phi}\det P)+du^{2}(\delta_4+e^{-2i\eta}\det P)  \big)\nonumber\\
          =& \tfrac{1}{(-1)^{k}+\delta_1}\big(\delta_4(ax^{2}+du^{2})+
             \det P(-ax^{2}e^{-2i\phi}+du^{2}e^{-2i\eta})  \big)\\
          =&  \tfrac{1}{(-1)^{k}+\delta_1}\big(\delta_4(\widetilde{a}+\epsilon_1)+
             \det P(-a|x|^{2}+d|u|^{2})  \big),  \nonumber
\end{align}
and further for $a,\widetilde{a}\in \mathbb{R}$:

\vspace{-3mm}

\begin{align}\label{y2v2x2u2}
\widetilde{d}+\epsilon_4=& ay^{2}+dv^{2}=\tfrac{1}{((-1)^{k}+\delta_1)^{2}}\big(ax^{2}(\delta_4-e^{-2i\phi}\det P)^{2}+du^{2}(\delta_4+e^{-2i\eta}\det P)^{2}  \big)\nonumber\\
          =& \tfrac{1}{((-1)^{k}+\delta_1)^{2}}\big(  \delta_4^{2}(ax^{2}+du^{2})+
            2\delta_4 \det P(-a|x|^{2}+d|u|^{2}) +(\det P)^{2}(a\overline{x}^{2}+d\overline{u}^{2})  \big)\nonumber\\          
           =&  \tfrac{1}{((-1)^{k}+\delta_1)^{2}}\Big(   \delta_4^{2}(\widetilde{a}+\epsilon_1)+
            2\delta_4 \big((\widetilde{b}+\epsilon_2)((-1)^{k}+\delta_1)-\delta_4(\widetilde{a}+\epsilon_1)\big)\\
            &\qquad \qquad +(\det P)^{2}(\widetilde{a}+\overline{\epsilon}_1+2i\Ima (d)\overline{u}^{2})  \Big).   \nonumber
\end{align}
The equation (\ref{baxdydet}) gives ($a\in\mathbb{R}$):
\begin{equation}\label{detPImd}
\Ima (d)|u|^{2}=\Ima \left(\tfrac{1}{\det P}\big((\widetilde{b}+\epsilon_2)((-1)^{k}+\delta_1)-\delta_4(\widetilde{a}+\epsilon_1)\big)\right).
\end{equation}
Lemma \ref{lemadet} (\ref{lemadeta}) yields $|\det P|\geq \frac{1-\frac{6\delta^{2}}{\nu^{2}}}{\sqrt{1-\delta}}$ (note $1-\tau\leq \delta$ by (\ref{eqBF8}). It follows for $\widetilde{b}=0$ that $|\Ima (d)u^{2}|\leq \nu^{2}\frac{\epsilon (1+\delta)^{\frac{3}{2}}+\delta(\widetilde{a}+\epsilon)\sqrt{1+\delta}}{\nu^{2}-6\delta^{2}}$. It contradicts (\ref{y2v2x2u2}) for $\widetilde{a}<\widetilde{d}$, $\widetilde{b}=0$ and $\epsilon,\delta$ small enough.
Next,
$c(s)=1$, $P(s)=\sqrt{\frac{\widetilde{b}}{2}}\begin{bsmallmatrix}
1 & 1\\
\widetilde{b}^{-1} & -\widetilde{b}^{-1}
\end{bsmallmatrix}$, $B(s)=1\oplus \widetilde{b}^{2}e^{is}$ (or $B(s)=1\oplus -\widetilde{b}^{2}e^{-is}$) yields a path from 
$(1\oplus -1, \widetilde{b}I_2)$, $\widetilde{b}> 0$ (from $\big(1\oplus -1, 
\begin{bsmallmatrix}
0 & \widetilde{b}\\
\widetilde{b} & 0
\end{bsmallmatrix}\big)$, $\widetilde{b}> 0$) to $\big(\begin{bsmallmatrix}
0 & 1\\
\tau & 0
\end{bsmallmatrix},1 \oplus d\big)$, $\Ima (d)>0$.
%
%
%
%
For $P(s)=\frac{1}{2}\begin{bsmallmatrix}
2s & s^{-1}\\
2s & -s^{-1}
\end{bsmallmatrix}$
we get
$(1\oplus -1,0_2)\to 
\big(\begin{bsmallmatrix}
0 & 1\\
1 & 0
\end{bsmallmatrix},1 \oplus 0\big)
$.
%
%
%
%

\item 
$
B=\begin{bsmallmatrix}
a & b\\
b & d
\end{bsmallmatrix}
$, $b>0$ 

Let $\widetilde{B}=\begin{bsmallmatrix}
0 & \widetilde{b}\\
\widetilde{b} & 0
\end{bsmallmatrix}$, $\widetilde{b}>0$. For $a=0$ we have 
$b^{2}=
\widetilde{b}^{2}-(1-\tau)\widetilde{b}^{2}+\epsilon'$ with $1-\tau\leq \delta$, $|\epsilon'|\leq \max\{\epsilon,\frac{\delta^{2}}{\nu^{2}}\}(4\max\{\widetilde{b},\widetilde{b}^{2},1\}+2)^{2}$ (Lemma \ref{lemadet} (\ref{pathlema2ii})).
If
$d=e^{i\varphi}$ with $\varphi<\pi$, 
the proof in \cite[Theorem 3.6, Case VII. (b) (i)]{ST} applies, while for $d=0$ the first equation of (\ref{eqadb}) for $\widetilde{a}=0$ and (\ref{eqBF8}) yield $b(1-\delta)\leq 2b|ux|\leq \epsilon$, which fails for small $\epsilon,\delta$.
%
%

\quad
Suppose
$\widetilde{B}=\widetilde{a}\oplus \widetilde{d}$ for 
$0\leq \widetilde{a}\leq\widetilde{d}$.
If $d=e^{i\varphi}$ the proof in \cite[Theorem 3.6, Case VII. (b) (ii)]{ST} for $\widetilde{a}\neq \widetilde{d}$ applies almost mutatis mutandis, we only replace $\frac{|\det \widetilde{A}|}{|\det A|}=\frac{|\det \widetilde{B}|}{|\det B|}=|\frac{\widetilde{a}\widetilde{d}}{b^{2}}|=1$ with $b^{2}=\widetilde{a}\widetilde{d}-(1-\tau)\widetilde{a}\widetilde{d}+\epsilon'$, $1-\tau\leq \delta$,
$|\epsilon'|\leq \max\{\epsilon,\frac{\delta^{2}}{\nu^{2}}\}(4\max\{\widetilde{d},\widetilde{a}\widetilde{d},1\}+2)^{2}$ (Lemma \ref{lemadet} (\ref{pathlema2ii})). If $d=0$, the first equation of (\ref{eqadb}) for $\widetilde{a}=0$ and (\ref{eqBF8}) give $(\widetilde{a}\widetilde{d}-\delta\widetilde{a}\widetilde{d}-|\epsilon'|)(1-\delta)^{2}\leq 4b^{2}|ux|^{2}\leq |\widetilde{a}+\epsilon|^{2}$, which fails for small $\epsilon,\delta$.
Note, $c(s)=1$, $P(s)=\begin{bsmallmatrix}
\frac{1}{2s} & -\frac{i}{2s}\\
s & is
\end{bsmallmatrix}$, $B(s)=\begin{bsmallmatrix}
0 & \widetilde{d}+s\\
\widetilde{d}+s & 1
\end{bsmallmatrix}$ in (\ref{cPepsi})    
implies
$(1\oplus -1,\widetilde{d}I_2)\to  
\big(\begin{bsmallmatrix}
0 & 1\\
1 & 0
\end{bsmallmatrix},
\begin{bsmallmatrix}
0 & b\\
b & 1
\end{bsmallmatrix}\big)$ for $\widetilde{d}\geq 0$.
By conjugating with 
$\frac{1}{2}\begin{bsmallmatrix}
1 & -2\\
-1 & -2
\end{bsmallmatrix}$ and $r\oplus \frac{1}{r}$ for $r>0$, we get a path
\vspace{-1mm}
\[
(1\oplus -1,\widetilde{a}\oplus\widetilde{d})\approx  
\big(\begin{bsmallmatrix}
0 & 1\\
1 & 0
\end{bsmallmatrix},
\tfrac{1}{4}\begin{bsmallmatrix}
\widetilde{a}+\widetilde{d} & 2(\widetilde{d}-\widetilde{a})\\
2(\widetilde{d}-\widetilde{a}) & 4(\widetilde{a}+\widetilde{d})
\end{bsmallmatrix}\big)\to  
\big(\begin{bsmallmatrix}
0 & 1\\
\tau & 0
\end{bsmallmatrix},
\begin{bsmallmatrix}
r^{2}e^{i\varphi} & b\\
b & {r^{-2}\zeta}
\end{bsmallmatrix}\big)
\approx 
\big(\begin{bsmallmatrix}
0 & 1\\
\tau & 0
\end{bsmallmatrix},\begin{bsmallmatrix}
e^{i\varphi} & b\\
b & \zeta
\end{bsmallmatrix}\big).
\]

\end{enumerate}

\vspace{-3mm}

\item \label{p1i}
$
\big(
\begin{bsmallmatrix}
0 & 1\\
1 & i
\end{bsmallmatrix},
\widetilde{B}\big)\dashrightarrow
\big(\begin{bsmallmatrix}
0 & 1\\
1 & i
\end{bsmallmatrix},
B\big)
$\\
Lemma \ref{lemapsi1} (\ref{lemapsi11}) with (C\ref{r5}) for, $\beta=1$, $\omega=i$, $\alpha=k=0$ (since $||v|^{2}-(-1)^{k}|<\delta$) gives
\begin{align}\label{lemapsi11i}
\big|\overline{x}v+\overline{u}y-1\big|\leq \delta ,\quad |u|^2\leq \delta, \quad \big||v|^{2}-1\big|\leq \delta, \quad |\Rea (\overline{x}u)|,|\Rea (\overline{y}v)|\leq \delta.
\end{align}

\begin{enumerate}[label=(\alph*),wide=0pt,itemindent=2em,itemsep=6pt]
\item \label{p1ia}
$B=a\oplus d$, $a\geq 0$, $d\in \mathbb{C}$

It is not difficult to check that
$B(s)=
s \oplus \frac{\widetilde{b}^{2}}{s}
$, $c(s)=1$,
$
P(s)=e^{-i\frac{\pi}{4}}
\begin{bsmallmatrix}
1 & i\widetilde{b}s^{-1}\\
s^2e^{i\frac{\pi}{4}} & 1
\end{bsmallmatrix}
$
in (\ref{cPepsi}) proves
$
\big(\begin{bsmallmatrix}
0 & 1\\
1 & i
\end{bsmallmatrix},
\begin{bsmallmatrix}
0 & \widetilde{b}\\
\widetilde{b} & 0
\end{bsmallmatrix}\big)
\to 
\big(
\begin{bsmallmatrix}
0 & 1\\
1 & i
\end{bsmallmatrix},
a\oplus d\big)$, $d\in \mathbb{C}$, $a>0$, $\widetilde{b}\geq 0$.

\quad
Next, let $B=0\oplus d$, $d>0$, $\widetilde{B}=\widetilde{a}\oplus \widetilde{d}$, $\widetilde{a}>0$.
Using (\ref{eqBFadad}) for $a=0$ and $|u|^2\leq \delta $ 
we get
%
%

\vspace{-3mm}
\[
 \widetilde{a}+\epsilon\leq |du^{2}|\leq d\delta , \qquad d(1-\delta)\leq |dv^{2}|\leq \epsilon+|\widetilde{d}|.
\]

\vspace{-2mm}
Hence $\frac{|\widetilde{d}|+\epsilon}{1-\delta}\delta \geq \widetilde{a}+\epsilon$, which fails for sufficiently small $\epsilon,\delta$.

\item \label{p1ib}
$B=\begin{bsmallmatrix}
0 & b\\
b & 0
\end{bsmallmatrix}$, $b>0$, \quad (hence $\widetilde{B}=\widetilde{a}\oplus \widetilde{d}$ by Lemma \ref{lemalist})

The proof in \cite[Theorem 3.6, Case V. (b)]{ST} applies mutatis mutandis. 
Note, 
$B(s)=
\frac{\widetilde{d}s}{2}\begin{bsmallmatrix}
0 & 1\\
1 & 0
\end{bsmallmatrix}
$, 
$
P(s)=e^{-i\frac{\pi}{4}}
\begin{bsmallmatrix}
e^{i\frac{\pi}{4}}s & s^{-1}\\
s & i
\end{bsmallmatrix}
$
in (\ref{cPepsi}) implies
$
\big(\begin{bsmallmatrix}
0 & 1\\
1 & i
\end{bsmallmatrix},
0\oplus \widetilde{d}\big)
\to 
\big(
\begin{bsmallmatrix}
0 & 1\\
1 & i
\end{bsmallmatrix},
\begin{bsmallmatrix}
0 & b\\
b & 0
\end{bsmallmatrix}\big)$, 
$\widetilde{d}>0$. 
%
%
%
\end{enumerate}

\vspace{-1mm}

\item \label{p011001i}
$
\big(\begin{bsmallmatrix}
0 & 1\\
1 & 0
\end{bsmallmatrix},
\widetilde{B}\big)
\dashrightarrow
\big(\begin{bsmallmatrix}
0 & 1\\
1 & i
\end{bsmallmatrix},
B\big)
$

Lemma \ref{lemapsi1} (\ref{lemapsi11}) with (C\ref{r5}) for $\alpha=\omega=0$, $\beta=0$ yields
\begin{equation}\label{ocena2psi01}
|u|^{2},|v|^{2}\leq \delta,\quad \big|2\Rea(\overline{y}v)\big|\leq \delta, \quad \big|2\Rea(\overline{x}u)\big|\leq \delta, \quad \big|\overline{x}v+\overline{u}y-(-1)^{k}\big|\leq \delta,\,k\in \mathbb{Z}. 
\end{equation}

\begin{enumerate}[label=(\alph*),wide=0pt,itemindent=2em,itemsep=6pt]
\item \label{p011001ia}
$
B=a\oplus d$, $a\geq 0$\\ 
%
%
Taking $c(s)=1$, $P(s)=\begin{bsmallmatrix}
1 & s^{-1}\\
s & 0
\end{bsmallmatrix}$, $B(s)=0\oplus \frac{1}{s}$ in (\ref{cPepsi}) proves 
$\big(\begin{bsmallmatrix}
0 & 1\\
1 & 0
\end{bsmallmatrix},\widetilde{a}\oplus 0\big)\to\big(\begin{bsmallmatrix}
0 & 1\\
1 & i
\end{bsmallmatrix},0\oplus d \big)$, $\widetilde{a}\in \{0,1\}$, $d>0$.
Next, $c(s)=1$, $P(s)=e^{i\frac{1}{2}\widetilde{\vartheta}}\begin{bsmallmatrix}
1 & s^{-1}\\
s & 0
\end{bsmallmatrix}$, $B(s)=(|\widetilde{d}|+s)s^{2}\oplus \frac{1}{s^{2}}e^{-i\widetilde{\theta}}$  yields 
$\big(\begin{bsmallmatrix}
0 & 1\\
1 & 0
\end{bsmallmatrix},1\oplus \widetilde{d}\big)\to\big(\begin{bsmallmatrix}
0 & 1\\
1 & i
\end{bsmallmatrix},a\oplus d \big)$, $\widetilde{d}=|\widetilde{d}|e^{i\widetilde{\vartheta}}$, $a>0$, $d\in \mathbb{C}$.

\quad
Proceed with 
$\widetilde{b}=\begin{bsmallmatrix}
0 & \widetilde{b}\\
\widetilde{b} & 1
\end{bsmallmatrix}$, $\widetilde{b}>0$; we conjugate the first pair with 
$\frac{1}{2}
\begin{bsmallmatrix}
2 & -2\\
1 & 1
\end{bsmallmatrix}
$:
\begin{equation}\label{pathb1b}
\big(\begin{bsmallmatrix}
0 & 1\\
1 & 0
\end{bsmallmatrix},
\begin{bsmallmatrix}
0 & \widetilde{b}\\
\widetilde{b} & 1
\end{bsmallmatrix}\big)
\approx
\big(1\oplus -1,
\tfrac{1}{4}\begin{bsmallmatrix}
4\widetilde{b}+1 & 1\\
1 & -4\widetilde{b}+1
\end{bsmallmatrix}\big)
\dashrightarrow
\big(\begin{bsmallmatrix}
0 & 1\\
1 & i
\end{bsmallmatrix},
a\oplus d\big), \quad d\in \mathbb{C}, a>0.
\end{equation}

Using ideas from \ref{p1-1v0110} \ref{p1-1v0110c} we 
find
$c(s)=-1$, $P(s)=\frac{e^{i\frac{\pi}{4}}}{\sqrt{2}}
\begin{bsmallmatrix}
\frac{i}{s^{2}}e^{i\alpha(s)} & \frac{i}{s^{2}}e^{-i\alpha(s)}\\
se^{-i\alpha(s)} & se^{i\alpha(s)}
\end{bsmallmatrix}$ with $\sin (2 \alpha(s))=s$, $B(s)=\widetilde{b}s^{3}\oplus (\frac{\widetilde{b}}{s^{3}}-\frac{i}{2s^{2}})$ (see (\ref{cPepsi})), which proves 
the existence of (\ref{pathb1b}).

\item \label{ha1}
$
B=\begin{bsmallmatrix}
0 & b\\
b & 0
\end{bsmallmatrix}$, $b>0$


If
$
\widetilde{B}=\begin{bsmallmatrix}
0 & \widetilde{b}\\
\widetilde{b} & 1
\end{bsmallmatrix}
$, $\widetilde{b}>0$, 
the proof in \cite[Theorem 3.6, Case VI. (b) (i)]{ST} applies mutatis mutandis, we only use 
$b^{2}=\widetilde{b}^{2}+\epsilon'$, 
$|\epsilon'|\leq \max\{\epsilon,\frac{\delta^{2}}{\nu^{2}}\}(4\max\{\widetilde{b},\widetilde{b}^{2},1\}+2)^{2}$ (see Lemma \ref{lemadet} (\ref{pathlema2ii}))
instead of $1=\frac{|\det \widetilde{A}|}{|\det A|}=\frac{|\det \widetilde{B}|}{|\det B|}=\frac{\widetilde{b}^{2}}{b^{2}}$. 
For $\widetilde{B}=1\oplus \widetilde{d}$, $\widetilde{d}\neq 0$ we apply \cite[Theorem 3.6, Case VI. (b) (ii)]{ST}, we only replace $\frac{|\det \widetilde{A}|}{|\det A|}=\frac{|\det \widetilde{B}|}{|\det B|}=|\frac{\widetilde{d}}{b^{2}}|=1$ with $b^{2}=|\widetilde{d}|+\epsilon'$, 
$|\epsilon'|\leq \max\{\epsilon,\frac{\delta^{2}}{\nu^{2}}\}(4\max\{|\widetilde{d}|,1\}+2)^{2}$ (Lemma \ref{lemadet} (\ref{pathlema2ii})).

\end{enumerate}

\item \label{p11-101i}
$
(1\oplus -1,\widetilde{B})
\dashrightarrow
\big(\begin{bsmallmatrix}
0 & 1\\
1 & i
\end{bsmallmatrix},
B\big)$ 

Lemma \ref{lemapsi1} (\ref{lemapsi11}) with (C\ref{r5}) for $-\omega=\alpha=1$, $\beta=0$ yields ($|\delta_1|,|\delta_2|,|\delta_4|<\delta$, $k\in \mathbb{Z}$):
\vspace{-1mm}
\begin{equation}\label{ocena4adad}
2\Rea(\overline{x}u)=(-1)^{k}+ \delta_1, \quad 2\Rea(\overline{y}v)=-(-1)^k+\delta_2, \,\, |u|^{2},|v|^2\leq \delta,\quad 
\overline{x}v+\overline{u}y= \delta_4.
\end{equation}

\vspace{-1mm}

\begin{enumerate}[label=(\alph*),wide=0pt,itemindent=2em,itemsep=6pt]


\item \label{p11-101ib}
$
B=\begin{bsmallmatrix}
0 & b\\
b & 0
\end{bsmallmatrix}
$, $b>0$

The proof in \cite[Theorem 3.6, Case V. (b) (i)]{ST} applies mutatis mutandis for $\widetilde{B}=\begin{bsmallmatrix}
0 & \widetilde{b}\\
\widetilde{b} & 0
\end{bsmallmatrix}$, $\widetilde{b}> 0$; recall $b^{2}=\widetilde{b}^{2}+\epsilon'$, $|\epsilon'|\leq \max\{\epsilon,\frac{\delta^{2}}{\nu^{2}}\}(2\max\{1,\widetilde{b},\widetilde{b}^{2}\}+1)^{2}$ (Lemma \ref{lemadet} (\ref{pathlema2ii})). 

%
%

\quad
Let $\widetilde{B}=\widetilde{a}\oplus \widetilde{d}$, $\widetilde{d}\geq\widetilde{a}\geq 0$.
If $\widetilde{d}>\widetilde{a}>0$ 
the proof in \cite[Theorem 3.6, Case V. (b) (ii)]{ST} applies for 
$b^{2}=\widetilde{a}\widetilde{d}+\epsilon'$, $|\epsilon'|\leq  \max\{\epsilon,\frac{\delta^{2}}{\nu^{2}}\}(4\max\{1,\widetilde{d},\widetilde{a}\widetilde{d}\}+2)^{2}$ (Lemma \ref{lemadet} (\ref{pathlema2ii})).
For $c(s)=-1$, $P(s)=\frac{1}{\sqrt{2}}\begin{bsmallmatrix}
i s^{-1} & s^{-1}\\
-i s & s
\end{bsmallmatrix}$, $B(s)=(\widetilde{d}+s)\begin{bsmallmatrix}
0 & 1\\
1 & 0
\end{bsmallmatrix}$ in (\ref{cPepsi}) we get 
$(1\oplus -1,\widetilde{d}I_2)\to\big(\begin{bsmallmatrix}
0 & 1\\
1 & i
\end{bsmallmatrix},\begin{bsmallmatrix}
0 & b\\
b & 0
\end{bsmallmatrix}\big)$, $\widetilde{d}\geq 0$.
If $\widetilde{a}=0$, $\widetilde{d}> 0$ then Lemma \ref{lemapsi2} (D\ref{p3bd})
yields 
$\widetilde{d}-\epsilon\leq |\frac{v}{u}|\sqrt{\epsilon}(4\widetilde{d}+2)^{\frac{1}{2}}$, 
and Lemma \ref{lemadet} (\ref{lemadeta}) gives $|\det P|\leq 1 + \frac{6\delta^{2}}{\nu^{2}}$.
By applying this and (\ref{ocena4adad}) to (\ref{xvuyre2}) implies 
$(1-\delta)(\widetilde{d}-\epsilon)\leq \sqrt{\epsilon}(4\widetilde{d}+2)^{\frac{1}{2}}(\delta+1+\frac{6\delta^{2}}{\nu^{2}})$, which fails for small $\epsilon,\delta$. 
%
%
%

\item
$B=a\oplus d$, \quad $a\geq 0$, $d\in \mathbb{C}$


If $\widetilde{b}=0$, $0\leq \widetilde{a}<\widetilde{d}$ the same proof as in \ref{p1-1v0110} \ref{p1-1v0110c} applies (see (\ref{eqBF8}) and (\ref{ocena4adad})).
%
%


\end{enumerate}

\item \label{p0110v1-1}
$
\big(\begin{bsmallmatrix}
0 & 1\\
1 & 0
\end{bsmallmatrix},
\widetilde{B}\big)\dashrightarrow
(1\oplus -1,B)$

Lemma \ref{lemapsi1} (\ref{lemapsi11}) with (C\ref{r33}) for $\omega=0$, $\theta=\pi$ gives 
\begin{equation}\label{est12}
 |x|^2-|u|^2= \delta_1,\;\; \overline{x}y-\overline{u}v-(-1)^{k}= \delta_2,\;\; |y|^2-|v|^2= \delta_4,\;\; |\delta_1|,|\delta_2|,|\delta_4|\leq \delta,\,k\in\mathbb{Z}.
\end{equation}

\begin{enumerate}[label=(\alph*),wide=0pt,itemindent=2em,itemsep=6pt]

\item \label{p0110v1-1a} 
$B=a\oplus d$, \quad$0\leq a\leq d$, $d>0$


\begin{enumerate}[label=(\roman*),wide=0pt,itemindent=4em,itemsep=3pt]

\item
$
\widetilde{B}=\begin{bsmallmatrix}
0 & \widetilde{b}\\
\widetilde{b} & 1
\end{bsmallmatrix}
$, $\widetilde{b}>0$

%
%
First, $c(s)=-1$, $P(s)=\begin{bsmallmatrix}
\frac{-i}{2}s & is^{-1}\\
\frac{s}{2} & s^{-1}
\end{bsmallmatrix}$, $B(s)=\widetilde{b}\oplus (\widetilde{b}+s^2)$ in (\ref{cPepsi}) gives 
$\big(\begin{bsmallmatrix}
0 & 1\\
1 & 0
\end{bsmallmatrix},\begin{bsmallmatrix}
0 & \widetilde{b}\\
\widetilde{b} & 1
\end{bsmallmatrix}\big)\to (1\oplus -1,a\oplus d)$, $a<d$.
%
%
%
%
For $a=d$ we apply the proof of \cite[Theorem 3.6, Case VIII (a) (ii)]{ST}, but replace $\begin{bsmallmatrix}
0 & d\\
d & 1
\end{bsmallmatrix}$ with $\begin{bsmallmatrix}
0 & \widetilde{b}\\
\widetilde{b} & 1
\end{bsmallmatrix}$; 
and use 
$d^{2}=|\widetilde{b}|^{2}+\epsilon'$, 
$|\epsilon'|\leq \max\{\epsilon,\frac{\delta^{2}}{\nu^{2}}\}\big(4\max\{|\widetilde{b}|,|\widetilde{b}|^{2},1\}+2\big)^{2}$ (Lemma \ref{lemadet} (\ref{pathlema2ii}))
at the end of the proof.

\item \label{p0110v1-1ai} 
$\widetilde{B}=1\oplus \widetilde{d}$,\quad $ \widetilde{d}\in \mathbb{C}$

We prove
$\big(\begin{bsmallmatrix}
0 & 1\\
1 & 0
\end{bsmallmatrix},1 \oplus 0 \big)\to (1\oplus -1,0\oplus d)$ with 
$P(s)=\begin{bsmallmatrix}
s^{-1} & \frac{1}{2}s\\
s^{-1} & -\frac{1}{2}s
\end{bsmallmatrix}$, $c(s)=1$, $B(s)=0\oplus s^{2}$.

\quad
Proceed with $B=a\oplus d$, $0<a\leq d$. 
We have equations (\ref{eqBFadad}) for $\widetilde{a}=1$, $\Ima \widetilde{d}>0$, $\widetilde{b}=0$.
By combining them with (\ref{exu2v2}), (\ref{exuv}) for $\sigma=-1$ and with (\ref{est12}) we get 
\begin{align}\label{calu2}
\epsilon_1+1- ae^{2i\phi}\delta_1 = u^{2} (ae^{2i(\phi-\eta)}+d),\nonumber\\ 
a\big((-1)^{k}+ \delta_2\big)-e^{-2i\phi}\epsilon_2=-e^{-2i\phi}\big(uv(ae^{2i(\phi-\eta)}+d)\big),\nonumber\\
a\big((-1)^{k}+ \delta_2\big)-e^{-2i\varphi}\epsilon_2=-e^{-2i\varphi}\big(uv(ae^{2i(\varphi-\kappa)}+d)\big),\\
\epsilon_4+\widetilde{d}-ae^{2i\varphi}\delta_4= v^{2} (ae^{2i(\varphi-\kappa)}+d),\nonumber\\
d\big((-1)^{k}+ \delta_2\big)+e^{-2i\kappa}\epsilon_2=e^{-2i\kappa}\big(xy(de^{2i(\kappa-\varphi)}+a)\big),\nonumber
\end{align}
%

%
%

\vspace{-2mm}
 
We have $ad=|\widetilde{d}|+\delta'$, $|\delta'|\leq \max\{\epsilon,\frac{\delta^{2}}{\nu^{2}}\}\big(4\max\{1,|\widetilde{d}|\}+2\big)^{2}$ (Lemma \ref{lemadet}), hence $a\leq \sqrt{|\widetilde{d}|}+1$,
provided that $\max\{\epsilon,\frac{\delta^{2}}{\nu^{2}}\}\leq \frac{1}{(4\max\{1,|\widetilde{d}|\}+2)^{2}}$. 
Next, we divide the first and the second (the third and the fourth) two equations of (\ref{calu2}) to get
\[
\tfrac{u}{v}=\tfrac{\epsilon_1+1- ae^{2i\phi}\delta_1}{a\big((-1)^{k}+ \delta_2\big)-e^{-2i\phi}\epsilon_2}(-e^{-2i\phi})=\tfrac{a\big((-1)^{k}+ \delta_2\big)-e^{-2i\varphi}\epsilon_2}{\epsilon_4+\widetilde{d}-ae^{2i\varphi}\delta_4}(-e^{2i\varphi}).
\]
The second equality yields that there is a (computable) constant $D>0$ so that
\begin{equation}\label{rad}
a^{2}=\widetilde{d}e^{-2i(\phi+\varphi)}+\delta_5, \quad d^{2}=\tfrac{(|\widetilde{d}|+\delta')^{2}}{\widetilde{d}e^{2i(\phi-\varphi)}+\delta_5},\qquad |\delta_5|\leq D \max \{\epsilon,\delta\},
\end{equation}
%
Furthermore, we divide the third and the fifth equation of (\ref{calu2}) to conclude:
\begin{equation}\label{fxyuv}
\tfrac{xy}{uv}=\tfrac{(d((-1)^{k}+ \delta_2)+e^{-2i\kappa}\epsilon_2)}{(a((-1)^{k}+ \delta_2)e^{2i\varphi}\epsilon_2)}=1+\delta_6, \qquad |\delta_6|\leq C\max\{\epsilon,\delta\},
\end{equation}
while the first four equations of (\ref{calu2}) yield
\[
\tfrac{1}{\widetilde{d}}+\delta_0=\tfrac{\big(1+\epsilon_1- ae^{2i\phi}\delta_1\big)\big(a(-1)^{k}+ a\delta_2-e^{-2i\phi}\epsilon_2\big)}{\big(\widetilde{d}+\epsilon_1- ae^{2i\varphi}\delta_4\big)\big(a(-1)^{k}+ a\delta_2-e^{-2i\varphi}\epsilon_2\big)}=e^{i(2\eta-2\kappa-2\phi+2\varphi)}\tfrac{|u|^{2}}{|v|^{2}},\quad |\delta_0|\leq K \max\{\epsilon,\delta\},
\]
where constants $C,K>0$ can be computed.
By applying (\ref{ocenakoren}) for $\widetilde{d}=|\widetilde{d}|e^{i\widetilde{\vartheta}}$ we get $2\eta-2\kappa-2\phi+2\varphi+\widetilde{\vartheta}=\psi$ with 
$|e^{i\frac{\psi}{2}}-1|=|\sin \frac{\psi}{4}|\leq |\sin \psi|\leq\delta_0$. Using (\ref{fxyuv}) we get
\begin{align*}
\big|\tfrac{\overline{x}y}{\overline{u}v}
&-1\big|
=\big|\tfrac{|xy|}{|uv|}e^{i(\phi-\varphi-\kappa+\eta)}-1\big|
=\big||1+\delta_6|e^{i(-\frac{\widetilde{\vartheta}}{2}+\frac{\psi}{2})}-1\big|\\
&=\big|e^{i\frac{\psi}{2}}(e^{-i\frac{\widetilde{\vartheta}}{2}}+1)-(e^{i\frac{\psi}{2}}-1)+(|1-\delta_6|-1)e^{i(-\frac{\widetilde{\vartheta}}{2}+\frac{\psi}{2})}\big|
\geq |e^{-i\frac{\widetilde{\vartheta}}{2}}+1|-|\delta_0|-|\delta_6| \geq \cos \tfrac{\widetilde{\vartheta}}{4},
\end{align*}
provided that $\epsilon,\delta $ are such that $\tfrac{1}{4}|e^{-i\frac{\widetilde{\vartheta}}{2}}+1|=\frac{1}{2}\cos \frac{\widetilde{\vartheta}}{4}\geq|\delta_0|,|\delta_6|$ with $0<\widetilde{\vartheta}<\pi$. Thus: 
%
\begin{align*}
&2\geq 1+\delta\geq |\overline{x}y-\overline{u}v|=
|\overline{u}v|\big|\tfrac{\overline{x}y}{\overline{u}v}-1\big|
\geq \tfrac{1}{2}|uv|\cos \tfrac{\widetilde{\vartheta}}{4},\qquad
|u|^{2}=\tfrac{|u|}{|v|}|uv|\leq 4\tfrac{|\widetilde{d}|^{-1}+|\delta_0|}{\cos \frac{\widetilde{\vartheta}}{4}}.
\end{align*}
%

We simplify the first and the third equation of (\ref{calu2}) and rearrange the terms: 
%
%
\begin{align}\label{calu22}
&2au^{2}\cos (\phi-\eta)e^{i(\phi-\eta)}=1+\epsilon_1-ae^{2i\phi}\delta_1-(d-a)u^{2}, \\ 
&-2auv\cos (\varphi-\kappa)e^{-i(\varphi+\kappa)}=a(-1)^{k}+a\delta_2-e^{-2i\varphi}\epsilon_2+(d-a)uve^{-2i\varphi} \nonumber.
\end{align}
By applying (\ref{ocenah}) 
with $\widetilde{d}=|\widetilde{d}|e^{i\widetilde{\vartheta}}$ 
to (\ref{rad}) and (\ref{calu22}) we deduce ($L>0$ is a constant):
\begin{align*}
&\psi_0=\widetilde{\theta}-2(\phi+\varphi), \qquad |\sin \psi|\leq \tfrac{2|\delta_5|}{|\widetilde{d}|}, \\
&\psi_1=(\phi+\eta)-\pi l_1,\qquad  |\sin \psi_1|\leq L\max\{\epsilon,\delta\},\quad l_1\in \mathbb{Z}, \\
&\psi_2=(\eta-\varphi)-\pi(k+l_2),\qquad  |\sin \psi_2|\leq L\max\{\epsilon,\delta\},\quad l_2\in \mathbb{Z}.
\end{align*}
Thus 
$\big|\sin(\psi_0+2\psi_1-2\psi_2)\big|=|\sin \widetilde{\vartheta}|\leq \tfrac{2|\delta_5|}{|\widetilde{d}|} +4L \max\{\epsilon,\delta\}$ and it fails for small $\epsilon,\delta$.

\end{enumerate}

\item
$
B=\begin{bsmallmatrix}
0 & b\\
b & 0
\end{bsmallmatrix}$, $b>0$


If $
\widetilde{B}=\begin{bsmallmatrix}
0 & \widetilde{b}\\
\widetilde{b} & 1
\end{bsmallmatrix}
$ for $\widetilde{b}>0$ 
we can apply the proof of \cite[Theorem 3.6, Case VIII (b) (i)]{ST}, recall
$b^{2}=\widetilde{b}^{2}+\epsilon'$, 
$|\epsilon'|\leq \max\{\epsilon,\frac{\delta^{2}}{\nu^{2}}\}(4\max\{|\widetilde{b}|,|\widetilde{b}|^{2},1\}+2)^{2}$ (Lemma \ref{lemadet} (\ref{pathlema2ii})).


\quad
Let $\widetilde{B}=1\oplus \widetilde{d}$, $\widetilde{d}\in \mathbb{C}$. 
To get
$\big(\begin{bsmallmatrix}
0 & 1\\
1 & 0
\end{bsmallmatrix},1 \oplus 0 \big)\to \big(1\oplus -1,\begin{bsmallmatrix}
0 & b\\
b & 0
\end{bsmallmatrix}\big)$, we take 
$P(s)=\frac{1}{2}\begin{bsmallmatrix}
2s^{-1} & s\\
2s^{-1} & -s
\end{bsmallmatrix}$, $c(s)=1$, 
$B(s)=\frac{s^{2}}{2}\begin{bsmallmatrix}
0 & 1\\
1 & 0
\end{bsmallmatrix}$ in (\ref{cPepsi}). 
If $\widetilde{d}=|\widetilde{d}|e^{i\widetilde{\vartheta}}$, $0<\widetilde{\vartheta}<\pi$ Lemma \ref{lemapsi2} (D\ref{p4}) implies 
%
\begin{align*}
bvx=\tfrac{1}{2}\big(\epsilon_2'+(-1)^{l}i\sqrt{|\widetilde{d}|}e^{i\tfrac{\widetilde{\vartheta}}{2}}\big),\qquad 
buy=\tfrac{1}{2}\big(\epsilon_2''-(-1)^{l}i\sqrt{|\widetilde{d}|}e^{i\tfrac{\widetilde{\vartheta}}{2}}\big),
\end{align*}
where $
\small{|\epsilon_2'|,|\epsilon_2''|
\leq 
\tfrac{\epsilon(4\max\{1,|\widetilde{d}|\}+2+|\widetilde{d}|)}{|\widetilde{d}|}}.$
By applying (\ref{ocenah}) to these two equations and to the first equality of (\ref{eqadb}) we get $\psi_1,\psi_2,\psi_3\in (-\frac{\pi}{2},\frac{\pi}{2})$ such that:
\begin{align*}
&\psi_1=\phi+\kappa-\tfrac{\pi}{2}-\tfrac{\widetilde{\vartheta}}{2}-l\pi+2\pi l_3, \quad 
|\sin \psi_1|\leq \tfrac{2|\epsilon_2'|}{\sqrt{|\widetilde{d}|}}, \\
&\psi_2=\varphi+\eta-\tfrac{\pi}{2}-\tfrac{\widetilde{\vartheta}}{2}-(l+1)\pi+2\pi l_4, \quad |\sin \psi_2|\leq \tfrac{2|\epsilon_2''|}{\sqrt{|\widetilde{d}|}},\\
&\psi_3=\phi+\eta+2\pi l_1, \quad |\sin \psi_3|\leq \epsilon.  
\end{align*}
Therefore
\begin{align*}
(-1)^{k}+\delta_2&=\overline{x}y-\overline{u}v=|xy|e^{i(\varphi-\phi)}-|uv|e^{i(\kappa-\eta)}=e^{-i(\phi+\eta)}\big(|xy|e^{i(\varphi+\eta)}-|uv|e^{i(\kappa+\phi)}\big)\\
                 &=e^{i(-\psi_3+2\pi l_1)}\big(|xy|e^{i(\psi_2-2\pi l_4+\frac{\pi}{2}+\frac{\widetilde{\vartheta}}{2}+(l+1)\pi)}-|uv|e^{i(\frac{\pi}{2}+\frac{\widetilde{\vartheta}}{2}+l\pi-2\pi l_3+\psi_1)}\big)=\\
                 &=e^{i(\psi_2-\psi_3+\frac{\widetilde{\vartheta}}{2}+(l+1)\pi+\frac{\pi}{2})}\big(|xy|+|uv|e^{i(\psi_1-\psi_2)}\big).
\end{align*}
Since $\psi_1,\psi_2,\psi_3\in (-\frac{\pi}{2},\frac{\pi}{2})$ are close to $0$, the argument of the second factor is close to $0$, too.
Using (\ref{ocenah}) again we obtain a contradiction for $\epsilon,\delta$ small enough:
\begin{align*}
&\psi= k\pi-\big(\psi_2-\psi_3+\tfrac{\widetilde{\vartheta}}{2}+(l+1)\pi+\tfrac{\pi}{2}\big)-(\psi_1-\psi_2),\qquad |\sin \psi|\leq 2\delta,\\ 
&0\neq |\cos \tfrac{\widetilde{\vartheta}}{2}|=\big|\sin (\tfrac{\widetilde{\vartheta}}{2}+\tfrac{\pi}{2})\big|\leq \big|\sin (\psi_3+\psi_1)\big|\leq 2\epsilon_2'+2\delta.
\end{align*}
%


\end{enumerate}

\item \label{p01t001t01}
$
\bigl(\begin{bsmallmatrix}
0 & 1\\
1 & 0 
\end{bsmallmatrix}, \widetilde{B}
\bigr)\dashrightarrow
\bigl(\begin{bsmallmatrix}
0 & 1\\
1 & 0
\end{bsmallmatrix},B\bigr)$ 

%
%

\begin{enumerate}[label=(\alph*),wide=0pt,itemindent=2em,itemsep=6pt]


\item 
$
B=1\oplus d
$, $\Ima d> 0$, \quad
$\widetilde{B}=\begin{bsmallmatrix}
0 & \widetilde{b}\\
\widetilde{b} & 1
\end{bsmallmatrix}
$, $\widetilde{b}>0$

%
We can apply the proof of \cite[Theorem 3.6, Case IX (b)]{ST}, 
and use 
$|d|=|\widetilde{b}|^{2}+\epsilon'$, 
$|\epsilon'|\leq \max\{\epsilon,\frac{\delta^{2}}{\nu^{2}}\}(4\max\{|\widetilde{b}|,|\widetilde{b}|^{2},1\}+2)^{2}$ (Lemma \ref{lemadet} (\ref{pathlema2ii})).

\item \label{p01t001t01b}
$B=\begin{bsmallmatrix}
0 & b\\
b & 1
\end{bsmallmatrix}
$, $b>0$


For $P(s)=\begin{bsmallmatrix}
0 & 1\\
1 & 0
\end{bsmallmatrix}$, $B=\begin{bsmallmatrix}
0 & s\\
s & 1
\end{bsmallmatrix}$, $c(s)=1$ in (\ref{cPepsi}) we get  
$\big(\begin{bsmallmatrix}
0 & 1\\
1 & 0
\end{bsmallmatrix},1\oplus 0\big)\to \big(\begin{bsmallmatrix}
0 & 1\\
1 & 0
\end{bsmallmatrix},\begin{bsmallmatrix}
0 & b\\
b & 1
\end{bsmallmatrix}\big)$, $b>0$.
For $\widetilde{B}=1\oplus \widetilde{d}$, $\widetilde{d}\neq 0$ we use the proof of \cite[Theorem 3.6, Case IX (c)]{ST}, 
but replace $\frac{|\det \widetilde{B}|}{|\det B|}=\frac{\widetilde{a}\widetilde{d}}{b^{2}}$ with 
$b^{2}=|\widetilde{d}|+\epsilon'$, 
$|\epsilon'|\leq \max\{\epsilon,\frac{\delta^{2}}{\nu^{2}}\}(4\max\{|\widetilde{b}|,|\widetilde{b}|^{2},1\}+2)^{2}$ (Lemma \ref{lemadet} (\ref{pathlema2ii})).

\end{enumerate}

\item 
$ \big(\begin{bsmallmatrix}
0 & 1\\
1 & \omega 
\end{bsmallmatrix},
\widetilde{B}\big)\dashrightarrow 
(1\oplus e^{i\theta},
B)
$, \qquad $0< \theta <\pi$, \quad $\omega \in \{0,i\}$

From Lemma \ref{lemapsi1} (C\ref{r33}) we get
\begin{align}\label{inocena1psi}
&\big||u|^2-|x|^2\big| \leq \delta, \quad \big||v|^2-|y|^2\big| \leq \delta, \quad \big|\overline{x}y-\overline{u}v-(-1)^{k}\big|\leq \delta,\quad k\in \mathbb{Z},\,\,\sin\theta\leq \delta; \\
&\textrm{if}\quad \omega=i,\quad \textrm{then} \quad  (\sin \theta) |v|^2=1+\delta_2,\,\, (\sin \theta)|u|^{2}=\delta_3, \,\, 
|\delta_2|,|\delta_3|\leq \delta.\nonumber
\end{align}
%
For $\omega=i$ we further deduce 
\begin{equation}\label{xyuvth}
\quad \big|(\sin\theta)|y|^{2}- 1\big| \leq \delta+\delta^{2}, \quad  (\sin\theta)|x|^{2}\leq  \delta+\delta^{2}.
\end{equation}

\begin{enumerate}[label=(\alph*),wide=0pt,itemindent=2em,itemsep=6pt]
\item $B=\begin{bsmallmatrix}
0 & b\\
b & d 
\end{bsmallmatrix}$, $b\geq 0$, $d> 0$

Lemma \ref{lemapsi2} (D\ref{p3bd}) for $\widetilde{B}=\widetilde{a}\oplus \widetilde{d}$, $\widetilde{a}\neq 0$
and (\ref{inocena1psi}) for $\omega=i$ (hence $(1+\delta_2)|u|^{2}=\delta_3|v|^{2}$) yield a contradiction for small $\epsilon,\delta$.
Next, $c(s)=1$, $P(s)=i\sqrt{\widetilde{d}+s}\begin{bsmallmatrix}
\frac{s}{\widetilde{d}+s} & s^{-1}\\
0 & -s^{-1} 
\end{bsmallmatrix}$,
\small
$\cos (\frac{\theta}{2})=\left\{
\begin{array}{ll}
\frac{s^{2}}{2(\widetilde{d}+s)}, & \omega=i\\
s^{3}, & \omega=0
\end{array}\right.$
\normalsize, 
$B(s)=\begin{bsmallmatrix}
0 & \widetilde{b} \\
\widetilde{b}  & 2\widetilde{b}-s^2
\end{bsmallmatrix}$
in (\ref{cPepsi}) proves 
$
 \big(\begin{bsmallmatrix}
0 & 1\\
1 & \omega
\end{bsmallmatrix},
\begin{bsmallmatrix}
0 & \widetilde{b}\\
\widetilde{b} & \widetilde{d}
\end{bsmallmatrix}\big)\to 
\big(1\oplus e^{i\theta},
\begin{bsmallmatrix}
0 & b\\
b & d
\end{bsmallmatrix}\big)
$, $\widetilde{b}> 0$, either $\omega=0$, $\widetilde{d}=1$ or $\omega=i$, $\widetilde{d}=0 $. Taking
$c(s)=1$, $P(s)=\sqrt{\widetilde{d}+s}\begin{bsmallmatrix}
\frac{s}{\widetilde{d}+s} & s^{-1}\\
0 & -s^{-1} 
\end{bsmallmatrix}$,
$\cos (\frac{\theta}{2})=
\frac{s^{2}}{2(\widetilde{d}+s)}$, 
$B(s)=0\oplus s^2$ 
shows 
$
 \big(\begin{bsmallmatrix}
0 & 1\\
1 & i
\end{bsmallmatrix},
0\oplus \widetilde{d}
\big)\to 
(1\oplus e^{i\theta},
0\oplus d)$. 
%
%
%
%
Finally, $c(s)=-ie^{i\frac{\widetilde{\vartheta}}{2}}$, 
$\cos(\frac{\theta(s)}{2})=s^{3}$, $P(s)=\frac{1}{s}e^{-i\frac{\pi}{4}}\begin{bsmallmatrix}
e^{i\alpha(s)} & ie^{-i\alpha(s)}\sqrt{\widetilde{d}+s}\\
-e^{-i\alpha(s)} & -ie^{i\alpha(s)}\sqrt{\widetilde{d}+s} 
\end{bsmallmatrix}$, $\sin (2\alpha(s))=\frac{s^{2}}{2|\sqrt{\widetilde{d}+s}|}$, 
$B(s)=\big|\sqrt{\widetilde{d}+s}\big|\begin{bsmallmatrix}
0 & 1 \\
1  & 2
\end{bsmallmatrix}$ in (\ref{cPepsi}) proves 
$
\big(\begin{bsmallmatrix}
0 & 1\\
1 & 0
\end{bsmallmatrix},
1\oplus \widetilde{d}\big)\to 
\big(1\oplus e^{i\theta},
\begin{bsmallmatrix}
0 & b\\
b & d
\end{bsmallmatrix}\big)
$, $b > 0$, $\Ima (\widetilde{d})> 0$.


\item $B=\begin{bsmallmatrix}
0 & b\\
b & 0 
\end{bsmallmatrix}$, $b>0$

Let $\widetilde{B}=\begin{bsmallmatrix}
0 & \widetilde{b}\\
\widetilde{b} & 0 
\end{bsmallmatrix}$, $\widetilde{b}> 0$ and $\omega=i$. 
It follows from Lemma \ref{lemadet} (\ref{pathlema2ii}) that $b^{2}=\widetilde{b}^{2}+\epsilon'$, $|\epsilon'|\leq \max \{\epsilon,\frac{\delta^{2}}{\nu^{2}}\}\big(4\max\{1,|\widetilde{b}|,|\widetilde{b}|^{2}\}+2\big)^{2}$, so the third equation of (\ref{eqadb}) for $\widetilde{d}=0$ yields $(yv)^{2}=\frac{\epsilon_4^{2}}{4(\widetilde{b}^{2}+\epsilon')}$.
By combining it with (\ref{inocena1psi}) and (\ref{xyuvth}) we deduce 
\[
\big(1- \delta(1+\delta)\big)(1-\delta)\leq (\sin\theta)^{2}|yv|^{2}=\tfrac{\delta^{2}\epsilon^{2}}{4|\widetilde{b}^{2}+\epsilon'|},
\]
which fails for $\epsilon,\delta$ small enough.
Next, $c(s)=1$,
$\cos (\frac{\theta(s)}{2})=s^{2}$, 
$P(s)=\frac{i}{\sqrt{2}}
\begin{bsmallmatrix}
s & s^{-1}\\
s & -s^{-1}
\end{bsmallmatrix}$, 
$B(s)=(\widetilde{d}+s)s^{2}\begin{bsmallmatrix}
0 & 1 \\
1  & 0
\end{bsmallmatrix}$ in (\ref{cPepsi}) gives
$
\big(\begin{bsmallmatrix}
0 & 1\\
1 & i
\end{bsmallmatrix},
0\oplus \widetilde{d}\big)\to
\big(1\oplus e^{i\theta},
\begin{bsmallmatrix}
0 & b\\
b & 0
\end{bsmallmatrix}\big)$, $\widetilde{d}\geq 0$.
%

\quad
We apply the same proof as in \ref{p0110v1-1} (compare (\ref{est12}) and (\ref{inocena1psi})) to show
$
\big(\begin{bsmallmatrix}
0 & 1\\
1 & 0
\end{bsmallmatrix},
\begin{bsmallmatrix}
0 & \widetilde{b}\\
\widetilde{b} & 1
\end{bsmallmatrix}\big)\not\to 
\big(1\oplus e^{i\theta},
\begin{bsmallmatrix}
0 & b\\
b & 0
\end{bsmallmatrix}\big)
$, $\widetilde{b}>0$ and
$
\big(\begin{bsmallmatrix}
0 & 1\\
1 & 0
\end{bsmallmatrix},
1\oplus \widetilde{d}\big)\not\to 
\big(1\oplus e^{i\theta},
\begin{bsmallmatrix}
0 & b\\
b & 0
\end{bsmallmatrix}\big)
$, $\Ima \widetilde{d}> 0$.

\item $\begin{bsmallmatrix}
a & b\\
b & 0 
\end{bsmallmatrix}$, $a>0$, $b\geq 0$


We multiply the squared equation in Lemma \ref{lemapsi2} (D\ref{p3ba}) for $\widetilde{B}=\widetilde{a}\oplus \widetilde{d}$ with $(\sin \theta)^{2}$: 
\vspace{-1mm}
\[
(\widetilde{a}+\epsilon_1)^{2}y^{2}\sin^{2} \theta=\big(-i(-1)^{l}\sqrt{\widetilde{a}\widetilde{d}}+\epsilon_2'\big)x^{2} \sin^{2} \theta, \,\,
|\epsilon_2'|\leq \small{\left\{\begin{array}{ll}
\tfrac{\epsilon(4\max\{\widetilde{a},\widetilde{d}\}+2+|\widetilde{a}\widetilde{d}|)}{|\widetilde{a}\widetilde{d}|}, & \widetilde{a}\widetilde{d}\neq 0\\
\sqrt{\epsilon(4\max\{\widetilde{a},\widetilde{d}\}+3)}, & \widetilde{a}\widetilde{d}= 0
\end{array}\right.}.
\]
\vspace{-1mm}
By applying (\ref{inocena1psi}) and (\ref{xyuvth}) (for $\omega=i$) we get
$|\widetilde{a}+\epsilon_1|^{2}(1-\delta)\leq\big(\sqrt{|\widetilde{a}\widetilde{d}|}+|\epsilon_2'|\big)(\delta+\delta^{2})$, which fails for $\widetilde{a}\neq 0$ and small $\epsilon,\delta$.
For $c(s)=e^{i\frac{\widetilde{\vartheta}}{2}}$, $P(s)=\frac{1}{s}e^{-i\frac{\pi}{4}}\begin{bsmallmatrix}
-e^{-i\alpha} & -ie^{i\alpha}\sqrt{\widetilde{d}+s} \\
e^{i\alpha} & ie^{-i\alpha}\sqrt{\widetilde{d}+s}
\end{bsmallmatrix}$ with $\sin (2\alpha(s))=\frac{s^{2}}{2|\sqrt{\widetilde{d}+s}|}$, $\cos\frac{\theta}{2}=s^{3}$, 
$B(s)=|\sqrt{\widetilde{d}+s}|\begin{bsmallmatrix}
2 & 1 \\
1  & 0
\end{bsmallmatrix}$ in (\ref{cPepsi}), it follows  
$
\big(\begin{bsmallmatrix}
0 & 1\\
1 & 0
\end{bsmallmatrix},
1\oplus \widetilde{d}\big)\to 
\big(1\oplus e^{i\theta},
\begin{bsmallmatrix}
a & b\\
b & 0
\end{bsmallmatrix}\big)
$, $\pi> \widetilde{\vartheta}=\arg \widetilde{d}> 0$ or $\widetilde{d}=0$.
Taking $c(s)=1$, $P(s)=\frac{1}{\sqrt{\widetilde{d}+s}}\begin{bsmallmatrix}
0 & \frac{1}{s}(\widetilde{d}+s) \\
s & -\frac{1}{s}(\widetilde{d}+s)
\end{bsmallmatrix}$, 
$B(s)=\begin{bsmallmatrix}
2b(s)+s^2 & b(s) \\
b(s)  & 0
\end{bsmallmatrix}$, $b(s)\to \widetilde{b}$,
$\cos (\frac{\theta}{2})=\left\{
\begin{array}{ll}
\frac{s^{2}}{2(\widetilde{d}+s)}, & \omega=i\\
s^{3}, & \omega=0
\end{array}\right.$ proves 
$
\big(\begin{bsmallmatrix}
0 & 1\\
1 & \omega
\end{bsmallmatrix},
\begin{bsmallmatrix}
0 & \widetilde{b}\\
\widetilde{b} & \widetilde{d}
\end{bsmallmatrix}\big)\to 
\big(1\oplus e^{i\theta},
\begin{bsmallmatrix}
a & b\\
b & 0
\end{bsmallmatrix}\big)
$, $b\geq\widetilde{b}\geq 0$, either $\omega=0$, $\widetilde{b}>0$, $\widetilde{d}=1$ or $\omega=i$, $\widetilde{d},\widetilde{b}\geq 0 $.

\item $B=\begin{bsmallmatrix}
a & b\\
b & d 
\end{bsmallmatrix}$, $a,d>0$, $b\in \mathbb{C}$

First let $b=0$. We deal with the case $\omega=0$ in the same manner as in \ref{p0110v1-1} \ref{p0110v1-1a} \ref{p0110v1-1ai} (compare also (\ref{est12}) and (\ref{inocena1psi}); observe that the proof works in the case $a>d$, too). If $\omega =i$ we have $|v|^{2}\geq 1$, $\tfrac{|u|^{2}}{|v|^{2}}\leq \delta\leq \frac{1}{2}$ and using (\ref{inocena1psi}) we easily verify
\[
\tfrac{|x|^{2}}{|y|^{2}}=\tfrac{|u|^{2}+\delta}{|v|^{2}-\delta}=\tfrac{|u|^{2}}{|v|^{2}}+\tfrac{|v|^{2}\delta+|u|^{2}\delta}{(|v|^{2}-\delta)|v|^{2}}\leq 
\delta+\tfrac{|v|^{2}\delta+\frac{1}{2}|v|^{2}\delta}{\frac{1}{2}|v|^{2}}\leq 
4\delta.
\]
Multiplying the second equation of (\ref{eqBFadad}) with $\delta_5=\frac{x}{y}$ and $\delta_6=\frac{u}{v}$ yields
\[
ax^{2}+dv^{2}\delta_6\delta_5=(\widetilde{b}+\epsilon_2)\delta_5,\qquad ay^{2}\delta_6\delta_5+du^{2}=(\widetilde{b}+\epsilon_2)\delta_6.
\]
By adding them and using (\ref{eqBFadad}) yields a contradiction for $\widetilde{a}\neq 0$ and small $\epsilon,\delta$:
\[
(\widetilde{a}+\epsilon_2)+(\widetilde{d}+\epsilon_2)\delta_6\delta_5=(\widetilde{b}+\epsilon_2)(\delta_5+\delta_6).
\]

\quad
It is tedious to find $c(s)=1$,
$\cos (\frac{\theta(s)}{2})=s^{2}$,   
$B(s)=\frac{1}{2}\begin{bsmallmatrix}
\widetilde{a}s^{-2} & \widetilde{a}s^{-2}-2ds^2 \\
\widetilde{a}s^{-2}-2ds^2  & \widetilde{a}s^{-2}
\end{bsmallmatrix}$,
$P(s)=\frac{1}{\sqrt{2}}
\begin{bsmallmatrix}
s & s^{-1}\\
s & -s^{-1}
\end{bsmallmatrix}$
in (\ref{cPepsi}) to prove 
$
\big(\begin{bsmallmatrix}
0 & 1\\
1 & i
\end{bsmallmatrix},
\widetilde{a}\oplus \widetilde{d}\big)\to 
\big(1\oplus e^{i\theta},
\begin{bsmallmatrix}
a & b\\
b & d
\end{bsmallmatrix}\big)
$, $a,d,\widetilde{a}> 0$, $b\in \mathbb{C}^{*}$, $\widetilde{d}\in \mathbb{C}$.

\end{enumerate}

\item \label{011og01t0}
$ \big(\begin{bsmallmatrix}
0 & 1\\
1 & \omega 
\end{bsmallmatrix},
\widetilde{B}\big)\dashrightarrow 
\big(\begin{bsmallmatrix}
0 & 1\\
\tau & 0 
\end{bsmallmatrix},
\begin{bsmallmatrix}
a & b\\
b & d 
\end{bsmallmatrix}\big)
$, \quad $0< \tau <1$, $\omega\in \{0,i\}$

The following expressions are bounded by $\delta$ (Lemma \ref{lemapsi1} (\ref{lemapsi11}) (C\ref{r44}) for $\alpha=1$):
\begin{equation}\label{boundd}
\Rea(\overline{x}u), (1-\tau)\Ima(\overline{x}u),\Rea(\overline{y}v),(1-\tau)\Ima(\overline{y}v)-(-1)^{k}|\omega|, 1-\tau, \overline{x}v+\overline{u}y-(-1)^{k},
\end{equation}
where $k\in \mathbb{Z}$.
If in addition $\omega=i$, it then follows that
\begin{align}\label{ocena11xuyv}
&\delta_5=\tfrac{|xu|}{|yv|}=\tfrac{(1-\tau)|xu|}{(1-\tau)|yv|}  \leq\tfrac{\big|(1-\tau)\Rea(\overline{x}u)\big|+\big|(1-\tau)\Ima(\overline{x}u)\big|}{\big|(1-\tau)\Ima(\overline{y}v)\big|-\big|(1-\tau)\Rea(\overline{y}v)\big|}\leq \tfrac{\delta+\delta^{2}}{1-\delta-\delta^{2}},\\
&\delta|yv|\geq \big|(1-\tau)\Ima(\overline{y}v)\big|\geq 1-\delta,\nonumber\\
%
%
\label{ocenayv}
&(1+\delta)\tfrac{|v|}{|u|}\geq |\overline{u}y+\overline{x}v|\tfrac{|v|}{|u|}\geq |vy|-\tfrac{|xv|}{|uy|}|vy|= |vy|\big(1-\tfrac{|xu|}{|vy|}\,|\tfrac{v}{u}|^{2}\big),\\
&(1+\delta)\tfrac{|y|}{|x|}\geq |\overline{u}y+\overline{x}v|\tfrac{|y|}{|x|}\geq |vy|-\tfrac{|uy|}{|xv|}|vy|= |vy|\big(1-\tfrac{|xu|}{|vy|}\,|\tfrac{y}{x}|^{2}\big)\nonumber.
\end{align}
%

\begin{enumerate}[label=(\alph*),wide=0pt,itemindent=2em,itemsep=6pt]

\item \label{011og01t0a}
$B=a\oplus d$

Let $B=0\oplus 1$.
If $\widetilde{a}\neq 0$ (hence $\widetilde{b}=\widetilde{d}=0$, $\omega=i$) then
(\ref{eqBF1}) for $a=b=\widetilde{d}=0$, $d=1$ yields 
$(\frac{v}{u})^{2}=\frac{\epsilon_2}{\widetilde{a}+\epsilon_1}$, thus  
(\ref{ocena11xuyv}), (\ref{ocenayv}) give a contradiction for small $\epsilon,\delta $. 
Taking $c(s)=1$, $\tau(s)=1-s$, 
$P(s)=\frac{1}{\sqrt{\widetilde{d}+s}}\begin{bsmallmatrix}
1 & -i s^{-1} \\
0 & \widetilde{d}+s
\end{bsmallmatrix}$ 
proves 
$
\big(\begin{bsmallmatrix}
0 & 1\\
1 & i
\end{bsmallmatrix},
0\oplus \widetilde{d}\big)\to 
\big(\begin{bsmallmatrix}
0 & 1\\
\tau & 0
\end{bsmallmatrix},
0\oplus 1\big)
$, $\widetilde{d}\geq 0$.

\quad
Next, $B=1\oplus d$, $d\in \mathbb{C}$. 
If either $|\frac{x}{y}|\geq 1$ (or $|\frac{u}{v}|\geq 1$), then in case $\omega=i$ the second (the first) inequality of (\ref{ocenayv}) yields a contradiction. When $|\frac{x}{y}|,|\frac{u}{v}|\leq 1$ we multiply the second equation of (\ref{eqBF1}) for $b=0$, $a=1$ with $\frac{u}{v}$ and $\frac{x}{y}$, and simplify them: 
\[
\delta_5y^{2}+du^{2}=(\widetilde{b}+\epsilon_2)\tfrac{u}{v}, \quad 
x^{2}+\delta_5dv^{2}=(\widetilde{b}+\epsilon_2)\tfrac{x}{y} \quad \qquad (\delta_5\leq \tfrac{\delta+\delta^{2}}{1-\delta-\delta^{2}}).
\]
We add these equations and use (\ref{eqBF1}) for $b=0$, $a=1$ to get $\delta_5(\widetilde{d}+\epsilon_4)+(\widetilde{a}+\epsilon_1)=(\widetilde{b}+\epsilon_2)\tfrac{u}{v}+(\widetilde{b}+\epsilon_2)\tfrac{x}{y}$. Since $|\frac{x}{y}|,|\frac{u}{v}|\leq 1$, it fails for $\widetilde{a}\neq 0$, $\widetilde{b}=0$ and small $\epsilon,\delta$.
Finally, $c(s)=1$, $\tau(s)=1-s^{2}$, 
$P(s)=\frac{1}{\sqrt{\widetilde{b}}}e^{-i\frac{\pi}{4}}\begin{bsmallmatrix}
s^{2}e^{i\frac{\pi}{4}} & \widetilde{b} s^{-1} \\
s  & s^{-1}
\end{bsmallmatrix}$, 
$B(s)=1\oplus \widetilde{b}^{2}$ 
%
$
\big(\begin{bsmallmatrix}
0 & 1\\
1 & i
\end{bsmallmatrix},
\begin{bsmallmatrix}
0 & \widetilde{b}\\
\widetilde{b} & 0
\end{bsmallmatrix}\big)\to 
\big(\begin{bsmallmatrix}
0 & 1\\
\tau & 0
\end{bsmallmatrix},
1\oplus d\big)
$, 
while, $c(s)=1$, $\tau(s)=1-s^{3}$, 
$P(s)=\frac{1}{\sqrt{2}}e^{i\frac{\pi}{4}}\begin{bsmallmatrix}
s\widetilde{b}e^{-i\alpha(s)} & -is^{-1}e^{i\alpha(s)} \\
-is e^{i\alpha(s)} & (\widetilde{b}s)^{-1}e^{-i\alpha(s)}
\end{bsmallmatrix}$, 
$B(s)=1\oplus \widetilde{b}^{2}e^{4\alpha(s)+\beta(s)}$, $\sin (\alpha(s))=s^{3}$, $\sin(\frac{\beta(s)}{2})=-s^{2}$ 
gives 
$
\big(\begin{bsmallmatrix}
0 & 1\\
1 & 0
\end{bsmallmatrix},
\begin{bsmallmatrix}
0 & \widetilde{b}\\
\widetilde{b} & 1
\end{bsmallmatrix}\big)\to 
\big(\begin{bsmallmatrix}
0 & 1\\
\tau & 0
\end{bsmallmatrix},
1\oplus d\big)
$, $\widetilde{b} > 0$, $d\in \mathbb{C}$.

\item $B=\begin{bsmallmatrix}
0 & b\\
b & e^{i\varphi} 
\end{bsmallmatrix}$, \quad $0\leq \varphi<\pi$, $b>0$



Let $a=0$ and $\widetilde{B}=\widetilde{a}\oplus \widetilde{d}$.
Lemma \ref{lemapsi2} (D\ref{p3bd}) for $\widetilde{a}\neq 0$ implies
$\tfrac{v}{u}=\tfrac{i(-1)^{l}
\sqrt{\widetilde{a}\widetilde{d}}+\epsilon_2'}{\widetilde{a}+\epsilon_1}=i(-1)^{l}\sqrt{\frac{\widetilde{d}}{\widetilde{a}}}+\epsilon_2''$, \small 
$|\epsilon_2'|\leq \left\{\begin{array}{ll}
\tfrac{\epsilon (4|\max\{\widetilde{d},\widetilde{a}\}|+2+|\widetilde{a}\widetilde{d}|)}{|\widetilde{a}\widetilde{d}|}, & \det \widetilde{B}\neq 0\\
\sqrt{\epsilon (4|\max\{\widetilde{d},\widetilde{a}\}|+3)}, & \det \widetilde{B}= 0
\end{array}\right.
$
\normalsize, $|\epsilon_2''|\leq \frac{2}{\widetilde{a}}(|\epsilon_2'|+\epsilon\sqrt{\frac{\widetilde{d}}{\widetilde{a}}})$, $l\in \mathbb{Z}$, provided that $\epsilon\leq \frac{|\widetilde{a}|}{2}$. 
It contradicts (\ref{ocena11xuyv}), (\ref{ocenayv}) for $\omega=i$. If $\widetilde{a}=1$, $\widetilde{d}=|\widetilde{d}|e^{i\widetilde{\vartheta}}\neq 0$, $0<\widetilde{\vartheta}<\pi$ we apply (\ref{ocenah}) to deduce $\psi=\kappa-\eta-\frac{\widetilde{\vartheta}}{2}-\frac{\pi}{2}-l\pi$ with $|\sin \psi|\leq \frac{|\epsilon_2''|}{|\sqrt{\widetilde{d}}|}$. Hence
\begin{align*}
\overline{x}v+\overline{u}y=\overline{x}u\tfrac{v}{u}+y\overline{v}\tfrac{\overline{u}}{\overline{v}}
=-(-1)^{l}e^{i(\frac{\widetilde{\vartheta}}{2}+\psi)}\big(\Ima(\overline{x}u)|\tfrac{v}{u}|+\Ima(y\overline{v})|\tfrac{\overline{u}}{\overline{v}}|\big)
+\Rea(\overline{x}u)\tfrac{v}{u}+\Rea(y\overline{v})\tfrac{\overline{u}}{\overline{v}}.
%
%
\end{align*}
Using (\ref{boundd})) and $\big||\tfrac{v}{u}|-|\sqrt{\frac{\widetilde{d}}{\widetilde{a}}}|\big|\leq|\epsilon_2''|$, the above calculation and (\ref{ocenah}) gives
\vspace{-1mm}
\[
\psi'=k\pi-\big(\tfrac{\widetilde{\vartheta}}{2}+\psi+(l+1)\pi\big),\qquad |\sin \psi'|\leq  2\delta\big(1+|\sqrt{\tfrac{\widetilde{d}}{\widetilde{a}}}|+|\epsilon_2''|+(|\sqrt{\tfrac{\widetilde{d}}{\widetilde{a}}}|-|\epsilon_2''|)^{-1} \big),
\]
\vspace{-1mm}
which fails for small $\epsilon,\delta$ (recall $|\sin \psi|\leq \frac{|\epsilon_2''|}{|\sqrt{\widetilde{d}}|}$, $0<\widetilde{\vartheta}<\pi$).
%
%
%
%
%
%
%
Next, $c(s)=-1$, $P(s)=\begin{bsmallmatrix}
-\frac{2is}{3\widetilde{b}} & \frac{1}{s}\\
\frac{s}{3} & \frac{2i\widetilde{b}}{s}
\end{bsmallmatrix}$, 
$B(s)=\begin{bsmallmatrix}
0 & \widetilde{b} \\
\widetilde{b}  & i
\end{bsmallmatrix}$, $\tau(s)=1-\frac{s^{2}}{2\widetilde{b}}$ implies 
$
 \big(\begin{bsmallmatrix}
0 & 1\\
1 & i
\end{bsmallmatrix},
\begin{bsmallmatrix}
0 & \widetilde{b}\\
\widetilde{b} & 0
\end{bsmallmatrix}\big)\to 
\big(\begin{bsmallmatrix}
0 & 1\\
\tau & 0
\end{bsmallmatrix},
\begin{bsmallmatrix}
0 & b\\
b & e^{i\varphi}
\end{bsmallmatrix}\big)
$, $\widetilde{b}> 0$.







\quad
For $\tau(s)=\left\{
\begin{array}{ll}
1-s\sqrt{\widetilde{a}+s}, & \omega=i\\
1-s^{2}, & \omega=0
\end{array}\right.$,  
$P(s)=e^{i\frac{\pi}{4}}\begin{bsmallmatrix}
\sqrt{\widetilde{a}+s} & \frac{-i}{s}\\
s^{3}e^{-i\frac{\pi}{4}} & \frac{1}{\sqrt{a}+s}
\end{bsmallmatrix}$, 
$B(s)=\begin{bsmallmatrix}
-i & \frac{\sqrt{a}+s}{s} \\
\frac{\sqrt{\widetilde{a}+s}}{s} & -i(\widetilde{a}+s)(\widetilde{d}-\frac{1}{s^{2}})
\end{bsmallmatrix}$, $c(s)=-1$ we get 
$
\big(\begin{bsmallmatrix}
0 & 1\\
1 & \omega
\end{bsmallmatrix},\widetilde{a}\oplus \widetilde{d},
\big)\to 
\big(\begin{bsmallmatrix}
0 & 1\\
\tau & 0
\end{bsmallmatrix},
\begin{bsmallmatrix}
e^{i\varphi} & b\\
b & d
\end{bsmallmatrix}\big)
$, $d\in \mathbb{C}$, $\widetilde{a}\geq 0$. 

\item 
$B=\begin{bsmallmatrix}
0 & b \\
b & 0 
\end{bsmallmatrix}$, $b>0$

We multiply the first and the second equality of (D\ref{p3bd}) and (D\ref{p3ba}) of Lemma \ref{lemapsi2} for $\widetilde{a}\neq 0$ to get a contradiction with (\ref{ocena11xuyv}) for $\omega=i$ and small $\epsilon,\delta$.
Taking $c(s)=1$, $\tau(s)=1-s$, 
$P(s)=e^{-i\frac{\pi}{4}}\begin{bsmallmatrix}
se^{i\frac{\pi}{4}} & s^{-1} \\
s  & i
\end{bsmallmatrix}$, 
$B(s)=\frac{\widetilde{d}+s}{2}s\begin{bsmallmatrix}
0 & 1 \\
1 & 0
\end{bsmallmatrix}$ 
shows  
$
\big(\begin{bsmallmatrix}
0 & 1\\
1 & i
\end{bsmallmatrix},
0\oplus \widetilde{d}\big)\to 
\big(\begin{bsmallmatrix}
0 & 1\\
\tau & 0
\end{bsmallmatrix},
\begin{bsmallmatrix}
0 & b\\
b & 0
\end{bsmallmatrix}\big)
$.

\quad
For $\widetilde{a}=0$, $\widetilde{b}>0$ we have $b=\widetilde{b}+\delta'$, with $|\delta'|\leq \delta\widetilde{b}^{2}+\max\{\epsilon,\frac{\delta^{2}}{\nu^{2}}\}(4\max\{\widetilde{b},\widetilde{b}^{2},1\}+2)^{2}$ (see Lemma \ref{lemadet} (\ref{pathlema2ii} and (\ref{ocenakoren})); recall $1-\tau\leq \delta$).  
If $\omega=i$ (hence $\widetilde{d}=0$) the last equation of (\ref{eqadb}) for $\widetilde{d}=0$ contradicts the second estimate of (\ref{ocena11xuyv}). Next, let  $\omega=0$ (hence $\widetilde{d}=1$).
Using $2bvy=1+\epsilon_4$ (see (\ref{eqadb})) and $|\Rea(\overline{y}v)|\leq \delta$ (see (\ref{boundd})), we have $|\Ima (\overline{y}v)|\geq |yv|-|\Rea(\overline{y}v)|\geq \frac{1-\epsilon}{\overline{b}+|\delta'|}-\delta$.
%
%
Further Lemma \ref{lemapsi2} gives $2bvx=((-1)^{l+1}+1)\widetilde{b}+\epsilon_2'$, $2buy=((-1)^{l}+1)\widetilde{b}+\epsilon_2''$, $l\in \mathbb{Z}$, where $|\epsilon_2'|,|\epsilon_2''|\leq 
\frac{\epsilon(4\max\{1,\widetilde{b}\}+2+\widetilde{b}^{2})}{\widetilde{b}^{2}}$. So either $2bvx=2\widetilde{b}+\epsilon_2'$, $2buy=\epsilon_2''$ or $2buy=2\widetilde{b}+\epsilon_2''$, $2bvx=\epsilon_2'$. In the first case we also have $\overline{x}v=(-1)^{k}+\delta_2'$ with $|\delta_2'|\leq \delta+\frac{|\epsilon_2''|}{2(\widetilde{b}-|\delta'|)}$ (see (\ref{boundd})). 
We combine all facts:
\begin{equation*}
|\tfrac{y}{x}|^{2}=\tfrac{2bvy \,\overline{y}v}{2bvx\,\overline{x}v}=\tfrac{(1+\epsilon_4)(i\Ima (\overline{y}v)+\delta_0)}{(2\overline{b}+\epsilon_2')((-1)^{k}+\delta_2')}
\end{equation*}
For sufficiently small $\epsilon,\delta$ the right-hand (the left-hand) side is (not) real, a contradiction. 
The other case is treated similarly and yields a contradiction as well.
\end{enumerate}

\item \label{u1000}
$
(1\oplus 0,
\widetilde{B})\dashrightarrow
(1\oplus 0,
B)
$

If $B=\begin{bsmallmatrix}
0 & 1\\
1 & 0
\end{bsmallmatrix}$, 
$\widetilde{B}=
\widetilde{a}\oplus 1
$, 
$\widetilde{a}\geq 0$,
then \cite[Theorem 3.6, Case XI (a)]{ST} applies. (Taking $c(s)=1$, $P(s)=\begin{bsmallmatrix}
1 & s \\
\frac{\widetilde{a}}{2}  & 0
\end{bsmallmatrix}$ in (\ref{cPepsi}) proves $
{\ (1\oplus 0,
\widetilde{a}\oplus 0)\to 
\big(1\oplus 0,
\begin{bsmallmatrix}
0 & 1\\
1 & 0
\end{bsmallmatrix}\big)}
$.)

\quad
Next, Lemma \ref{lemapsi1} (\ref{lemapsi11}) with (C\ref{r11}) for $\alpha=1$ gives 
%
$\big||x|^2-1\big|\leq \delta$ and $|y|^{2}\leq \delta$, hence $|\frac{y}{x}|^{2}\leq \frac{\delta}{1-\delta}$.
%
%
%
%
%
%
When $B=a\oplus 0$ for $a\geq 0$,
then dividing the last two equalities of (\ref{eqBF1}) for $b=d=\widetilde{b}=0$, $\widetilde{d}=1$ yields  
$\frac{x}{y}=\frac{\epsilon_2}{1+\epsilon_4}$, which contradicts $|\frac{y}{x}|^{2}\leq \frac{\delta}{1-\delta}$ for small $\epsilon,\delta$.

\quad
Finally, 
$c(s)=1$, 
$P(s)=\begin{bsmallmatrix}
1 & 0 \\
\sqrt{\widetilde{a}-a} & s
\end{bsmallmatrix}$ in (\ref{cPepsi})
proves 
$(1\oplus 0,\widetilde{a}\oplus 0)\to(1\oplus 0,a\oplus 1)$, $a\geq 0$,
and $c(s)=1$, $P(s)=\begin{bsmallmatrix}
i & s^{3} \\
s^{-1}  & s
\end{bsmallmatrix}$, $B(s)=\frac{1}{s^{2}}\oplus 1$ implies $
\big(1\oplus 0,
\begin{bsmallmatrix}
0 & 1\\
1 & 0
\end{bsmallmatrix}\big)\to 
\big(1\oplus 0,
a\oplus 1\big)
$, $a>0$.


\item \label{g100110}
$
(1\oplus 0,
\widetilde{B})\to 
\big(\begin{bsmallmatrix}
0 & 1\\
1 & 0 
\end{bsmallmatrix},
B\big)
$ 


Lemma \ref{lemapsi1} (\ref{lemapsi11}) for (C\ref{r7}) for $\alpha=1$, $\beta=\omega=0$ yields
\begin{equation}\label{est144}
2\Rea(\overline{x}u)=(-1)^{k}+\delta_1,\,
2\Rea(\overline{y}v)=\delta_2, \,  \overline{x}v+\overline{u}y= \delta_4, \quad k\in \mathbb{Z}, |\delta_1|,|\delta_2|,|\delta_4|\leq \delta.
\end{equation}
Next, (\ref{yoxvou}) (compare (\ref{est144}) with (\ref{eqBF8})) is valid in this case as well. Since $|\det P|\leq \frac{\delta\sqrt{6}}{\nu}$ by Lemma \ref{lemadet} (\ref{lemadeta}), it follows from  (\ref{yoxvou}) that 
\begin{equation}\label{vuyxP1}
|\tfrac{v}{u}|,|\tfrac{y}{x}|\leq \delta\tfrac{\nu+\sqrt{6}}{\nu(1-\delta)}.
\end{equation}

\begin{enumerate}[label=(\alph*),wide=0pt,itemindent=2em,itemsep=6pt]

\item $B=1\oplus 0$

The bundle consists of one orbit, hence \cite[Theorem 3.6, Case XV (c)]{ST} applies.
(We take $c(s)=1$ and 
$P(s)=\begin{bsmallmatrix}
\sqrt{\widetilde{a}+s} & 0 \\
\frac{1}{2\sqrt{\widetilde{a}+s}}& s
\end{bsmallmatrix}$ to get 
$(1\oplus 0,
\widetilde{a}\oplus 0)\to
\big(\begin{bsmallmatrix}
0 & 1\\
1 & 0 
\end{bsmallmatrix},
1\oplus 0\big)
$ for $\widetilde{a}\geq 0$.)

\item \label{g100110a} 
$B=1\oplus d$, \quad $\Ima (d)>0$

For $\widetilde{B}=\widetilde{a} \oplus 1$ we have (\ref{eqBFadad}) with $\widetilde{b}=0$, $a=1$.  
By multiplying the second equation of (\ref{eqBFadad}) for $\widetilde{b}=0$ with $\delta_4:=\frac{v}{u}$, $\delta_5:=\frac{y}{x}$ and by simplifying it we obtain
%
\begin{equation}\label{advuyx}
ax^{2}\delta_4\delta_5+dv^{2}=\epsilon_2\delta_4, \qquad ay^{2}+dv^{2}\delta_4\delta_5=\epsilon_2\delta_5,
\end{equation}
respectively.
We add these equalities and using the first and the last equation of (\ref{eqBFadad}) we get the equality that fails for $\widetilde{d}\neq 0$ and $\epsilon,\delta$ small enough (recall (\ref{vuyxP1})):
\begin{equation}\label{vsotab}
\epsilon_2(\delta_4+\delta_5)=(ax^{2}+du^{2})\delta_4\delta_5+(ay^{2}+dv^{2})=(\widetilde{a}+\epsilon_1)\delta_4\delta_5+\widetilde{d}+\epsilon_4.
\end{equation}

\quad
Note that 
$\big(1\oplus 0,
\begin{bsmallmatrix}
0 & 1\\
1 & 0 
\end{bsmallmatrix}\big)\to
\big(\begin{bsmallmatrix}
0 & 1\\
1 & 0 
\end{bsmallmatrix},
1\oplus d\big)
$ will follow after we prove $\big(1\oplus 0,
\begin{bsmallmatrix}
0 & 1\\
1 & 0 
\end{bsmallmatrix}\big)\to
\big(1\oplus -1,\begin{bsmallmatrix}
0 & b\\
b & 0 
\end{bsmallmatrix}\big)
$ (see \ref{case10to1t} \ref{case10to1ta}).

\item 
$B=\begin{bsmallmatrix}
0 & b\\
b & 1
\end{bsmallmatrix}$, $b>0$



Let $\widetilde{B}=\widetilde{a}\oplus 1$, $\widetilde{a}\geq 0$.
From Lemma \ref{lemapsi2} (D\ref{p3bd}) for $\widetilde{b}=0$, $\widetilde{d}=1$ 
we get:
\begin{equation}\label{uvb0d1}
|u|\leq \tfrac{\sqrt{\widetilde{a}}+|\epsilon_2''|}{1-\epsilon}|v|,
\end{equation}
which clearly contradicts (\ref{vuyxP1}) for sufficiently small $\epsilon,\delta$.

\quad
For
$P(s)=\begin{bsmallmatrix}
-\frac{1}{2}s & s^{4}\\
s^{-1} & 2s 
\end{bsmallmatrix}$, $c(s)=1$, 
$B(s)=\begin{bsmallmatrix}
0 & s^{-2} \\
s^{-2}   & 1
\end{bsmallmatrix}$
we show 
$
\big(1\oplus 0,
\begin{bsmallmatrix}
0 & 1\\
1 & 0
\end{bsmallmatrix}\big)
\to 
\big(\begin{bsmallmatrix}
0 & 1\\
1 & 0
\end{bsmallmatrix},
\begin{bsmallmatrix}
0 & b\\
b & 1
\end{bsmallmatrix}\big)
$.


\end{enumerate}


\item \label{g100110tau}
$ (1\oplus 0,
\widetilde{B})\dashrightarrow 
\big(\begin{bsmallmatrix}
0 & 1\\
\tau & 0 
\end{bsmallmatrix},
\begin{bsmallmatrix}
a & b\\
b & d 
\end{bsmallmatrix}\big)
$, \quad $0\leq \tau <1$


From Lemma \ref{lemapsi1} (\ref{lemapsi11}) for (C\ref{r4}) with $\alpha=1$ we get
\begin{align}\label{ocena5psi1}
&\Rea(\overline{y}v)\leq \delta, \quad (1-\tau)\Ima (\overline{y}v)\leq \delta, \quad (1-\tau)|\overline{x}v|\leq \delta, \quad (1-\tau)|\overline{u}y|\leq \delta,\\
&\overline{x}v+\overline{u}y\leq \delta, \qquad \big|(1+\tau)\Rea(\overline{x}u)+i(1-\tau)\Ima(\overline{x}u)-\tfrac{1}{c}\big|\leq \delta.\nonumber
\end{align}
%
%
%

The last estimate yields either $\big|(1+\tau)\Rea(\overline{x}u)\big|\geq \frac{1-\delta}{2}$ or $\big|(1-\tau)\Ima(\overline{x}u)\big|\geq \frac{1-\delta}{2}$, thus
\begin{equation}\label{ocenaxuge} 
|\overline{x}u|\geq \tfrac{1-\delta}{4}.
\end{equation}

\begin{enumerate}[label=(\alph*),wide=0pt,itemindent=2em,itemsep=6pt]



\item \label{g100110taua}
$B=\begin{bsmallmatrix}
a & b\\
b & d 
\end{bsmallmatrix}$,
either $b> 0$ or $b=0$ and $ad=0$



First, let $\widetilde{B}=\widetilde{a}\oplus 1$, $\widetilde{a}\geq 0$; we have (\ref{uvb0d1}).
%
%
Using (\ref{ocena5psi1}), (\ref{ocenaxuge}) we thus get 
\[
\delta \tfrac{\sqrt{\widetilde{a}}+|\epsilon_2''|}{|1-\epsilon|}\geq (1-\tau)|\overline{x}v||\tfrac{u}{v}|=(1-\tau)|\overline{x}u|\geq \tfrac{1}{4}(1-\tau).
\]
Similarly, when $d=0$ then Lemma \ref{lemapsi2} (D\ref{p3ba}) for $\widetilde{b}=0$, $ \widetilde{d}=1$ and (\ref{ocena5psi1}), (\ref{ocenaxuge}) yield $|\frac{x}{y}|\leq \frac{\sqrt{\widetilde{a}}+|\epsilon_2'|}{1-\epsilon}$ and
$\tfrac{4\delta(\sqrt{\widetilde{a}}+|\epsilon_2'|)}{1-\epsilon}\geq 1-\tau$.
From Lemma \ref{lemadet} (\ref{lemadeta}) we obtain $\sqrt{\tau}|\det P|\leq \frac{\delta\sqrt{6}}{\nu}$. By combining the above statements with (\ref{xvuyre}), (\ref{xvuyre2}) we get $\Rea (\overline{x}u)\leq C\delta$, where a constant $C>0$ can be computed. Hence $(1-\tau) \Ima (\overline{x}u)\geq 1-\delta-C\delta$, and further 
\begin{equation}\label{yvfxu}
\frac{|yv|}{|xu|}=\frac{(1-\tau)|yv|}{(1-\tau)|xu|} \leq
\tfrac{\big|(1-\tau)\Ima(\overline{y}v)\big|+\big|(1-\tau)\Rea(\overline{y}v)\big|}{\big|(1-\tau)\Ima(\overline{x}u)\big|-\big|(1-\tau)\Rea(\overline{x}u)\big|}\leq \frac{2\delta}{1-\delta-2C\delta}.
\end{equation}
It is also easy to validate 
\begin{equation}\label{ocexvuy}
|\overline{x}v+\overline{u}y||\tfrac{u}{v}|\geq |ux|\big|1-\tfrac{|yv|}{|xu|}|\tfrac{u}{v}|^{2}\big|, \qquad
|\overline{x}v+\overline{u}y||\tfrac{x}{y}|\geq |ux|\big|1-\tfrac{|yv|}{|xu|}|\tfrac{x}{y}|^{2}\big|.
\end{equation}
We apply (\ref{ocena5psi1}) and the estimates on $|\frac{u}{v}|$, $|\frac{x}{y}|$, $|\frac{yv}{xu}|$ to (\ref{ocexvuy}) to get a contradiction for small $\epsilon,\delta$.
Next,  
$P(s)=\begin{bsmallmatrix}
-se^{i(\alpha(s)+\frac{\pi}{4})} & s^{3} \\
s^{-1}e^{i\frac{\pi}{4}} & 1
\end{bsmallmatrix}$, 
$B(s)=\begin{bsmallmatrix}
s^{-4}e^{-i\alpha(s)} & s^{-2} \\
s^{-2} & 1
\end{bsmallmatrix}$, 
$c(s)=-1$, $\tau(s)\to 0$, $\sin (\frac{\alpha(s)}{2})=\frac{\widetilde{a}s^{2}}{2}$
implies 
$
(1\oplus 0,
\widetilde{a}\oplus 1) 
\to 
\big(\begin{bsmallmatrix}
0 & 1\\
0 & 0
\end{bsmallmatrix},
\begin{bsmallmatrix}
\zeta^{*} & b\\
b & 1
\end{bsmallmatrix}\big)
$, $\zeta^{*}\in \mathbb{C}^{*}$.

\quad
Let $B=\begin{bsmallmatrix}
0 & b\\
b & 0 
\end{bsmallmatrix}$, $b>0$, $\widetilde{B}=\begin{bsmallmatrix}
0 & 1\\
1 & 0 
\end{bsmallmatrix}$. The first (the second) equation of (\ref{eqadb}) for $\widetilde{a}=0$ (for $\widetilde{b}=1$) combined with (\ref{ocenaxuge}) (with (\ref{ocena5psi1}) for $0\leq \tau\leq \frac{1}{2}$) yields $\epsilon\geq b|ux|\geq b\frac{1-\delta}{4}$ (and $1+\epsilon\geq b|vx+uy|\geq 4b\delta$), thus a contradiction for sufficiently small $\epsilon,\delta$ and $0\leq \tau\leq \frac{1}{2}$. If $1\geq \tau\geq \frac{1}{2}$ then Lemma \ref{lemadet} (\ref{lemadeta}),  (\ref{lemadetb}) leads to $|\det P|\leq \frac{2\sqrt{3}\delta}{\nu}$ and $b|\det P|\geq 1- 6\epsilon$, hence $\frac{8\sqrt{3}\epsilon \delta}{\nu(1-\delta)}\geq b \frac{2\sqrt{3}\delta}{\nu}\geq 1-6\epsilon$, which fails for small $\epsilon,\delta$.
Taking 
$P(s)=\begin{bsmallmatrix}
-s & s^{4} \\
s^{-1} & 2s
\end{bsmallmatrix}$, 
$B(s)=\begin{bsmallmatrix}
\zeta & \frac{1}{2}s^{-2} \\
\frac{1}{2}s^{-2} & 1
\end{bsmallmatrix}$ with $\frac{\zeta}{s^{2}}\to 0$ and $P(s)=\begin{bsmallmatrix}
s^{-1} & 2s \\
-s & s^{4}
\end{bsmallmatrix}$, 
$B(s)=\begin{bsmallmatrix}
1 & \frac{1}{2}s^{-2} \\
\frac{1}{2}s^{-2} & 0
\end{bsmallmatrix}$ (both with $c(s)=-1$, $\tau(s)\to 0$) in (\ref{cPepsi}) proves 
$
\big(1\oplus 0,
\begin{bsmallmatrix}
0 & 1\\
1 & 0
\end{bsmallmatrix}\big)\to 
\big(\begin{bsmallmatrix}
0 & 1\\
\tau & 0
\end{bsmallmatrix},
\begin{bsmallmatrix}
\zeta & b\\
b & e^{i\varphi}
\end{bsmallmatrix}\big)
$, $\zeta\in \mathbb{C}$ 
and $
\big(1\oplus 0,
\begin{bsmallmatrix}
0 & 1\\
1 & 0
\end{bsmallmatrix}\big)\to 
\big(\begin{bsmallmatrix}
0 & 1\\
\tau & 0
\end{bsmallmatrix},
\begin{bsmallmatrix}
e^{i\varphi} & b\\
b & 0
\end{bsmallmatrix}\big)
$ with
$b>0$, $0\leq\varphi<\pi$, respectively. 

\quad
Finally,
to see 
$
(1\oplus 0,
\widetilde{a}\oplus 0)\to 
\big(\begin{bsmallmatrix}
0 & 1 \\
\tau  & 0
\end{bsmallmatrix},
B\big)
$,
$0\leq \tau <1$, 
$\widetilde{a}\geq 0$, where $B$ is any of matrices 
$
\begin{bsmallmatrix}
a & b\\
b & e^{i\varphi}
\end{bsmallmatrix}
$, $a,b\geq 0$ and 
$\begin{bsmallmatrix}
e^{i\varphi} & b\\
b & d
\end{bsmallmatrix}
$, $d,b\geq 0$,
we take  
$P(s)=\begin{bsmallmatrix}
\frac{1}{\sqrt{\widetilde{a}+s}} & s \\
\sqrt{\widetilde{a}+s} & s
\end{bsmallmatrix}$, 
$B(s)=\begin{bsmallmatrix}
a(s) & b(s) \\
b(s) & 1
\end{bsmallmatrix}$ with $b(s)\to 0$, $\frac{a(s)}{\sqrt{s}}\to 0$  
or 
$P(s)=\begin{bsmallmatrix}
\sqrt{\widetilde{a}+s} & s\\
\frac{1}{\sqrt{\widetilde{a}+s}} & s \\
\end{bsmallmatrix}$, 
$B(s)=\begin{bsmallmatrix}
1 & b(s) \\
b(s) & d(s)
\end{bsmallmatrix}$ with $b(s)\to 0$, $\frac{d(s)}{\sqrt{s}}\to 0$ in (\ref{cPepsi}) ($c(s)=1$, $\tau(s)\to 0$ in both cases).
To prove 
$
(1\oplus 0,
\widetilde{a}\oplus 0)\to 
\big(\begin{bsmallmatrix}
0 & 1 \\
\tau  & 0
\end{bsmallmatrix},
\begin{bsmallmatrix}
0 & b \\
b & 0
\end{bsmallmatrix}\big)
$, 
$b> 0$, 
we put $P(s)=\begin{bsmallmatrix}
1 & s \\
1  & 0
\end{bsmallmatrix}$, $B(s)=\frac{\widetilde{a}+s}{2}\begin{bsmallmatrix}
0 & 1 \\
1  & 0
\end{bsmallmatrix}$, $c(s)=1$, $\tau(s)\to 0$ in (\ref{cPepsi}).
%

\item \label{g100110taub}
$B=a\oplus d$, \quad $a,d\neq 0$

For 
$c(s)=-i$, 
$P(s)=\begin{bsmallmatrix}
s & s^{3} \\
is^{-1} & s^{2}
\end{bsmallmatrix}$, 
$B(s)=\frac{1}{s^{4}}\oplus 1$ we get
$\big(1\oplus,\begin{bsmallmatrix}
0 & 1\\
1 & 0 
\end{bsmallmatrix}\big)\to 
\big(
\begin{bsmallmatrix}
0 & 1\\
0 & 0 
\end{bsmallmatrix},
a\oplus 1\big)$.
%

\quad
If $\tau\leq \frac{1}{2}$ then we have $|xv|,|uy|\leq 2\delta$, thus using (\ref{ocenaxuge} ) we get $|\frac{v}{u}|=|\frac{vx}{ux}|\leq 8\delta$ and $|\frac{y}{x}|=|\frac{uy}{ux}|\leq 8\delta$. On the other hand for $\tau \geq \frac{1}{2}$ we get $|\det P|\leq \frac{2\sqrt{3}\delta}{\nu}$ (Lemma \ref{lemadet} (\ref{lemadeta})), therefore (\ref{xvuyre}), (\ref{xvuyre2}), (\ref{ocena5psi1}) imply $|\Rea (\overline{x}u)||\frac{v}{u}|,|\Rea (\overline{x}u)||\frac{v}{u}|\leq 2\sqrt{3}\delta+\delta$. If $|\Rea (\overline{x}u)|\leq \sqrt{2\sqrt{3}\delta+\delta}$, then $(1-\tau)|\Ima (\overline{x}u)|\geq 1-\sqrt{(2\sqrt{3}+1)\delta}$ and similarly as in (\ref{yvfxu}) we obtain $|\frac{v}{u}|,|\frac{y}{x}|\leq \frac{2\delta}{1-2\sqrt{(2\sqrt{3}+1)\delta}}$.
If $\widetilde{B}=\widetilde{a}\oplus \widetilde{d}$ with $\widetilde{d}\neq 0$, then in any case we proceed mutatis mutandis as in \ref{g100110} \ref{g100110a} to get a contradiction for small $\epsilon,\delta$.

\end{enumerate}


\item \label{case10to1t}
$(1\oplus 0,\widetilde{B})\dashrightarrow
(1\oplus e^{i\theta},B)$, \;$0\leq \theta \leq \pi$

From Lemma \ref{lemapsi1} (\ref{lemapsi11}) with (C\ref{r32}) for $\alpha=1$ and $0< \theta<\pi$ we have 
%
\begin{equation}\label{est0101t}
\big||x|^2+e^{i\theta}|u|^2-c^{-1}\big|\leq \delta,\, \big||y|^2+e^{i\theta}|v|^2\big|\leq \delta, \, \sin(\theta) |\overline{u}v|\leq \delta, \, |\overline{x}y+\cos (\theta) \overline{u}v|\leq \delta.
\end{equation}
Further Lemma \ref{lemapsi1} (\ref{lemapsi11}) with (C\ref{r9}) for $\alpha=1$, $\sigma=-1$ yields that 
\begin{equation}\label{est0101n}
\big||x|^2-|u|^{2}\big|=1 +\delta_1 ,\quad \big||y|^2-|v|^{2}\big|=\delta_4,\quad  |\overline{x}y-\overline{u}v|= \delta_2,\quad |\delta_1|,|\delta_2|,|\delta_4|\leq \delta,
\end{equation}
while from (C\ref{r9}) for $\alpha=1$, $\sigma=1$ we deduce
\begin{equation}\label{est0101p}
|x|^2+|u|^{2}=1+ \delta_1 ,\qquad |y|^{2},|v|^{2}\leq \delta,\quad |\delta_1|\leq \delta.
\end{equation}
\begin{enumerate}[label=(\alph*),wide=0pt,itemindent=2em,itemsep=6pt]

\item \label{case10to1ta} $\widetilde{B}=\begin{bsmallmatrix}
0 & 1\\
1 & 0
\end{bsmallmatrix}$\\

%

\vspace{-2mm}

%
%
%
%
Taking 
$c(s)=1$, $P(s)=\frac{1}{\sqrt{2}}\begin{bsmallmatrix}
1 & s\sqrt{2} \\
i  & -is\sqrt{2}
\end{bsmallmatrix}$, 
$B(s)=\frac{\sqrt{2}}{2s}I_2
$ gives $
 \big(1\oplus 0,
\begin{bsmallmatrix}
0 & 1\\
1 & 0
\end{bsmallmatrix}\big)\to 
(I_2,
aI_2
)
$, 
$
a> 0$.
%
If $B=dI_2$ and $\theta=\pi$, then Lemma \ref{lemadet} (\ref{lemadeta}), (\ref{lemadetb}) gives $|\det P|\leq \frac{\delta\sqrt{6}}{\nu}$ and $d|\det P|\geq 1-6\epsilon$.
The first equation of (\ref{eqBF1}) for $a=d$,$b=\widetilde{a}=0$ yields $\epsilon\geq \big|d(x^2+u^{2})\big|\geq |d|\big||x|^2-|u|^{2}\big|\geq |d|(1-\delta)$ (see (\ref{est0101n})). Thus $\frac{\epsilon\delta\sqrt{6}}{\nu} \geq (1-\delta)(1-6\epsilon)$, which fails for $\epsilon,\delta\leq \frac{1}{12}$.


\item \label{case10to1tbb}
$\widetilde{B}=\widetilde{a}\oplus 0$, $\widetilde{a}\geq 0$

We take $c(s)=e^{-i\theta}$, $P(s)=\begin{bsmallmatrix}
s & s \\
1  & s
\end{bsmallmatrix}$, $B(s)=\begin{bsmallmatrix}
a(s) & b(s) \\
b(s)  & d(s)
\end{bsmallmatrix}$ with $d(s)\to \widetilde{a}$, $sa(s),b(s)\to 0$ 
to prove a path 
$
 (1\oplus 0,
\widetilde{a}\oplus 0)\to 
\big(1\oplus e^{i\theta},
\begin{bsmallmatrix}
a & b \\
b  & d
\end{bsmallmatrix}\big)
$ for $d> 0$, $b\geq 0$, $0\leq \theta \leq \pi$. 

\item \label{case10to1tc}
$\widetilde{B}=\widetilde{a}\oplus 1$, $\widetilde{a}\geq 0$

\begin{enumerate}[label=(\roman*),wide=0pt,itemindent=4em,itemsep=3pt]

\item \label{case10to1tci}
$B=\begin{bsmallmatrix}
a & b\\
b & d
\end{bsmallmatrix}$, $|a|+|d|\neq 0$, $a\neq d$

For $c(s)=1$, $P(s)=\begin{bsmallmatrix}
1 & s \\
0  & s
\end{bsmallmatrix}$, 
$B(s)=\begin{bsmallmatrix}
a(s) & b(s) \\
b(s)  & s^{-2}
\end{bsmallmatrix}$, $a(s)\to \widetilde{a}$, $sb(s)\to 0$ and
$c(s)=e^{-i\theta}$, $P(s)=\begin{bsmallmatrix}
0 & s \\
1  & s
\end{bsmallmatrix}$, 
$B(s)=\begin{bsmallmatrix}
s^{-2} & b(s) \\
b(s)  & d(s)
\end{bsmallmatrix}$ with $d(s)\to \widetilde{a}$, $sb(s)\to 1$,
we get 
$
(1\oplus 0,
\widetilde{a}\oplus 1)\to 
\big(1\oplus e^{i\theta},
\begin{bsmallmatrix}
a & b\\
b & d
\end{bsmallmatrix}\big)
$ for $b\geq 0$, $d>0$ and $b\geq 0$, $a>0$, respectively.
Next, $c(s)=1$, $P(s)=\begin{bsmallmatrix}
-i & 0 \\
is  & \frac{s}{\sqrt{\widetilde{a}+s}}
\end{bsmallmatrix}$, 
$B(s)=(\widetilde{a}+s)\begin{bsmallmatrix}
0 & s^{-1} \\
s^{-1}  & s^{-2}
\end{bsmallmatrix}$ and
$c(s)=e^{-i\theta}$, $P(s)=\begin{bsmallmatrix}
is  & \frac{s}{\sqrt{\widetilde{a}+s}}\\
-i & 0 
\end{bsmallmatrix}$, 
$B(s)=(\widetilde{a}+s)\begin{bsmallmatrix}
s^{-2} & s^{-1} \\
s^{-1}  & 0
\end{bsmallmatrix}$ in (\ref{cPepsi})
imply  
$
(1\oplus 0,
\widetilde{a}\oplus 1)\to 
\big(1\oplus e^{i\theta},
\begin{bsmallmatrix}
0 & b\\
b & d
\end{bsmallmatrix}\big)
$ and 
$
(1\oplus 0,
\widetilde{a}\oplus 1)\to 
\big(1\oplus e^{i\theta},
\begin{bsmallmatrix}
a & b\\
b & 0
\end{bsmallmatrix}\big)
$ for $b,a,d > 0$.


\item $B=\begin{bsmallmatrix}
0 & b\\
b & 0
\end{bsmallmatrix}$, $b> 0$ \quad ($0<\theta\leq \pi$)

%
%
The second estimate of (\ref{est0101t}) gives $(\sin \theta) |v|^{2}\leq \delta$, thus either $ |v|^{2}\leq \sqrt{\delta}$ or $\sin \theta \leq \sqrt{\delta}$ (or both). If $ |v|^{2}\leq \sqrt{\delta}$, then the second estimate of (\ref{est0101t}) (or (\ref{est0101n})) implies $|y|^{2}\leq \delta+\sqrt{\delta}$.
Since we have (\ref{eqabd}) for $\widetilde{d}=1$, we further get $|u|,|x|\leq (\delta+\sqrt{\delta})  \frac{|\sqrt{\widetilde{a}}|+\max\{|\epsilon_2'|,|\epsilon_2''|\}}{(1-\epsilon)}$, which contradicts the first estimate of (\ref{est0101t}) and (\ref{est0101n}).

\quad
Let now $v,y\neq 0$ and $\sin\theta\leq \sqrt{\delta}$. If $\theta\in (0,\frac{\pi}{4})$, then $(1-\cos \theta)|v|^{2}=2(\sin^{2}\frac{\theta}{2})|v|^{2}\leq 2(\sin ^{2}\theta)|v|^{2}$, hence the second estimate of (\ref{est0101t}) yields 
\[
\delta\geq \big||y|^{2}+\cos\theta |v|^{2}\big|\geq \big||y|^{2}+ |v|^{2}\big|-(1-\cos \theta)|v|^{2}\geq \big||y|^{2}+|v|^{2}\big|-2\delta.
\]
Hence $|y|^{2},|v|^{2}\leq 3 \delta$ and it gives a contradiction again.
If $\theta\in (\frac{3\pi}{4},\pi]$, then $|\cos \frac{\theta}{2}|=|\sin\frac{\pi-\theta}{2}|\leq |\sin (\pi-\theta)|$, and by combining it with the first equation in (\ref{eqabd}) for $\widetilde{d}=1$ and the third estimate of (\ref{est0101t}) we get $(\cos \frac{\theta}{2})|u|^{2}\leq  (\sin \theta)|uv||\frac{u}{v}|\leq \delta\frac{\sqrt{\widetilde{a}}+|\epsilon_2'|}{1-\epsilon}$. 
Since $|x|^{2}+e^{i\theta}|u|^{2}=|x|^{2}- |u|^{2}+2(\cos\frac{\theta}{2})|u|^{2}e^{i\frac{\theta}{2}}$, the first estimate of (\ref{est0101t}) yields 
\begin{equation}\label{xucd5}
|x|^{2}-|u|^{2}=c^{-1}+\delta_5, \qquad  |\delta_5|\leq \delta+2\delta\tfrac{\sqrt{\widetilde{a}}+|\epsilon_2'|}{1-\epsilon} .
\end{equation}
%
%
Next, (\ref{eqabd}) for $\widetilde{d}=1$ yields $|\frac{x}{y}|,|\frac{u}{v}|\leq \frac{|\sqrt{\widetilde{a}}|+\max\{|\epsilon_2'|,|\epsilon_2''|\}}{(1-\epsilon)} $.
%
%
%
%
%
From the first estimate of (\ref{est0101t}) (or (\ref{est0101n})) we deduce either $|x|^{2}\geq \frac{1-\delta}{2}$ or $|u|^{2}\geq \frac{1-\delta}{2}$, and the second estimate of (\ref{est0101t}) (or (\ref{est0101n})) gives 
$|y|,|v|\geq \frac{(1-\epsilon)(1-\delta)}{2(|\sqrt{\widetilde{a}}|+\max\{|\epsilon_2'|,|\epsilon_2''|\})}-\sqrt{\delta}$.
%
%
To conclude we use the (\ref{ocenaxysuv}) with (\ref{est0101t}), (\ref{est0101n}) and (\ref{xucd5}) to obtain an inequality that fails for small $\epsilon,\delta$:
\vspace{-1mm}
\begin{equation*}
\delta 
\geq\big|\overline{x}y+(\cos \theta)\overline{u} v\big|\geq \big||\overline{x}y|-|\overline{u} v|\big|-|uv||1-\cos \theta|
\geq \tfrac{1-|\delta_5|}{2\frac{|u|}{|v|}+\frac{\sqrt{1+|\delta_5|}}{|y|}}-\big(\tfrac{|u|}{|v |}+\tfrac{\sqrt{1+|\delta_5|}}{|v|}\big)\delta-2\delta.
\end{equation*}

\vspace{-2mm}

\item $B=aI_2$, $a>0$ \qquad (hence $A=1\oplus \sigma$, $\sigma=e^{i\theta}\in \{1, -1\}$) 

The first equation of (\ref{eqBFadad}) for $a=d$ and (\ref{est0101n}) yield 
\begin{equation}\label{axau}
\tfrac{\epsilon+|\widetilde{a}|}{a}\geq |x^2+u^{2}|\geq \big||x|^2-|u|^{2}\big|.
\end{equation}
If $\sigma=1$, then the last equation of (\ref{eqBFadad}) for $a=d$ and the last estimates in (\ref{est0101p}) imply $\frac{1-\epsilon}{a}\leq y^{2}+v^{2}\leq 2\delta$. Hence (\ref{axau}) gives $\big||x|^2-|u|^{2}\big|\leq \delta_0:=\frac{2\delta(|\widetilde{a}|+\epsilon)}{1-\epsilon}$. The first equation of (\ref{eqBFadad}) further yields that $|x|,|u|\geq \frac{1-\delta}{2}-\delta_0$ with $\frac{|v|}{|u|},\frac{|y|}{|x|}\leq \frac{2\delta}{\frac{1-\delta}{2}-2\delta_0}$.
If $\widetilde{B}=\widetilde{a}\oplus 1$, we proceed mutatis mutandis as in \ref{g100110} \ref{g100110a} to get a contradiction.

\quad
Let $\sigma=-1$.
By Lemma \ref{lemadet} (\ref{lemadeta}), (\ref{lemadetb}) we have $a \frac{\delta\sqrt{6}}{\nu}\geq a|\det P|=|\sqrt{\widetilde{a}}+\delta'|$ with $\delta'\leq \epsilon\frac{4\widetilde{a}+2}{\widetilde{a}}$ if $\widetilde{a}\neq 0$ (or $\delta'\leq \epsilon\sqrt{4\widetilde{a}+2}$ if $\widetilde{a}= 0$). If $\widetilde{a}\neq 0$, we combine it with the first equality of (\ref{est0101n}) and (\ref{axau}), to obtain
$\frac{\delta\sqrt{6}(\epsilon+|\widetilde{a}|)}{\nu(|\sqrt{\widetilde{a}}|-|\delta'|)}\geq\big||x|^2-|u|^{2}\big|\geq(1-\delta)$, which fails for small $\epsilon,\delta$.
Next, if $\widetilde{a}=0$ then (\ref{axau}) and (\ref{est0101n}) imply $a\leq \frac{\epsilon}{1-\delta}$. 
Using the second equation of (\ref{exu2v2}) and (\ref{exuv}) we deduce
$\frac{u}{v}=\frac{ae^{2i\varphi}\delta_2-\epsilon_4}{ae^{2i\varphi}\delta_4-1-\epsilon_4}$, while the last equation of (\ref{eqBFadad}) for $a=d$, $\widetilde{d}=1$ and the second inequality of (\ref{est0101n}) give $2|v^{2}|\geq \frac{1-\epsilon}{2a}-\delta$.
Applying this and (\ref{est0101n}) to (\ref{ocenaxysuv}) leads to an inequality that fails for small $\epsilon,\delta$.

\end{enumerate}

\end{enumerate}

\item \label{b1010}
$
(1\oplus 0,
\widetilde{B})\dashrightarrow
\big(\begin{bsmallmatrix}
0 & 1\\
1 & i 
\end{bsmallmatrix},B\big)
$, 

From Lemma \ref{lemapsi1} (\ref{lemapsi11}) for (C\ref{r10}) with $\alpha=1$, $c^{-1}=e^{i\Gamma}$ we deduce
%
%
%
\begin{equation}\label{est16}
|\overline{x}v+\overline{u}y|\leq \delta, \quad  |v|^{2},|\overline{u}v|\leq \delta, \quad \big|2\Rea(\overline{y}v)\big|\leq \delta , \quad 
\big|2\Rea(\overline{x}u)+i|u|^{2}-e^{i\Gamma}\big|\leq \delta. 
\end{equation}
%
%
%
%
%
%

\begin{enumerate}[label=(\alph*),wide=0pt,itemindent=2em,itemsep=6pt]

\item \label{b1010a}
$
B=
\begin{bsmallmatrix}
0 & b\\
b & 0 
\end{bsmallmatrix}
$, $b>0$

If $\widetilde{B}=\widetilde{a}\oplus 1$ we again have (\ref{eqabd}) for $\widetilde{d}=1$,
and by combining it with $|v|^{2}\leq \delta$ (see (\ref{est16})), we get $|u|\leq \frac{(\sqrt{\widetilde{a}}+|\epsilon_2''|)\sqrt{\delta}}{1-\epsilon}$ with $\epsilon_2''$ is as in (\ref{eqabd}). 
%
%
The last estimate of (\ref{est16}) then yields
%
$|2\Rea (\overline{x}u)|
\geq 1+\delta+\tfrac{\delta(\sqrt{\widetilde{a}}+|\epsilon_2'|)^{2}}{(1-\epsilon)^{2}}$.
%
%
By applying this, $ \frac{\delta\sqrt{6}}{\nu}\geq |\det P|$ (Lemma \ref{lemadet} (\ref{lemadeta})) and the first estimate of (\ref{est16}) to (\ref{xvuyre2}) we get
$|\frac{v}{u}|(1-\delta-\frac{\delta(\sqrt{\widetilde{a}}+|\epsilon_2''|)^{2}}{(1-\epsilon)^{2}})\leq \delta-\frac{\delta\sqrt{6}}{\nu}$, which contradicts (\ref{eqabd})
for small $\epsilon,\delta$.

\quad
Taking 
$B(s)=\frac{1}{s}\begin{bsmallmatrix}
0 & 1\\
1 & 0 
\end{bsmallmatrix}$, $P(s)=
\begin{bsmallmatrix}
s^{2} & s \\
1 & s
\end{bsmallmatrix}$, $c(s)=-i$ 
gives
$\big(1\oplus 0,
\begin{bsmallmatrix}
0 & 1\\
1 & 0 
\end{bsmallmatrix}\big)\to
\big(\begin{bsmallmatrix}
0 & 1\\
1 & i 
\end{bsmallmatrix},\begin{bsmallmatrix}
0 & b\\
b & 0 
\end{bsmallmatrix}\big)$. 
%
%


\item \label{b1010b}
$B=a\oplus d$, \quad $a\geq 0$, $d\in \mathbb{C}$

%


First, $B(s)=\frac{1}{s^{2}}\oplus \widetilde{a}$, $P(s)=
\begin{bsmallmatrix}
s^{2} & s \\
1 & s^{2}
\end{bsmallmatrix}$, $c(s)=-i$ 
give
$(1\oplus 0,
\widetilde{a}\oplus 1)\to
\big(\begin{bsmallmatrix}
0 & 1\\
1 & i 
\end{bsmallmatrix},a\oplus d\big)$, $a>0$.
%
%
%
%

\quad
If $\widetilde{B}=\widetilde{a}\oplus 1$, $\widetilde{a}\geq 0$ and $a=0$, then the last two equations of (\ref{eqBFadad}) for $\widetilde{d}=1$, $a=\widetilde{b}=0$ give $(1+\epsilon_4)u=\epsilon_2v$ with $|\frac{u}{v}|\leq \frac{\epsilon}{1-\epsilon}$.
Thus $|u|^{2}=|\frac{u}{v}||uv|\leq \frac{\epsilon^{2}\delta}{(1-\epsilon)^{2}}$ and $|2\Rea (\overline{u}x)|\geq 1-\delta-\frac{\delta\epsilon^{2}}{(1-\epsilon)^{2}}$. 
By applying this, $ \frac{\delta\sqrt{6}}{\nu}\geq |\det P|$ (Lemma \ref{lemadet} (\ref{lemadeta})) and (\ref{est16}) to (\ref{xvuyre2}) we get  $|\frac{v}{u}|(1-\delta-\frac{\delta\epsilon^{2}}{(1-\epsilon)^{2}})\leq \delta-\frac{\delta\sqrt{6}}{\nu}$; it contradicts $|\frac{u}{v}|\leq \frac{\epsilon}{1-\epsilon}$ for small $\epsilon,\delta$.
%
%
%
%


\end{enumerate}

\vspace{3mm}

\quad
So far we have proved Theorem \ref{izrek} (\ref{izrek3}), (\ref{izrek5}). In particular, we obtained a path from $(1\oplus 0,\widetilde{a}\oplus 0)$ with $\widetilde{a}>0$ to all bundles, except to $(0_2,B)$ for $B\in \mathbb{C}_S^{2\times 2}$ and $(A,0_2)$ for $0_2\neq A\in \mathbb{C}^{2\times 2}$. Further, Theorem \ref{izrek} (\ref{izrek1}), (\ref{1izrek1}) can be concluded for all cases except maybe for $(0_2,1\oplus 0)$.

\item \label{p0010}
$
(0_2,
1\oplus \sigma)\dashrightarrow (A,B)$

\begin{enumerate}[label=(\alph*),wide=0pt,itemindent=2em,itemsep=6pt]

\item \label{p0011}
$\sigma=1$ \quad ($\widetilde{B}=I_2$)


%


We prove $(0_2,I_2)\to
\big(A,\begin{bsmallmatrix}
a & b\\
b & d
\end{bsmallmatrix}\big)
$, $b>0$, $A\in \mathbb{C}^{2\times 2}$ by taking 
$P(\epsilon)=\frac{s}{\sqrt{2}}e^{i\frac{\pi}{4}}\begin{bsmallmatrix}
1 & -i \\
-i & 1
\end{bsmallmatrix}$, $c(s)=1$, $B(s)=\begin{bsmallmatrix}
a(s) & s^{-2}\\
s^{-2} & d(s)
\end{bsmallmatrix}$, $a(s),d(s)\leq \frac{1}{s}$.
Next, $P(s)=\frac{1}{\sqrt{2}}\begin{bsmallmatrix}
s & s\\
1 & -1
\end{bsmallmatrix}$, $B(s)=\frac{1}{s^{2}}\oplus 1$, $c(s)=1$ give  
$(0_2,I_2)\to (A,a\oplus 1)$ with $a>0$ and either $A=\begin{bsmallmatrix}
0 & 1\\
0 & 0
\end{bsmallmatrix}$ or $A=1\oplus 0$, while $P(s)=\frac{1}{\sqrt{2}}\begin{bsmallmatrix}
s & s\\
s & -s
\end{bsmallmatrix}$, $B(s)=\frac{1}{s^{2}}\oplus (\frac{1}{s^2}+ \frac{d-a}{s})$, $c(s)=1$ yield   
$(0_2,I_2)\to(1\oplus \sigma,a\oplus d)$, $d\geq a>0$.

\item $\sigma=0$ \quad ($\widetilde{B}=1\oplus 0$)

\quad
To prove $
(0_2,
1\oplus 0)\to  
(1\oplus 0,
a \oplus 0)
$ for $a> 0$
we take 
$B(s)=\frac{1}{s^{2}}\oplus 0$, 
$P(\epsilon)=sI_2$, $c(s)=1$ in (\ref{cPepsi}). From what we proved so far this implies $
(0_2,
1\oplus 0)\to  
(A,
B)
$ for all $B\neq 0_2$.

\end{enumerate}

\end{enumerate}


This completes the proof of the theorem.
\end{proof}

\vspace{-2mm}

\section*{Data Availability}
Data sharing not applicable to this article as no datasets were generated or analysed during the current study.

\vspace{-2mm}

\section*{Funding}
The research was supported by Slovenian Research Agency (grant no. P1-0291 and no. J1-3005).


\begin{thebibliography}{70}

\baselineskip=7pt

\bibitem{Arnold}
V.I. Arnold,
Geometric Methods in the Theory or Ordinary Differential Equations,
New York (NY): Springer-Verlag; 1988.



\bibitem{BK}
E. Bedford, W. Klingenberg, 
On the envelope of holomorphy of a 2-sphere in $\mathbb{C}^2$,
J. Amer. Math. Soc. 4 (1991), no. 3, 623--646. 


\bibitem{Bishop} E. Bishop,
Differential manifolds in complex Euclidean space,
Duke Math. J. 32 (1965), 1-22.

\bibitem{RAG}
J. Bochnak, M. Coste, M. F. Roy,
Real Algebraic Geometry.
Berlin: Springer-Verlag; 1988.

\bibitem{Boot} 
W. M. Boothby,
An Introduction to Differentiable Manifolds and Riemannian Geometry.
New York (NY): Academic Press; 1975.

\bibitem{Bur}
V. Burcea, 
A normal form for a real 2-codimensional submanifold in
$\mathbb{C}^{N+1}$ near a CR singularity, 
Adv. Math. 243 (2013), 262--295. 


\bibitem{Coff} 
A. Coffman,
CR singularities of real fourfolds in $\mathbb{C}^3$,
Illinois J. Math. 53 (2009), no. 3,  939--981.





\bibitem{FKS1} 
A. R. Dmytryshyn, V. Futorny, B. K\aa gstr\"{o}m, L. Klimenko, V. V. Sergeichuk, 
Change of the congruence canonical form of 2-by-2 and 3-by-3 matrices under perturbations and bundles of matrices under congruence,
Linear Algebra Appl. 469 (2015), 305--334. 





\bibitem{DTZ1}
P. Dolbeault, G. Tomassini, D. Zaitsev, 
On boundaries of Levi-flat hypersurfaces in $\mathbb{C}^n$,
C. R. Math. Acad. Sci. Paris 341 (2005), no. 6, 343--348.


\bibitem{EEK}
E. Edelman, E. Elmroth, B. K\aa gstr\"{o}m, 
A geometric approach to perturbation theory of matrices and matrix pencils. Part II: A stratification enhanced
staircase algorithm,
SIAM J. Matrix Anal. Appl. 20 (1999), 667--669.

\bibitem{EJK}
E. Elmroth, P. Johansson, B. K\aa gstr\"{o}m, Computation and presentation
of graph displaying closure hierarchies of Jordan and Kronecker structures,
Numer. Linear Algebra Appl. 8 (2001), 381--399.



\bibitem{FH}
H. Fang, X. Huang, 
Flattening a non-degenerate CR singular point of
real codimension two,
Geom. Funct. Anal. 28 (2018),  289-333 .

\bibitem{F}
F. Forstneri\v c.
Complex tangents of real surfaces in complex surfaces,
Duke Math. J. 67 (1992), no. (2), 353--367.

\bibitem{FKS2} 
V. Futorny, L. Klimenko, V. V. Sergeichuk,
Change of the $*$-congruence canonical form of 2-by-2 matrices under perturbations,
Electron. J. Linear Algebra 27 (2014), 146--154.





\bibitem{GongStolo1}
X. Gong, L. Stolovitch,
Real submanifolds of maximum complex tangent space at a CR singular point I,
Invent Math. 206 (2016), no. 1, 293--377.  




















\bibitem{KW}
C. E. Kenig, S. M. Webster,
The local hull of holomorphy of a surface in the space of two complex variables,
Invent. Math. 67 (1982), no. 1, 1--21.








\bibitem{MW}
J. K. Moser, S. M. Webster, 
Normal forms for real surfaces in $\mathbb{C}^2$ near complex tangents and hyperbolic surface transformations,
Acta Math. 150 (1983), no. 3-4, 255--296. 




\bibitem{S2}
M. Slapar.
Modeling complex points up to isotopy,
J. Geom. Anal. 23 (2013), 1932--1943.

\bibitem{S3}
M. Slapar,
On complex points of codimension 2 submanifolds,
J. Geom. Anal. 26 (2016), no. 1, 206--219.


\bibitem{ST}
M. Slapar, T. Star\v{c}i\v{c}, 
On normal forms of complex points of codimension 2 submanifolds,
J. Math. Anal. Appl. 461 (2018), no. 2, 1308--1326. 


\bibitem{TS2}
T. Star\v{c}i\v{c}, 
Change of normal forms of complex points of real $4$-manifolds embedded in a complex $3$-manifold under small $\mathcal{C}^{2}$-perturbations, 
to appear in Complex Var. Elliptic Equ.



\end{thebibliography}
\end{document}